\newcommand{\paren}[1]{\ensuremath{\left( #1 \right)}}
\newcommand{\set}[1]{\ensuremath{\left\{ #1 \right\}}}
\newcommand{\cyclic}[1]{\ensuremath{\left< #1 \right>}}
\newcommand{\braces}[1]{\ensuremath{\left[ #1 \right]}}
\newcommand{\norm}[1]{\ensuremath{\left\| #1 \right\|}}
\newcommand{\abs}[1]{\ensuremath{\left| #1 \right|}}
\newcommand{\Matrix}[1]{\begin{pmatrix}#1\end{pmatrix}}
\newcommand{\SmallMatrix}[1]{\left(\begin{smallmatrix}#1\end{smallmatrix}\right)}
\DeclareMathOperator*{\res}{res}
\newcommand{\BetaFun}[5]{B\paren{\begin{matrix}\begin{matrix}#1,&#2\end{matrix}\\#3\end{matrix} \, ;\, \begin{matrix}#4\\#5\end{matrix}}}
\renewcommand{\Re}{{\mathop{\mathgroup\symoperators Re}}}
\renewcommand{\Im}{{\mathop{\mathgroup\symoperators Im}}}
\newcommand{\sgn}{{\mathop{\mathgroup\symoperators \,sign}}}
\newcommand{\Max}[1]{\ensuremath{\max \set{#1}}}
\newcommand{\Min}[1]{\ensuremath{\min \set{#1}}}
\newcommand{\BigO}[1]{O\paren{#1}}
\newcommand{\Z}{\mathbb{Z}}
\newcommand{\R}{\mathbb{R}}
\newcommand{\N}{\mathbb{N}}
\newcommand{\C}{\mathbb{C}}
\newcommand{\wbar}[1]{\overline{#1}}
\newcommand{\wtilde}[1]{\widetilde{#1}}
\newcommand{\e}[1]{e\paren{#1}}
\newcommand{\pFqName}[2]{{_{#1}F_{#2}}}
\newcommand{\pFq}[5]{\pFqName{#1}{#2}\paren{\begin{array}{c}#3;\\#4;\end{array}#5}}
\newcommand{\trans}[1]{{^t #1}}
\newcommand{\pderv}[3][]{{\frac{\partial^{#1}#2}{\partial{#3}^{#1}}}}
\theoremstyle{plain} 
\newtheorem{thm}{Theorem}
\newtheorem{cor}[thm]{Corollary}
\newtheorem{lem}[thm]{Lemma}
\newtheorem{prop}[thm]{Proposition}
\newtheorem{conj}[thm]{Conjecture}
\theoremstyle{definition}
\theoremstyle{remark}
\newif\ifshowTODOs
\newcommand{\thmref}[1]{Theorem~\ref{#1}}
\newcommand{\lemref}[1]{Lemma~\ref{#1}}
\newcommand{\propref}[1]{Proposition~\ref{#1}}
\newcommand{\tableref}[1]{Table~\ref{#1}}
\newcommand{\mathdash}{\mbox{--}}
\begin{document}

\title{On sums of $SL(3,\Z)$ Kloosterman sums.}

\author{Jack Buttcane}
\date{\today}

\thanks{Email address: buttcane@uni-math.gwdg.de}
\thanks{During the time of this research, the author was supported by National Science Foundation DMS-10-01527 and VIGRE grants.}

\begin{abstract}
We show that sums of the $SL(3,\Z)$ long element Kloosterman sum against a smooth weight function have cancellation due to the variation in argument of the Kloosterman sums, when each modulus is at least the square root of the other.
Our main tool is Li's generalization of the Kuznetsov formula on $SL(3,\R)$, which has to date been prohibitively difficult to apply.
We first obtain analytic expressions for the weight functions on the Kloosterman sum side by converting them to Mellin-Barnes integral form.
This allows us to relax the conditions on the test function and to produce a partial inversion formula suitable for studying sums of the long-element $SL(3,\Z)$ Kloosterman sums.
\keywords{analytic number theory \and harmonic analysis on symmetric spaces \and automorphic forms \and Kloosterman sums \and GL(3)}
\end{abstract}

\maketitle

\section{Introduction}
\label{sec:Intro}
The classical Kloosterman sums originated in 1926 in the context of applying the circle method to counting representations of integers by the four-term quadratic form $ax^2+by^2+cz^2+dt^2$ \cite{Kl01}; they are defined by
\[ S(a,b,c) = \sum_{\substack{x \pmod{c} \\ (x,c) = 1 \\ x \bar{x} \equiv 1 \pmod{c}}} \e{\frac{ax+b\bar{x}}{c}}, \qquad \e{x} = e^{2\pi i x}, \]
and they enjoy a multiplicativity relation:
If $(c,c')=1$, then 
\[ S(a,b,cc') = S(\wbar{c'}a,\wbar{c'}b,c) S(\wbar{c}a,\wbar{c}b,c'), \]
where $c'\wbar{c'}\equiv 1 \pmod{c}$, $c\wbar{c}\equiv 1 \pmod{c'}$.
In 1927, Kloosterman \cite{Kl02} used these sums to estimate Fourier coefficients of modular forms, as did Rademacher in 1937 \cite{Rad01}.
Optimal estimates for individual Kloosterman sums were obtained in 1948 by Andr\'e Weil \cite{W01}: $\abs{S(a,b,c)} \le d(c) \sqrt{(a,b,c)} \sqrt{c}$, where $d(c)$ is the number of positive divisors of $c$ and $(a,b,c)$ is the greatest common divisor.
In 1963, Linnik published a paper outlining methods for problems in additive number theory \cite{L01} in which he noted the importance of sums of Kloosterman sums and made the conjecture that such sums should have good cancellation between terms:
\begin{conj}[Linnik]
	Let $N$ be large and $C > N^{\frac{1}{2}-\epsilon}$, then
	\[ \sum_{c\le C} S(1,N,c) \ll C^{1+\epsilon}. \]
\end{conj}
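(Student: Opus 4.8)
The natural line of attack is through the classical Kuznetsov trace formula for $SL(2,\Z)$ --- the $GL(2)$ ancestor of Li's $SL(3)$ formula --- which converts smoothly weighted sums of the classical Kloosterman sums $S(1,N,c)$ into spectral data attached to Maass cusp forms $u_j$ (with spectral parameter $t_j$, Fourier coefficients $\rho_j$, Hecke eigenvalues $\lambda_j$), to holomorphic cusp forms, and to Eisenstein series for the modular group. One should note at the outset that the conjectured bound $C^{1+\epsilon}$ is exactly square-root cancellation relative to the trivial bound $\sum_{c\le C}\abs{S(1,N,c)}\ll_\epsilon C^{3/2+\epsilon}$ supplied by Weil's estimate; no amount of pointwise control of the individual sums can yield it, and the mechanism must be the oscillation of the arguments $(x+N\wbar x)/c$ as $c$ ranges over $[1,C]$, which is precisely what the trace formula captures.

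\emph{Step 1: reduction to a spectral estimate.} Attach a smooth weight and decompose dyadically: it suffices to bound $\mathcal{S}(X):=\sum_{c\ge 1}S(1,N,c)\,\phi(c/X)$ with $\phi\in C_c^\infty([1,2])$ by $\ll_\epsilon X^{1+\epsilon}N^\epsilon$ for each dyadic $X\le C$, since summing the $\BigO{\log C}$ pieces and absorbing $N^\epsilon$ into $C^\epsilon$ --- legitimate because $C>N^{\frac12-\epsilon}$ --- recovers the assertion. Writing $\phi(c/X)=(c/X)\psi(c/X)$ with $\psi(u)=u\phi(u)$ and feeding $\sum_c c^{-1}S(1,N,c)\Psi(4\pi\sqrt N/c)$, with $\Psi(y)=\psi(4\pi\sqrt N/(Xy))$ a bump of height $\asymp X$ localized at $y\asymp\sqrt N/X$, into Kuznetsov gives
\[
\mathcal{S}(X)=X\paren{\sum_j \frac{\overline{\rho_j(1)}\rho_j(N)}{\cosh\pi t_j}\,\check\Psi(t_j)+\text{(holomorphic)}+\text{(Eisenstein)}},
\]
where $\check\Psi$ is the Bessel transform of $\Psi$; the standard analysis of this transform shows $\check\Psi$ is essentially supported on $\abs{t_j}\lesssim T:=\max(1,\sqrt N/X)$, with at most polynomial weight there. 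The hypothesis $C>N^{\frac12-\epsilon}$ guarantees that the top ranges $X\asymp C$ are benign in the sense that $T\ll N^\epsilon$.

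\emph{Step 2: the easy spectral pieces.} The Eisenstein contribution is a short integral whose integrand is controlled by $\sigma_{2it}(1)\sigma_{2it}(N)N^{it}\abs{\zeta(1+2it)}^{-2}$; the divisor bound $\sigma_{2it}(N)\ll N^\epsilon$ and de la Vall\'ee Poussin lower bounds for $\zeta$ on $\Re s=1$ bound it by $N^\epsilon$. The holomorphic term is a sum over weights $k\le 2T+\BigO{1}$ of $\sum_f \psi_f(1)\overline{\psi_f(N)}$, which by Deligne's bound $\abs{\lambda_f(N)}\ll N^\epsilon$ and the Hecke large sieve is $\ll(TN)^\epsilon T^2$. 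For the top ranges ($T\ll N^\epsilon$) both are $\ll N^\epsilon$, and the cuspidal Maass term there is a sum over the $\BigO{N^\epsilon}$ forms with $\abs{t_j}\ll N^\epsilon$; since $\abs{\rho_j(1)}^2/\cosh\pi t_j\ll(1+\abs{t_j})^\epsilon$ from the lower bound for $L(1,\mathrm{sym}^2 u_j)$, the bound $\sum_j\ll N^\epsilon$ follows from $\abs{\lambda_j(N)}\ll N^\epsilon$. So on these ranges $\mathcal{S}(X)\ll X^{1+\epsilon}N^\epsilon$.

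\emph{Step 3 (the main obstacle).} Two things remain, the second of which is where the argument stalls --- which is why the conjecture is still open. First, on the lower dyadic ranges $X\ll\sqrt N$ the parameter $T=\sqrt N/X$ is large, and the cuspidal Maass term must be treated by Cauchy--Schwarz in $j$ followed by the spectral large sieve of Deshouillers--Iwaniec, $\sum_{\abs{t_j}\le T}\abs{\rho_j(n)}^2/\cosh\pi t_j\ll_\epsilon(Tn)^\epsilon(T^2+n)$. The ``$+\,n$'' here is the diagonal of the large sieve; it encodes precisely our inability to beat the convexity-type bound $\abs{\rho_j(N)}\ll N^{1/2+\epsilon}$ for a single form, and with it the method delivers only the weaker Kuznetsov--Deshouillers--Iwaniec exponents (of the shape $C^{7/6+\epsilon}$ for $\sum_{c\le C}S(1,N,c)$), while even inserting the Kim--Sarnak bound $\abs{\lambda_j(N)}\ll N^{7/64+\epsilon}$ falls short of $C^{1+\epsilon}$. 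To obtain Linnik's conjecture in full one needs, in place of the large sieve, the Ramanujan--Petersson conjecture for $GL(2)$ Maass forms --- $\abs{\lambda_j(N)}\ll_\epsilon N^\epsilon$, equivalently a spectral large sieve with the diagonal $n$ replaced by $n^{o(1)}$ --- together with enough uniformity in the spectral aspect to sum over the $\asymp T^2$ forms below the conductor, at which point the argument of Step~2 goes through on every dyadic range. This Ramanujan input, and the accompanying spectral-aspect uniformity, is the crux; it is exactly the ingredient the present circle of methods cannot provide, so any genuine proof of the conjecture as stated must either establish it or discover a substitute.
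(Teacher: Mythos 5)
The statement you were asked to ``prove'' is Linnik's 1963 \emph{conjecture}, which the paper presents as an open problem rather than a theorem; the paper offers no proof of it (and none exists in the literature). What the paper does provide in its vicinity is Kuznetsov's partial result (\thmref{thm:KuznetsovsBound}), the bound $\sum_{c\le T} c^{-1}S(n,m,c)\ll_{n,m} T^{1/6}(\ln T)^{1/3}$, which already exhibits cancellation beyond Weil but falls well short of the conjectured strength. Your write-up is therefore not a proof and — to your credit — does not pretend to be one: it is a correct survey of the standard Kuznetsov-formula attack (dyadic decomposition, Bessel transform localization at $\abs{t_j}\lesssim\sqrt{N}/X$, easy Eisenstein/holomorphic pieces, Cauchy--Schwarz plus the Deshouillers--Iwaniec spectral large sieve on the cuspidal piece) together with a precise identification of the obstruction, namely the diagonal term $+n$ in the large sieve, equivalently the lack of a Ramanujan-quality bound $\abs{\lambda_j(N)}\ll N^\epsilon$ uniformly in $j$.

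Since there is no proof in the paper to compare against, the only substantive feedback is on accuracy, and here you are essentially right. Two small calibration points worth noting: (i) Kuznetsov's actual unconditional exponent for the unweighted sum $\sum_{c\le C}S(1,N,c)$ is $C^{7/6+\epsilon}$ (consistent with what you wrote), and even the Kim--Sarnak bound $7/64$ on the Ramanujan exponent does not close the gap, as you say; (ii) for the weighted version $\sum_{c\le C} c^{-1}S(1,N,c)$ that appears in \thmref{thm:KuznetsovsBound}, the same machinery does better because the extra $c^{-1}$ smooths the transform, but that is a different (and weaker) statement than Linnik's. If you want to turn this into useful content for the paper, the honest framing is exactly the one you give in Step~3: the conjecture is equivalent, up to $\epsilon$'s and standard technique, to a Ramanujan-on-average input that the present circle of methods cannot supply.
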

One should compare this to Weil's estimate which gives $C^{\frac{3}{2}+\epsilon}$.

On a parallel track, between 1932 and 1940, Petersson \cite{Pe01}, Rankin \cite{Ran01} and Selberg \cite{Se01} connected Fourier coefficients of modular forms to sums of Kloosterman sums by studying Poincar\'e series.
This led to Kuznetsov's trace formulas which relate sums of Kloosterman sums to sums of Fourier coefficients of $SL(2,\Z)$ automorphic forms, and using these formulas in 1980, Kuznetsov \cite{Kuz01} was able to make progress towards Linnik's conjecture:
\begin{thm}[Kuznetsov]
\label{thm:KuznetsovsBound}
	\[ \sum_{c \le T} \frac{1}{c} S(n,m,c) \ll_{n,m} T^{\frac{1}{6}} \paren{\ln T}^{\frac{1}{3}}. \]
\end{thm}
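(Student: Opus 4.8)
The plan is to deduce the bound from Kuznetsov's trace formula for $SL(2,\Z)$, run in the direction that takes a smoothly weighted sum of the $S(n,m,c)$ as \emph{input} and returns spectral data, and then to estimate that spectral data; I take $n,m\ge 1$, the general case being analogous. A sharp cutoff $c\le T$ does not produce an absolutely convergent spectral side, so I would first smooth it: fix a parameter $\Delta$ with $\sqrt T\le\Delta\le T$ and a fixed smooth function $g$ equal to $1$ on $(-\infty,1-\Delta/T]$ and to $0$ on $[1,\infty)$, and write
\[ \sum_{c\le T}\frac{S(n,m,c)}{c}=\sum_{c\ge 1}\frac{S(n,m,c)}{c}\,g\!\paren{\tfrac cT}+\sum_{T-\Delta<c\le T}\frac{S(n,m,c)}{c}\paren{1-g\!\paren{\tfrac cT}}. \]
Weil's bound $\abs{S(n,m,c)}\le d(c)\sqrt{(n,m,c)}\sqrt c$ together with the classical $\sum_{T-\Delta<c\le T}d(c)\ll\Delta\log T$ (valid once $\Delta\ge\sqrt T$) shows the second sum is $\BigO{\Delta T^{-1/2}\log T}$, the implied constant depending on $n$ and $m$; it remains to treat the smooth sum.

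Set $\phi(x):=g\!\paren{4\pi\sqrt{nm}/(Tx)}\,w(x)$, where $w$ is a fixed smooth function equal to $1$ on $(0,4\pi\sqrt{nm}]$ and to $0$ on $[8\pi\sqrt{nm},\infty)$; since $4\pi\sqrt{nm}/c\le 4\pi\sqrt{nm}$ whenever $c\ge 1$ the factor $w$ changes nothing, so $\sum_c\frac{S(n,m,c)}{c}g(c/T)=\sum_c\frac{S(n,m,c)}{c}\,\phi\!\paren{4\pi\sqrt{nm}/c}$ while now $\phi$ is smooth and compactly supported in $(0,\infty)$, so Kuznetsov's formula applies. It expresses this sum as a cuspidal term $\sum_j\frac{\rho_j(n)\wbar{\rho_j(m)}}{\cosh\pi t_j}\,\wtilde\phi(t_j)$, an Eisenstein term whose kernel is $\BigO{\paren{\log\paren{2+\abs t}}^2}\wtilde\phi(t)$ (using $\abs{\sigma_{2it}(\ell)}\le d(\ell)$ and the classical $\abs{\zeta(1+2it)}\gg 1/\log\paren{2+\abs t}$), and a diagonal term present only when $n=m$, where
\[ \wtilde\phi(t)=\frac{\pi i}{2\sinh\pi t}\int_0^\infty\paren{J_{2it}(x)-J_{-2it}(x)}\,\phi(x)\,\frac{dx}{x}. \]

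The crux is a uniform estimate for $\wtilde\phi(t)$. Because $n$ and $m$ are fixed, $\phi$ is supported where $x\ll_{n,m}1$, so for $\abs t\gg_{n,m}1$ one has $x\ll\sqrt{\abs t}$ throughout the support of $\phi$ and the power series for $J_{\pm 2it}$ is governed by its leading term, $J_{2it}(x)=\dfrac{(x/2)^{2it}}{\Gamma(1+2it)}\paren{1+\BigO{x^2/\abs t}}$, the correction being a smooth function of $x$ whose logarithmic derivatives are all $\BigO{x^2/\abs t}$. Using the exact identity $\abs{\Gamma(1+2it)\sinh\pi t}=\sqrt{\pi t\tanh\pi t}\asymp\abs t^{1/2}$, this reduces $\wtilde\phi(t)$, up to a manageable error, to $\abs t^{-1/2}$ times the Mellin integrals $\int\phi(x)(x/2)^{\pm 2it}\,\frac{dx}{x}$. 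Writing $x=e^u$ and integrating by parts in $u$, exploiting that $\phi(e^u)$ is constant on the long plateau of its support, that its rising edge (from the cutoff near $c=T$) has width $\asymp\Delta/T$, and that its falling edge (from $w$) is fixed, one finds $\int\phi(x)(x/2)^{2it}\,\frac{dx}{x}\ll_{n,m,M}\abs t^{-1}\paren{1+\abs t\,\Delta/T}^{-M}$ for every $M$, so the main part of $\wtilde\phi(t)$ is $\ll_{n,m,M}\abs t^{-3/2}\paren{1+\abs t\,\Delta/T}^{-M}$ for $\abs t\ge C_{n,m}$; the remaining terms are smaller, $\ll_{n,m}\abs t^{-5/2}\log T$ in that range, while for the $\ll_{n,m}1$ eigenvalues with $\abs{t_j}\le C_{n,m}$ the crude bound $\wtilde\phi(t_j)\ll_{n,m}(\log T)^2$ (from $Y_0(x)\ll\abs{\log x}$) suffices. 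Carrying this special-function analysis out uniformly in $t$ --- in particular threading the error terms through so that the final power of $\log T$ is not inflated --- is the step I expect to be the main obstacle; it is exactly the sort of explicit, hands-on estimate that the introduction flags as the chief difficulty in applying formulas of this type, here in its classical $GL(2)$ guise.

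Granting these bounds, the remainder is routine. For the cuspidal term, Cauchy--Schwarz and the spectral large sieve $\sum_{\abs{t_j}\le U}\abs{\rho_j(\ell)}^2/\cosh\pi t_j\ll_\ell U^2$ --- itself a standard consequence of running Kuznetsov's formula in the opposite direction, the diagonal main term there being $\asymp U^2$ and the off-diagonal Kloosterman part lower order by Weil --- give $\sum_{U\le\abs{t_j}<2U}\abs{\rho_j(n)\rho_j(m)}/\cosh\pi t_j\ll_{n,m}U^2$ for each dyadic $U$, so a dyadic decomposition bounds the cuspidal contribution by $\ll_{n,m}(\log T)^2+\sum_{U\text{ dyadic}}U^{1/2}\paren{1+U\Delta/T}^{-M}\ll_{n,m}\paren{T/\Delta}^{1/2}+(\log T)^2$. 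The Eisenstein term is $\ll_{n,m}\int_{\abs t\ge 1}\abs t^{-3/2}\paren{\log\paren{2+\abs t}}^2\,dt+\BigO{(\log T)^2}\ll_{n,m}(\log T)^2$, and the diagonal term, when present, is $\ll_{n,m}\int\abs{\wtilde\phi(t)}\abs t\,dt\ll_{n,m}\paren{T/\Delta}^{1/2}+(\log T)^2$. Collecting,
\[ \sum_{c\le T}\frac{S(n,m,c)}{c}\ll_{n,m}\paren{T/\Delta}^{1/2}+\Delta T^{-1/2}\log T+(\log T)^2, \]
and the choice $\Delta=T^{2/3}\paren{\log T}^{-2/3}$ --- which balances the first two terms and makes the third negligible --- yields $\ll_{n,m}T^{1/6}\paren{\log T}^{1/3}$, as asserted.
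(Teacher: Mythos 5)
The paper does not prove this; it cites Kuznetsov's 1980 result as the prototype for \thmref{thm:SumsOfKloostermanSums}, so there is no in-paper argument to compare against. Your reconstruction is correct, and it is the standard proof (essentially Kuznetsov's own, and the one found in Iwaniec's \emph{Spectral Methods}): smooth the cutoff at scale $\Delta\in[\sqrt T,T]$, apply the Kloosterman-to-spectral form of the trace formula, prove $\wtilde\phi(t)\ll\abs{t}^{-3/2}\paren{1+\abs{t}\Delta/T}^{-M}$ via the leading term of the Bessel power series together with $\abs{\Gamma(1+2it)\sinh\pi t}=\sqrt{\pi t\tanh\pi t}$, control the cuspidal sum by Cauchy--Schwarz and the spectral large sieve over dyadic blocks, treat Eisenstein and diagonal contributions directly, and balance $\Delta T^{-1/2}\log T$ against $\paren{T/\Delta}^{1/2}$ by taking $\Delta=T^{2/3}(\log T)^{-2/3}$. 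The exponent and logarithm bookkeeping check out, the $\BigO{(\log T)^2}$ leftovers are indeed negligible, and you correctly flag the uniform-in-$t$ treatment of the Bessel transform as the one genuinely technical step.

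It is worth remarking that your proof is precisely the $SL(2)$ shadow of what the paper does on $SL(3)$: \thmref{thm:SumsOfKloostermanSums} runs the same scheme with Li's Kuznetsov formula, with the mean-value estimates of \thmref{thm:MeanValueEstimates} in place of the spectral large sieve, and with the error terms $X^{1/2}$, $Y^{1/2}$ playing the role of your smoothing loss $\Delta T^{-1/2}\log T$ --- there coming from the incompleteness of the partial inversion rather than from an explicit mollifier, but serving the same structural function.
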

As Weil's estimate here gives $T^{\frac{1}{2}+\epsilon}$, we must be seeing cancellation between terms as Linnik predicted.

This second track has been quite fruitful for the followers of Iwaniec -- sums of arithmetic functions, usually related to quadratic forms in some sense, can sometimes be decomposed into sums of Kloosterman sums, e.g. \cite{DI03}, and similarly, exponential sums related to quadratic forms can often be decomposed into Poincar\'e series, e.g. \cite{DFI02}.
The Kuznetsov trace formulas then play the role of Poisson summation, allowing one to substitute a sum of Fourier coefficients of automorphic forms for a sum of Kloosterman sums and visa versa.
Iwaniec in particular has made good use of a sort of double application of Kuznetsov's formulas; using positivity to study averages of Fourier coefficients of automorphic forms via the Kuznetsov formula and then applying these estimates to sums of Kloosterman sums via the second form of the Kuznetsov formula, e.g. \cite{DI03}.

Finally, we note that the Fourier coefficients of automorphic forms which are also eigenfunctions of the Hecke operators give rise to $L$-functions.
By applying the Kuznetsov formulas in this situation we may obtain results on averages of $L$-functions and all of the problems to which such things apply, e.g. \cite{DFI03}.

Now having noted the strong connection between analysis on $SL(2,\R)$ and quadratic forms, it is hoped that analysis on $SL(3,\R)$ will play a similar role in the study of cubic forms, and the analysis of Hecke operators on $SL(3,\R)$ automorphic forms is also known to give rise to $L$-functions.
A paper of Jacquet, Piatetski-Shapiro and Shilika \cite{JPS01} and a book of Bump \cite{B01} (which is essentially his dissertation), form the foundations of the $L$-function approach; and a paper of Bump, Friedberg and Goldfeld \cite{BFG01} initiates the study of Poincar\'e series and Kloosterman sums on $SL(3,\Z)$.

The paper \cite{BFG01} notes that the Fourier coefficients of Poincar\'e series are given by sums of two new types of exponential sums in addition to the classical sums of Kloosterman himself; we will primarily be concerned with the long-element sum which we denote $S_{w_l}(\psi_m,\psi_n,c)$, for reasons which will be made clear later, and is given by the sum
\begin{align*}
	& S_{w_l}(\psi_{m_1,m_2}, \psi_{n_1,n_2}, (A_1, A_2)) =\\
	& \mathop{{\sum}^*}_{\substack{B_1, C_1 \pmod{A_1}\\B_2, C_2\pmod{A_2}}} \e{m_2 \frac{Z_2 B_1-Y_2 A_1}{A_2}+m_1 \frac{Y_1 A_2-Z_1 B_2}{A_1}+n_2\frac{B_1}{A_1}+n_1\frac{-B_2}{A_2}},
\end{align*}
here the sum $\sum^*$ is restricted to those quadruples of $B_1,C_1,B_2,C_2$ satisfying
\[ (A_1, B_1, C_1) = (A_2, B_2, C_2) = 1, \quad A_1 C_2 + B_1 B_2 + C_1 A_2 \equiv 0 \pmod{A_1 A_2}, \]
and the numbers $Y_1, Z_1, Y_2, Z_2$ are defined by
\[ Y_1 B_1 + Z_1 C_1 \equiv 1 \pmod{A_1}, \qquad Y_2 B_2+Z_2 C_2 \equiv 1 \pmod{A_2}. \]

In \cite{BFG01}, the authors list a number of basic properties of this new Kloosterman sum, which generally relate to its well-definedness and interchanging indices of characters or moduli, but the most important is a type of multiplicativity:
\begin{lem}[BFG]
	If $(c_1c_2,c_1'c_2')=1$ and
	\[ \wbar{c_1}c_1 \equiv \wbar{c_2}c_2 \equiv 1 \pmod{c_1' c_2'}, \qquad \wbar{c_1'}c_1' \equiv \wbar{c_2'}c_2' \equiv 1 \pmod{c_1 c_2}, \]
	then
	\[ S_{w_l}(\psi_m,\psi_n,(c_1c_1',c_2c_2')) = S_{w_l}(\psi_{m'},\psi_n,(c_1,c_2)) S_{w_l}(\psi_{m''},\psi_n,(c_1',c_2')), \]
	where $m' = \paren{\wbar{c_1'}^2 c_2' m_1, c_1' \wbar{c_2'}^2 m_2}$, and $m'' = \paren{\wbar{c_1}^2 c_2 m_1, c_1 \wbar{c_2}^2 m_2}$.
\end{lem}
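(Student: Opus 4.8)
The plan is to obtain the factorization from the Chinese Remainder Theorem applied to the summation variables, followed, in each of the two resulting factors, by a rescaling of the $B$'s and $C$'s that absorbs the spurious twists CRT introduces into the character $\psi_n$. Write $A_1 = c_1 c_1'$, $A_2 = c_2 c_2'$; the hypothesis $(c_1 c_2, c_1' c_2') = 1$ forces $(c_1, c_1') = (c_2, c_2') = 1$ and exhibits $A_1 A_2 = (c_1 c_2)(c_1' c_2')$ as a product of two coprime factors. By CRT a residue $B_1 \bmod A_1$ corresponds to a pair $(\beta_1 \bmod c_1,\ \beta_1' \bmod c_1')$, and likewise for $C_1, B_2, C_2$, so the defining fourfold sum becomes an eightfold sum over the unprimed and primed coordinates. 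First I check that the restriction $\mathop{{\sum}^*}$ respects this decomposition: $(A_1, B_1, C_1) = 1 \iff (c_1, \beta_1, \gamma_1) = (c_1', \beta_1', \gamma_1') = 1$ (and similarly for the $A_2$-triple), while the bilinear congruence $A_1 C_2 + B_1 B_2 + C_1 A_2 \equiv 0 \pmod{A_1 A_2}$, being a congruence modulo the coprime product $(c_1 c_2)(c_1' c_2')$, is equivalent to one congruence of the same shape modulo $c_1 c_2$ and one modulo $c_1' c_2'$. The auxiliary congruences $Y_i B_i + Z_i C_i \equiv 1 \pmod{A_i}$ split componentwise, and since by \cite{BFG01} the summand depends only on $Y_i B_i + Z_i C_i \bmod A_i$ and the expressions inside $\e{\cdot}$ are well defined, I may work throughout with CRT-compatible representatives.

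Next I expand the exponent using $\tfrac1{A_1} \equiv \tfrac{\wbar{c_1'}}{c_1} + \tfrac{\wbar{c_1}}{c_1'}$, the analogue for $A_2$, and $\tfrac1{A_1 A_2} \equiv \tfrac{\wbar{c_1' c_2'}}{c_1 c_2} + \tfrac{\wbar{c_1 c_2}}{c_1' c_2'}$ (all modulo $1$); each of the four terms then separates into a part in the unprimed variables only and a part in the primed variables only, so the eightfold sum factors as a product of a sum over $(\beta_1, \gamma_1, \beta_2, \gamma_2)$ and one over $(\beta_1', \gamma_1', \beta_2', \gamma_2')$. In the unprimed factor the character $\psi_n$ appears twisted --- $n_2$ by $\wbar{c_1'}$ and $n_1$ by $\wbar{c_2'}$ --- and I remove these twists by the bijective substitutions $\beta_1 \mapsto c_1'\beta_1$, $\gamma_1 \mapsto c_1'\gamma_1$, $\beta_2 \mapsto c_2'\beta_2$, $\gamma_2 \mapsto c_2'\gamma_2$ (legitimate since $c_1', c_2'$ are units modulo $c_1, c_2$, and compatible with the gcd and bilinear conditions up to a further, invisible, rescaling of $\gamma_1, \gamma_2$, which do not appear in the exponent); these substitutions force, via the auxiliary congruences, $Y_1, Z_1 \mapsto \wbar{c_1'} Y_1, \wbar{c_1'} Z_1$ and $Y_2, Z_2 \mapsto \wbar{c_2'} Y_2, \wbar{c_2'} Z_2$. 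Tallying the inverse factors then inherited by the $m_1$- and $m_2$-terms identifies the unprimed factor with $S_{w_l}(\psi_{m'}, \psi_n, (c_1, c_2))$, $m' = \paren{\wbar{c_1'}^2 c_2' m_1,\ c_1' \wbar{c_2'}^2 m_2}$; the primed factor is the same computation with $c_i \leftrightarrow c_i'$ and yields $S_{w_l}(\psi_{m''}, \psi_n, (c_1', c_2'))$ with $m'' = \paren{\wbar{c_1}^2 c_2 m_1,\ c_1 \wbar{c_2}^2 m_2}$, and multiplying the two factors proves the lemma.

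I expect the only real obstacle to be the bookkeeping in that last step: keeping straight which of $\wbar{c_1}, \wbar{c_1'}, \wbar{c_2}, \wbar{c_2'}$ attaches to which of $m_1, m_2, n_1, n_2$ after the exponent is split, and then tracking how the substitutions $B_i \mapsto c_i' B_i$, $C_i \mapsto c_i' C_i$ --- which are dictated by the demand that $\psi_n$ stay untwisted --- act on the $m$-terms both directly, through the $A_1$ (resp.\ $A_2$) surviving in the numerator, and indirectly, through the induced $Y_i, Z_i \mapsto \wbar{c_i'} Y_i, \wbar{c_i'} Z_i$. It is the superposition of these two effects that produces the \emph{squared} factor $\wbar{c_1'}^2$, resp.\ $\wbar{c_2'}^2$, in $m'$, and confirming that the residual powers of $c_2'$ and $c_1'$ land on the correct index is what needs care; the invariance of the final formula under $m_1 \leftrightarrow m_2$ combined with $(c_1, c_1') \leftrightarrow (c_2, c_2')$ gives a convenient check that this has been done right.
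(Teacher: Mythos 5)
The paper does not prove this lemma --- it is quoted directly from \cite{BFG01} as a known result, so there is no internal proof against which to compare your proposal. Your overall strategy (CRT on the Pl\"ucker coordinates, splitting the exponent via $\tfrac{1}{A_1}\equiv\tfrac{\wbar{c_1'}}{c_1}+\tfrac{\wbar{c_1}}{c_1'}$, and absorbing the resulting twists in $\psi_n$ by a unit rescaling) is the correct line of attack and is in the spirit of the BFG argument, but two steps are genuinely incomplete or wrong as written.

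First, the bilinear Pl\"ucker condition does not split under CRT as painlessly as you claim. Modulo $c_1 c_2$ the term $B_1 B_2$ requires $B_1\bmod c_1c_2$ and $B_2\bmod c_1c_2$, but $B_1$ is given only modulo $A_1 = c_1c_1'$ and $B_2$ only modulo $A_2 = c_2c_2'$, and the hypothesis $(c_1c_2,c_1'c_2')=1$ gives no control over $(c_1,c_2)$; so $B_1\bmod c_2$ and $B_2\bmod c_1$ are not functions of the CRT components $(\beta_1,\beta_1')$, $(\beta_2,\beta_2')$ alone. The same cross-modulus ambiguity appears in the exponent, where $\tfrac{Z_2 B_1 - Y_2 A_1}{A_2}$ requires $B_1\bmod A_2$. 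The fix is to work with actual integer Pl\"ucker coordinates (for which the relation is an exact equality), choose a coherent system of coset representatives, and show the bijection of double cosets --- this is the real content of the BFG proof and is what your one-sentence appeal to ``CRT-compatible representatives'' elides.

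Second, your rescaling $\gamma_1\mapsto c_1'\gamma_1$, $\gamma_2\mapsto c_2'\gamma_2$ does not preserve the unprimed Pl\"ucker relation: under $\beta_1\mapsto c_1'\beta_1$, $\beta_2\mapsto c_2'\beta_2$ the relation $c_1\gamma_2+\beta_1\beta_2+\gamma_1 c_2\equiv 0 \pmod{c_1c_2}$ needs both $\gamma_1$ and $\gamma_2$ scaled by the \emph{same} unit $c_1'c_2'$ (so that the whole left-hand side acquires the uniform factor $c_1'c_2'$). And this rescaling is not ``invisible'': it propagates through $Y_iB_i + Z_iC_i \equiv 1$ and gives $\tilde Z_1 = \wbar{c_1'}\,\wbar{c_2'}\,Z_1$, $\tilde Y_1=\wbar{c_1'}\,Y_1$ (an asymmetric twist), contrary to your claimed $Y_1,Z_1\mapsto \wbar{c_1'}Y_1,\wbar{c_1'}Z_1$. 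Tracing through, the asymmetric twist is precisely what makes the $Z_1 B_2$ term match after $B_2\mapsto c_2'\beta_2$, and the surviving factor $c_2'$ from $A_2=c_2c_2'$ in the $Y_1 A_2$ term is what produces $c_2'$ in $m_1'=\wbar{c_1'}^2 c_2' m_1$. You will not get the stated $m'$ with your rescalings as written; the consistency check you propose at the end (symmetry under $m_1\leftrightarrow m_2$, $(c_1,c_1')\leftrightarrow(c_2,c_2')$) should catch this once the bookkeeping is done carefully.
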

Similarly, we have Weil-quality estimates for these sums, courtesy of Stevens \cite{Stevens}.
Keeping track of the dependence on the character there yields
\begin{thm}[Stevens]
	\begin{align*}
		& \abs{S_{w_l}(\psi_m,\psi_n,(A_1,A_2))}^2 \le \\
		& d(A_1)^2 d(A_2)^2 \paren{\abs{m_1 n_2}, D} \paren{\abs{m_2 n_1}, D} (A_1, A_2) A_1 A_2,
	\end{align*}
	where $D = \frac{A_1 A_2}{(A_1, A_2)}$.
\label{thm:Stevens}
\end{thm}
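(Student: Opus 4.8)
The plan is the standard template for Weil-quality bounds on exponential sums: reduce to prime-power moduli using multiplicativity, and then treat the prime-power case by a $p$-adic stratification together with Weil's bound for one-variable sums. For the reduction, I would apply the BFG multiplicativity lemma above repeatedly to factor $S_{w_l}(\psi_m,\psi_n,(A_1,A_2))$ as a product, over the primes $p$ dividing $A_1 A_2$, of sums $S_{w_l}(\psi_\bullet,\psi_\bullet,(p^a,p^b))$ with $a=v_p(A_1)$, $b=v_p(A_2)$. The twists occurring in that lemma multiply the entries of $m$ by units coprime to $p$, so they change neither $\abs{S_{w_l}}$ nor the $p$-adic valuations of $m_1,m_2$; and each factor on the right-hand side of the claimed bound --- $d(\cdot)^2$, the two gcds with $D=\operatorname{lcm}(A_1,A_2)$, and $(A_1,A_2)A_1 A_2$ --- is the product over $p$ of its prime-power value. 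So it suffices to bound $\abs{S_{w_l}(\psi_m,\psi_n,(p^a,p^b))}$, and, using the symmetries of $S_{w_l}$ listed in \cite{BFG01}, I may assume $a\le b$.

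\textbf{Collapsing the prime-power sum.} Since now $A_1=p^a$ divides $A_2=p^b$, the quantity $m_1 Y_1 A_2/A_1$ is an integer, so $\e{m_1 Y_1 A_2/A_1}=1$ and that part of the phase drops out. More usefully, the defining congruence $A_1 C_2+B_1 B_2+C_1 A_2\equiv 0 \pmod{A_1 A_2}$ is linear in $C_2$ with leading coefficient $A_1=p^a$; it is solvable exactly when $p^a\mid B_1 B_2$, and then it pins down $C_2\pmod{p^b}$, and hence $Y_2$ and $Z_2$, explicitly in terms of $B_1,B_2,C_1$. Making this substitution collapses $S_{w_l}(\psi_m,\psi_n,(p^a,p^b))$ to an exponential sum over $B_1,C_1\pmod{p^a}$ and $B_2\pmod{p^b}$, with phase a rational function of those variables, subject to $p^a\mid B_1 B_2$ and $\paren{p,B_1,C_1}=\paren{p,B_2,C_2}=1$.

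\textbf{Stratification and Weil's bound.} I would then stratify by $v=v_p(B_1)\in\set{0,1,\ldots,a}$: the divisibility forces $p^{a-v}\mid B_2$, while coprimality forces $p\nmid C_1$ if $v\ge 1$ and $p\nmid C_2$ if $v<a$. On each stratum, bound the $B_1$- and $C_1$-sums trivially --- their ranges are controlled by $p^a$, which is where the factor $p^a$ carrying \emph{no} cancellation comes from --- and evaluate the inner sum over $B_2\pmod{p^b}$. After the substitution this inner sum is, up to $p$-adic units, a classical one-variable exponential sum of Kloosterman, Gauss, or Ramanujan type, and its ``numerator'' is a unit multiple of $m_2 n_1$ or of $m_1 n_2$ --- which is precisely why those two products, and nothing else, appear in the final bound --- so Weil's estimate bounds it by $\sqrt{\paren{\abs{m_2 n_1},p^b}}\sqrt{p^b}$ or $\sqrt{\paren{\abs{m_1 n_2},p^b}}\sqrt{p^b}$. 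Summing over the $\le a+1$ strata, together with the symmetric accounting on the $C$-side, produces the divisor factors $d(A_1)^2 d(A_2)^2$; reassembling $p^a\cdot p^b\cdot\paren{\abs{m_1 n_2},p^b}^{1/2}\paren{\abs{m_2 n_1},p^b}^{1/2}$, squaring, and taking the product over $p$ recovers the stated inequality.

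The hard part is the bookkeeping in the last step: following how the implicitly-defined integers $Y_i,Z_i$ --- well-defined only up to an ambiguity that has to be checked not to affect the summand --- enter the phase once $C_2$ has been eliminated, making sure the stratification is exhaustive and that each inner sum really is a Weil-boundable one-variable sum, and, most delicately, extracting the \emph{exact} power of $p$ in the two gcd factors rather than a weaker one. A geometric substitute for the last two steps, still of Weil quality, is to write $\abs{S_{w_l}(\psi_m,\psi_n,(p^a,p^b))}^2$ as a point count on an explicit affine scheme over $\Z/p^{a+b}\Z$ and invoke Deligne's bounds, in the spirit of Larsen's appendix to \cite{BFG01}; but pinning down the sharp divisor- and gcd-dependence would still require the $p$-adic analysis above.
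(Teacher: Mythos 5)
The paper does not actually prove this theorem. It cites Stevens' paper \cite{Stevens} wholesale --- specifically Theorem~(5.9) on page~49 --- and the only content the author adds is the observation that Stevens' argument as written discards the dependence on the indices $m,n$; the ``proof'' in this paper is the prescription to substitute, inside Stevens' argument, the two sharpened gcd inequalities
\[
\paren{\abs{\nu_1 p^{s-a}}_p^{-1}, \abs{\nu_2' p^{r-b}}_p^{-1}, p^r}
\le \paren{\abs{\nu_1 \nu_2'}_p^{-1},p^r}\, p^{\frac{s-a+r-b}{2}},
\]
and its companion, so that the local bound carries the gcd factors $\paren{\abs{m_1 n_2},D}$ and $\paren{\abs{m_2 n_1},D}$ rather than the raw absolute values. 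Your proposal, by contrast, is an attempt to reprove Stevens' theorem from scratch via the BFG Pl\"ucker-coordinate formula. That is a genuinely different (and much larger) undertaking: Stevens' original proof is not built on the BFG explicit form of $S_{w_l}$, and the $p$-adic inequalities the paper quotes come from a different local bookkeeping. Your reduction step --- BFG multiplicativity to pass to prime-power moduli, and the remark that the twisting units do not affect $p$-adic valuations or absolute values --- is correct and worth stating explicitly, but it replaces a citation, not a proof.

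Beyond the mismatch in scope, there is an accounting problem in the local step that makes me doubt the sketch closes as written. You claim the $B_1$- and $C_1$-sums bounded trivially contribute a factor $p^a$, ``which is where the $p^a$ carrying no cancellation comes from.'' But $B_1$ and $C_1$ each run mod $p^a$, so the trivial bound on those two sums is $p^{2a}$, not $p^a$. Run the counts on the $v$-th stratum ($v=v_p(B_1)$): $B_1$ has $\approx p^{a-v}$ choices, $C_1$ has $\approx p^a$, $B_2$ (constrained by $p^{a-v}\mid B_2$) has $\approx p^{b-a+v}$, for a total of $\approx p^{a+b}$ terms. To reach $p^{a+b/2}$ you need a full $p^{b/2}$ saving per stratum; Weil applied to the $B_2$ variable, whose effective modulus on that stratum is $p^{b-a+v}$, only yields $p^{(b-a+v)/2}$. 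For $v<a$ this is strictly less than $p^{b/2}$, so some additional square-root cancellation --- almost certainly in the $C_1$-sum, which after eliminating $C_2$ and writing $Z_1\equiv\wbar{C_1}(1-Y_1B_1)\pmod{p^a}$ acquires a Kloosterman/Ramanujan-type phase in $C_1$ --- is required and is not in your sketch. This is precisely the hard $p$-adic bookkeeping you flag as ``the hard part,'' and it is exactly what Theorem~(5.9) of \cite{Stevens} supplies; without carrying it out, the proposal does not establish the theorem.
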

Dabrowski and Fisher \cite{DF01} have improved these estimates in most cases, but we expect that the exponents $(A_1 A_2)^{\frac{1}{2}}$ are sharp in the general case, though the author is unaware of any such proof.

The hope that these generalized Kloosterman sums will play a similar role to their classical counterparts leads us to make Linnik-type conjectures for cancellation between terms in a sum of $SL(3,\Z)$ Kloosterman sums, and the main result of this paper confirms this for a smooth weight function when the moduli are roughly the same size:
\begin{thm}
\label{thm:SumsOfKloostermanSums}
	Let $f:(\R^+)^2\to\C$ of compact support be eight-times differentiable in each variable, and take $X$ and $Y$ to be large parameters, with $\psi_m$ and $\psi_n$ non-degenerate characters, then
	\begin{align*}
		& \sum_{v\in\set{\pm 1}^2} \sum_{c_1,c_2 \ge 1} \frac{S_{w_l}(\psi_m,\psi_{v n},c)}{c_1 c_2} f\paren{X\frac{\pi c_2 \abs{m_1 n_2}}{c_1^2}, Y\frac{\pi c_1 \abs{m_2 n_1}}{c_2^2}} \\
		& \qquad \ll_{f,m,n,\epsilon} (XY)^{\epsilon}\paren{(XY)^{\frac{5}{14}}+X^{\frac{1}{2}} + Y^{\frac{1}{2}}}.
	\end{align*}
\end{thm}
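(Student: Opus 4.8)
The plan is to read \thmref{thm:SumsOfKloostermanSums} off Li's $SL(3,\R)$ Kuznetsov formula run in reverse: instead of picking a test function on the spectral side, I choose it so that the long-element term on the geometric side reproduces the weight $f$. Note first that since $f$ has compact support the two arguments are $\asymp 1$, which pins the sum to $c_2\asymp c_1^2/X$ and $c_1\asymp c_2^2/Y$, hence to $c_1\asymp(X^2Y)^{1/3}$, $c_2\asymp(XY^2)^{1/3}$; moreover the arguments of $f$ are exactly $X$ and $Y$ times the \emph{squares} of the natural Bessel variables $\sqrt{\abs{m_1n_2}\,c_2}/c_1$ and $\sqrt{\abs{m_2n_1}\,c_1}/c_2$ appearing in the long-element Bessel kernel $\mathcal{J}_{w_l}$. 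So what is needed is a spectral test function $h=h_{X,Y,f}$ whose $\mathcal{J}_{w_l}$-transform is $f$ of these scaled variables, up to an admissible error. The sum over $v\in\set{\pm1}^2$ in the statement will appear automatically, since the long-element Bessel function on $SL(3,\R)$ splits according to sign characters and the inversion recombines these pieces.

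\emph{Step 1 (inversion).} Using the Mellin--Barnes representation of $\mathcal{J}_{w_l}$ established earlier in the paper, I would write $f$ through its double Mellin transform and match the integrand term-by-term; this produces $h_{X,Y,f}$ itself as a double Mellin--Barnes integral whose kernel is an explicit quotient of Gamma factors in the spectral parameter $\mu=(\mu_1,\mu_2,\mu_3)$ carrying factors $X^{-s_1}Y^{-s_2}$. The hypothesis that $f$ be eight-times differentiable in each variable is spent here: it gives enough decay of the Mellin transform of $f$ to make the iterated contour integrals absolutely convergent, to justify interchanging the $c$-sum with the spectral sum, and to control the tails after contour shifts. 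This is the ``partial inversion formula'': one cannot hit $f$ exactly, but the discrepancy is of lower order.

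\emph{Step 2 (remaining geometric terms).} Feeding $h_{X,Y,f}$ into Li's formula also produces the diagonal term and the contributions of the other Weyl elements; rearranging the identity, our sum equals the spectral side minus all of these. The diagonal term is a single explicit integral of $h$, of size $\ll(XY)^\epsilon$. The two Weyl elements of order two contribute one-dimensional sums of classical Kloosterman sums in $c_1$ (with $c_2=1$) or in $c_2$ (with $c_1=1$) against a smooth weight of the shape manufactured in Step~1; the constraint $c_2=1$ forces $c_1\asymp X^{1/2}$ (and symmetrically), so by Weil's bound $\abs{S(a,b,c_1)}\ll c_1^{1/2+\epsilon}$ together with the $1/c_1$ weight each such sum is $\ll(XY)^\epsilon(X^{1/2}+Y^{1/2})$; the element $w_l$ with one modulus equal to $1$ is smaller still. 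This is the source of the $X^{1/2}+Y^{1/2}$ in the theorem.

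\emph{Step 3 (spectral side and the main obstacle).} The cuspidal part is $\sum_j \overline{A_j(m)}A_j(n)\,h(\mu_j)$ over the $SL(3,\Z)$ Maass spectrum (times harmless normalising factors), plus analogous Eisenstein integrals. Apply Cauchy--Schwarz and the $GL(3)$ spectral large sieve, i.e.\ the Rankin--Selberg mean bound for $\sum_{\norm{\mu_j}\le T}\abs{A_j(n)}^2$, weighted by a majorant for $\abs{h(\mu_j)}$. A stationary-phase analysis of the double Mellin--Barnes integral of Step~1 shows $h(\mu_j)$ is essentially supported on $\norm{\mu_j}\ll(XY)^{O(1)}$ and is small there; for tempered $\mu_j$ this already suffices, but the forms at hand are only known to satisfy $\abs{\Re\mu_{j,i}}\le 5/14$ (Kim--Sarnak), and shifting the $s$-contours past poles whose location depends on $\Re\mu_j$ inflates $\abs{h(\mu_j)}$ by a factor $\ll(XY)^{\abs{\Re\mu_{j,i}}}\le(XY)^{5/14}$, which after optimising the cut-off is responsible for the $(XY)^{5/14+\epsilon}$ term; the Eisenstein and residual contributions, built from $\zeta$ and $GL(2)$ $L$-values on the critical line, are estimated the same way and are dominated by the terms already present. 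The genuine difficulty --- and the reason Li's formula has resisted use --- is the combination of Step~1 with this size analysis: inverting a two-dimensional $GL(3)$ Bessel transform, keeping the inversion uniform in $X$, $Y$ and $\mu$, placing the Mellin--Barnes contours and accounting for every pole they cross so that no spurious main term survives, and extracting a majorant for $h$ clean enough that the large sieve delivers the exponent $5/14$ rather than something weaker.
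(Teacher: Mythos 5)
You have the overall architecture right: choose a test function via partial inversion of the long-element transform so that the Kuznetsov formula produces the desired sum of long-element Kloosterman sums, then bound the remaining terms; the $(XY)^{5/14}$ from Kim--Sarnak and the mean-value estimates, the eight derivatives spent on decay of $\hat{f}$, and the role of the Mellin--Barnes representation are all consistent with the paper. But Step~2 contains a genuine error that would sink the argument as written.

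You assert that the discrepancy in the partial inversion ``is of lower order'' and that the term $X^{1/2}+Y^{1/2}$ arises instead from one-dimensional sums of classical Kloosterman sums for the intermediate Weyl elements, with ``the constraint $c_2=1$ forc[ing] $c_1\asymp X^{1/2}$'' and Weil's bound finishing the estimate. Both halves of this are wrong. The Weyl elements with Bruhat condition $c_1=1$ or $c_2=1$ are $w_2$ and $w_3$, which are degenerate and do not occur in the Kuznetsov formula for non-degenerate characters $\psi_m,\psi_n$; the intermediate elements that do appear are $w_4$ (Bruhat condition $c_2\mid c_1$) and $w_5$ (Bruhat condition $c_1\mid c_2$), which are new $SL(3)$ sums bounded by Larsen's estimates, not classical Kloosterman sums. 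In the paper these sums converge absolutely once the $q$-contours of the manufactured test function $\hat{k}_{\mathfrak{q}}$ are placed at $\mathfrak{q}=-2\epsilon$, and so they contribute only $(XY)^{2\epsilon}$, as do the Eisenstein and trivial terms. The $X^{1/2}+Y^{1/2}$ actually comes from the error terms $F_1,\dots,F_{10}$ of the partial inversion (\thmref{thm:PartialInversion}): the inversion is only a ``first-term'' inversion of the two-dimensional $H_{w_l}$ transform, the residual terms $F_j$ are genuinely of size $(XY)^{10\epsilon}(X^{1/2}+Y^{1/2})$ with the specific contours of Table~\ref{tab:FjContours}, and this loss is exactly the ``second-term asymptotics'' obstruction announced in the introduction. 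If you discard the inversion error as lower order and try to recover $X^{1/2}+Y^{1/2}$ from $w_4$, $w_5$, you will both undercount (missing the real source of the loss) and make a structurally incorrect claim about which geometric terms appear.
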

If we instead apply Stevens' estimate for the individual Kloosterman sums, we are led to the bound $\paren{XY}^{\frac{1}{2} + \epsilon}$, so we are seeing cancellation between terms in the sum.
The $(XY)^{\frac{5}{14}}$ comes from the Kim-Sarnak bound on the Langlands parameters of $SL(3,\Z)$ cusp forms, and the $X^{\frac{1}{2}}$ and $Y^{\frac{1}{2}}$ terms come from some second-term asymptotics which present a difficulty in our partial inversion of a two-dimensional integral transform.
If the generalized Ramanujan-Selberg conjecture holds, then our bound becomes $(XY)^{\epsilon}\paren{X^{\frac{1}{2}} + Y^{\frac{1}{2}}}$, but we expect that the optimal bound would be $(XY)^{\epsilon}$ if one had a full inversion formula.

We expect that the most interesting examples should have $c_1 \asymp c_2$, i.e. when $X=Y$, and in this case the dominant term becomes $X^{\frac{5}{7}+\epsilon}$, which is entirely controlled by the Kim-Sarnak bound.
Again, under the generalized Ramanujan-Selberg conjecture, this becomes $X^{\frac{1}{2}+\epsilon}$ and the optimal bound should be $X^\epsilon$.

We have not chosen to track the dependence on the indices $m$ and $n$ here, but it is simple to do so.
The resulting bound is not close to optimal; essentially, we are multiplying the bound by powers of $m$ and $n$.
For comparison, Sarnak and Tsimerman \cite{SarnakTsimerman01} have made \thmref{thm:KuznetsovsBound} explicit in $m$ and $n$ with the bound
\[ \paren{x^{\frac{1}{6}} + (mn)^{\frac{1}{6}} + (m+n)^{\frac{1}{8}} (mn)^{\frac{7}{128}}} (mnx)^\epsilon, \]
and the third term may be removed if we assume the Ramanujan-Selberg Conjecture.
Similar bounds for the long-element Kloosterman sums on $SL(3)$ would require a great deal more work, and optimal bounds are not possible with the current method, again because of the error terms.

Finally, there is another new type of Kloosterman sum on $SL(3)$ which arises in the same manner, but is much smaller in summation.
There is some contention over whether this second type also has good cancellation in sums:
If it behaves as the examples we have studied so far, the answer should be yes, but Bump, Friedberg and Goldfeld have put forth a competing theory in \cite[Conjecture 1.2]{BFG01} to the effect that the Kloosterman zeta function of this sum should have poles on the boundary of its region of absolute convergence; in particular, this region would coincide with the region of conditional convergence, and there would be no significant cancellation between terms.

The methods here come from harmonic analysis on symmetric spaces.
Specifically, these results are obtained by studying a generalization of the Kuznetsov formula to $SL(3,\R)$:
Starting from a proof of Kuznetsov's trace formula on $SL(2,\R)$ by Zagier, and using the Fourier coefficient decomposition of automorphic forms on $SL(n,\R)$ by Friedberg, Li \cite{Goldfeld} has given a generalization of the first of Kuznetsov's trace formulas to $SL(n,\R)$ and this appears in Goldfeld's book on automorphic forms on $SL(n,\R)$ \cite{Goldfeld}.
So far, only the most basic of estimates have come out of the $SL(n)$ Kuznetsov formula and only for $SL(3)$, these may be found in a paper of Li herself \cite{Li01}, but in general, the integral transforms appearing in her formula are too complex to use effectively.
Blomer has been able to push somewhat farther by developing his own generalization of Kuznetsov's first formula \cite{Bl01}.

Using the Kuznetsov formula, we are able to express the integral transforms as an integral of the original test function against a function in Mellin-Barnes integral form.
With this representation, we can produce a sort of first-term inversion for the integral transform attached to the sum of long-element Kloosterman sums, which gives us a sort of incomplete generalization of Kuznetsov's second trace formula, and the proof of \thmref{thm:SumsOfKloostermanSums} then proceeds much as in Kuznetsov's original paper \cite{Kuz01}.

The central idea is that the spectral parameters of the $SL(3,\Z)$ automorphic forms occur in a strip which is positive distance from the region of absolute convergence of the long-element Kloosterman zeta function.
The aforementioned difficulties with the second-term asymptotics prevent us from obtaining the analytic continuation of the Kloosterman zeta function, but a similar path of shifting contours outside the region of absolute convergence yields the above results.

The paper \cite{BFG01} contains an alternate approach; they state, but do not explicitly prove, the meromorphic continuation of the \textit{unweighted} Kloosterman zeta function (the main object of study in the paper is weighted by a type of generalized Bessel function, much as the sum appearing in the spectral Kuznetsov formula), which would in principle give the above results without the error terms $X^{1/2}$ and $Y^{1/2}$, if one could control the growth of the Kloosterman zeta function on vertical lines in the complex plane.
On $SL(2)$, this method was started by Selberg \cite{Se02} (see \cite{Se03} as well as the G\"ottingen lecture in the second volume) and completed by Goldfeld and Sarnak \cite{GS01}.

Similarly, Yangbo Ye \cite{Y01} has given a third approach starting directly with sums of the long-element Kloosterman sums.
He provides a spectral interpretation which could be used to provide bounds in much the same manner as the current paper.
The difficulty with his Kuznetsov formula, as with Li's, lies in the complexity of the generalized Bessel functions, hence an analysis of the functions occurring in his formula, as we are about to provide for Li's, should produce similar results.
It would be an interesting problem for future research to compare the two.

\section{Background}
\label{sec:Background}
A good reference here is Goldfeld's book \cite{Goldfeld} for the automorphic forms side, but one should certainly start with the paper of Bump, Friedberg, and Goldfeld \cite{BFG01}.
For the harmonic analysis on symmetric spaces, the author learned from Terras' book \cite{T02}, but would like to recommend Jorgensen and Lang \cite{JL01}.
Before we begin, we have two notes on notation:
We tend to consider $SL(3,\R)$ embedded as the matrices of positive determinant in $GL(3,\R)/\cyclic{\R^+,\pm I}$, so when we discuss matrices of determinant other than one, we simply mean to divide by the cube root of the determinant.
Also, we parameterize by the Langland's parameters $\mu = (\mu_1,\mu_2,\mu_3)$ with $\mu_3 = -\mu_1-\mu_2$ in place of the parameters $\nu$ used by Goldfeld.

We write the Iwasawa decomposition for $z\in SL(3,\R)$ as $z \equiv xy \pmod{SO(3,\R)}$, where
\begin{align}
\label{eq:IwasawaXY}
	x \in U(\R) = \set{\Matrix{1&x_2&x_3\\&1&x_1\\&&1},x_i \in \R}, \\
\label{eq:IwasawaY}
	y \in Y(\R)= \set{\Matrix{y_1 y_2 \\ & y_1 \\ &&1},y_i \in \R^+};
\end{align}
note the ordering of the indices.
The $SL(3,\R)$-invariant measure has the form
\begin{align}
\label{eq:IwasawaHaar}
	dz = dx\,dy,\qquad dx = dx_1\,dx_2\,dx_3, \qquad dy = \frac{dy_1\,dy_2}{(y_1 y_2)^3},
\end{align}
and we define characters on the space of such $x$ and $y$ as \\ $\psi_{(m_1,m_2)}(x)=\e{m_1 x_1 + m_2 x_2}$, and $p_\mu(y) = y_1^{\mu_1+\mu_2} y_2^{\mu_1}$.
The power function is normalized by $\rho = (1,0,-1)$ so that $p_{\rho+\mu}(xy) = y_1^{1+\mu_1+\mu_2} y_2^{1+\mu_1}$.
As another notational quirk, we tend not to differentiate between the matrix $y$ and the pair $y = (y_1, y_2)$, and for signed pairs $m$, we will tend to write $\abs{m} = (\abs{m_1}, \abs{m_2})$.
Lastly, when integrating over the space $Y(\R)$ we use the measure $dy$ above.

In terms of the Langlands parameters, the definition of $SL(3,\Z)$ Maass cusp forms becomes: Writing $G=SL(3,\R)$, $K=SO(3,\R)$, and $\Gamma=SL(3,\Z)$, $\varphi:G/K\to\C$ is a Maass cusp form if $\varphi(\gamma z) = \varphi(z)$ for all $\gamma\in \Gamma$, $\varphi \in L^2(\Gamma\backslash G/K)$, 
\[ \int_{U_i^*(\Z)\backslash U_i^*(\R)} \varphi(uz) du = 0, \]
for the upper-triangular groups
\[ U_1^* = \set{\Matrix{1&0&*\\&1&*\\&&1}}, \qquad U_2^* = \set{\Matrix{1&*&*\\&1&0\\&&1}}, \]
and $\varphi$ is an eigenvalue of all of $\mathfrak{D}$, the group of $G$-invariant differential operators on $G/K$.
A Maass form is of type $\mu = (\mu_1, \mu_2, \mu_3)$ (where $\mu_1 + \mu_2 + \mu_3 = 0$) if it shares the eigenvalues of the power function at $\mu$:
\[ -\Delta_1 p_{\rho+\mu} = \paren{1-\frac{\mu_1^2+\mu_2^2+\mu_3^2}{2}} p_{\rho+\mu},\qquad \Delta_2 p_{\rho+\mu} = -\mu_1 \mu_2 \mu_3 p_{\rho+\mu}, \]
where $\Delta_1$ and $\Delta_2$ are the explicit generators of $\mathfrak{D}$, given in \cite[p153]{Goldfeld}.
It is known that $-\wbar{\mu}$ is some permutation of $\mu$ for the Langlands parameters of $SL(3,\Z)$ Maass cusp forms \cite{Va01}, and we will also need a result of Kim and Sarnak that $\abs{\Re(\mu_i)} \le \frac{5}{14}$ \cite{LRS01,LRS02,KS01}.

For the Langlands Eisenstein series, we have two types:
\[ E(z,\mu) = \sum_{\gamma \in \Gamma_\infty\backslash \Gamma} p_{\rho+\mu}(\gamma z), \qquad E_\phi(z,\mu_1) = \sum_{\gamma \in P_{2,1} \backslash \Gamma} p_{\mu_1,\phi}(\gamma z), \]
where
\[ \Gamma_\infty = \set{\Matrix{1&*&*\\0&1&*\\0&0&1} \in \Gamma}, \qquad P_{2,1} = \set{\Matrix{*&*&*\\ *&*&*\\0&0&1} \in \Gamma}, \]
and $p_{\mu_1,\phi}(g) = (y_1^2 y_2)^{\frac{1}{2}+\mu_1} \phi(x_2+iy_2)$ with $\phi$ any $SL(2,\Z)$ cusp form.
The Eisenstein series are smooth functions on $\Gamma\backslash G/K$ which are eigenvalues of all of $\mathfrak{D}$, but not square-integrable.
They have meromorphic continuation to all of $\C$ in each $\mu_i$.

We will be making heavy use of the Jacquet-Whittaker function, which is initially defined by
\[ W(z;\mu,\psi_m) = \int_{U(\R)} p_{\rho+\mu}(w_l uz) \psi_m(u) du, \]
where $U(\cdot) \subset SL(3,\cdot)$ to be the Borel subgroup, i.e. the group of upper triangular matrices with ones on the diagonal.
The completed Whittaker function is given by $W^*(z;\mu,\psi_m) = \Lambda(\mu) W(z;\mu,\psi_m)$, where
\[ \Lambda(\mu) = \pi^{-\frac{3}{2}+\mu_3-\mu_1} \Gamma\paren{\frac{1+\mu_1-\mu_2}{2}} \Gamma\paren{\frac{1+\mu_1-\mu_3}{2}} \Gamma\paren{\frac{1+\mu_2-\mu_3}{2}}. \]

We will need a number of basic facts about the Whittaker function, so we collect them here:
The dependence on $x$ is is given by $W(xy;\mu,\psi_m) = \psi_m(x) W(y;\mu,\psi_m)$.
It is also easy to check that if $0 \ne t_1,t_2 \in \R$,
\[ W(y;\mu,\psi_{t_1 t_2}) = p_{-\rho-\mu^{w_l}}(\abs{t}) W(\abs{t} y;\mu,\psi_{11}), \]
by sending $u\mapsto t^{-1} v u t v$, where
\[ \abs{t} = \Matrix{\abs{t_1} \abs{t_2} \\ & \abs{t_1} \\&&1}, \qquad v = \Matrix{\sgn(t_1)\\&\sgn(t_1 t_2)\\&&\sgn(t_2)}, \]
and $\mu^{w_l} = (\mu_3,\mu_2,\mu_1)$.
Thus we only need an analytic expression for $W(y,\mu,\psi_{11})$.

We have the double Mellin transform pair \cite[6.1.4, 6.1.5]{Goldfeld}
\begin{align}
\label{eq:WhittakerMellinExpand}
	W^*(y,\mu,\psi_{11}) &= -\frac{1}{16\pi^4} \int_{\Re(u) = (2,2)} G(u, \mu) (\pi y_1)^{1-u_1} (\pi y_2)^{1-u_2} du, \\
	G(u, \mu) &= \frac{4}{\pi^2} \int_{Y(\R)} W^*(y,\mu,\psi_{11}) (\pi y_1)^{1+u_1} (\pi y_2)^{1+u_2} dy, \nonumber
\end{align}
where
\[ G(u, \mu) = \frac{\Gamma\paren{\frac{u_1-\mu_1}{2}}\Gamma\paren{\frac{u_1-\mu_2}{2}}\Gamma\paren{\frac{u_1-\mu_3}{2}}\Gamma\paren{\frac{u_2+\mu_1}{2}}\Gamma\paren{\frac{u_2+\mu_2}{2}}\Gamma\paren{\frac{u_2+\mu_3}{2}}}{\Gamma\paren{\frac{u_1+u_2}{2}}}. \]
These give the asymptotic $W^*(y,\mu,\psi_{11}) \ll_\mu y_1^{1-t_1} y_2^{1-t_2}$ for any \\ $t_1 \ge c_1 = \max_i\set{\Re(-\mu_i)}$, $t_2 \ge c_2 = \max_i\set{\Re(\mu_i)}$.
In particular, the Mellin transform of the Whittaker function converges absolutely for $\Re(t_1) > c_1$,$\Re(t_2) > c_2$.

Using the Jacquet-Whittaker function, our particular choice of normalizations for the Fourier-Whittaker coefficents of a Maass cusp form $\varphi$, for example, are given by
\begin{align}
\label{eq:FourierWhittakerNormalization}
	\int_{U(\Z)\backslash U(\R)} \varphi(uy) \psi_m(u) du = \frac{\rho_\varphi(m)}{\abs{m_1 m_2}} W^*(\abs{m}y;\mu,\psi_{11}),
\end{align}
where by $\abs{m}$ we mean the pair $(\abs{m}_1, \abs{m}_2)$.
We should point out that the above normalization controls the growth of the Fourier-Whittaker coefficients, but says nothing about the normalization of the function $\varphi$ itself.
We use orthonormalization, as stated in \thmref{thm:LisKuznetsovFormula}, but these formulae are frequently given in Hecke-normalization, so that
\[ \rho_\varphi^\text{Hecke}(m) = \frac{\rho_\varphi(m)}{\rho_\varphi(1,1)}, \]
the conversion then involves a residue of the Rankin-Selberg $L$ function associated to $\varphi$ \cite{Bl01}:
\begin{thm}
	For $\rho_\varphi$, $\eta_\phi$, and $\eta$ the Fourier-Whittaker coefficients of $\varphi$, $E_\phi$, and $E$, respectively, we have
	\begin{align*}
		\frac{\rho_\varphi(m)}{\abs{\Lambda(\mu)}} =& \frac{2}{\sqrt{3}} \frac{\rho_\varphi^\text{Hecke}(m)}{\sqrt{\res_{s=1} L(\varphi \times \wtilde{\varphi},s)}}, \\
		\frac{\eta_\phi(m;\mu_1)}{\Lambda(\mu)} =& C_1 \frac{\eta_\phi^\text{Hecke}(m;\mu_1)}{L(\phi, 1+\mu) \sqrt{L(\text{sym}^2 \phi,1)}}, \text{ for } \Re(\mu_1) = 0, \\
		\frac{\eta(m;\mu)}{\Lambda(\mu)} =& C_2 \frac{\eta^\text{Hecke}(m;\mu)}{\zeta(1+\mu_1-\mu_2)\zeta(1+\mu_1-\mu_3)\zeta(1+\mu_2-\mu_3)}, \text{ for } \Re(\mu) = 0,
	\end{align*}
	for some absolute constants $C_1$, $C_2$.
\end{thm}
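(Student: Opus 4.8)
The plan is to run the Rankin--Selberg method of Zagier. For each of the three types of automorphic form one pairs the square of its absolute value against the maximal-parabolic Eisenstein series $E^{P_{2,1}}(z,s)$ attached to $P_{2,1}$ and the constant function (a degeneration of the minimal series $E(z,\mu)$), which has a simple pole at $s=1$ whose residue is a constant multiple of the constant function; unfolding that integral and reading off the residue produces the normalization. I would carry out the cuspidal case in full and then indicate the changes needed for the two Eisenstein cases, where the $L^2$-inner product must be replaced by its regularized analogue.

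For a Hecke--Maass cusp form $\varphi$ of type $\mu$, Hecke multiplicativity gives $\rho_\varphi(m)=\rho_\varphi(1,1)\,\rho_\varphi^{\text{Hecke}}(m)$, so it suffices to determine $\abs{\rho_\varphi(1,1)}$ for the $L^2$-normalized $\varphi$. Form $I(s)=\int_{\Gamma\backslash G/K}\abs{\varphi(z)}^2\,E^{P_{2,1}}(z,s)\,dz$; since $\res_{s=1}E^{P_{2,1}}$ is a constant involving $\mathrm{Vol}(\Gamma\backslash G/K)$ (this constant, together with the normalizations below, is the ultimate source of the $\tfrac{2}{\sqrt 3}$), we get $\res_{s=1}I(s)=c\,\norm{\varphi}_2^2=c$ on one side. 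On the other side, unfolding $E^{P_{2,1}}$ over $P_{2,1}\backslash\Gamma$ and inserting the Fourier--Whittaker expansion of $\varphi$ — whose non-degenerate part, by \eqref{eq:FourierWhittakerNormalization}, carries the coefficients $\rho_\varphi(m)/\abs{m_1 m_2}$ against $W^*(\abs{m}y;\mu,\psi_{11})$ — expresses $I(s)$, {\it \`a la} Jacquet--Piatetski-Shapiro--Shalika, as an arithmetic Dirichlet series in the $\rho_\varphi(m)$ times the archimedean integral $\int_{Y(\R)}\abs{W^*(y;\mu,\psi_{11})}^2\,p_{\rho+\mu_s}(y)\,dy$ taken with the measure of \eqref{eq:IwasawaHaar}. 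Opening $\abs{\rho_\varphi(m)}^2=\abs{\rho_\varphi(1,1)}^2\abs{\rho_\varphi^{\text{Hecke}}(m)}^2$ and summing identifies the arithmetic factor, up to an elementary $\zeta$-factor, with $L(\varphi\times\wtilde\varphi,s)$; the archimedean integral I would evaluate by Stade's formula — equivalently, by feeding the Mellin--Barnes representation \eqref{eq:WhittakerMellinExpand} into the $y$-integral, collapsing two of the contour variables against the power function, and applying a Barnes-type lemma to the resulting integral of $G(u,\mu)\,\overline{G(u,\mu)}$ — obtaining a ratio of Gamma functions. Taking $\res_{s=1}$ on both sides and solving for $\abs{\rho_\varphi(1,1)}$, one checks that these Gamma factors assemble with the completion $\Lambda(\mu)$ (recall $W^*=\Lambda(\mu)W$) so that $\abs{\rho_\varphi(1,1)}/\abs{\Lambda(\mu)}=\tfrac{2}{\sqrt 3}\paren{\res_{s=1}L(\varphi\times\wtilde\varphi,s)}^{-1/2}$, which is the stated identity after dividing by $\rho_\varphi(1,1)$. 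Only $\abs{\rho_\varphi(1,1)}$ is pinned down this way, which is why the statement carries $\abs{\Lambda(\mu)}$ and a square root.

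For $E_\phi$ and $E$ the analogous pairing is only conditionally convergent, so I would replace the naive inner product by Zagier's regularized inner product — working with truncated Eisenstein series and the Maass--Selberg relations — in which the role of $\norm{\cdot}_2^2$ is taken by the Plancherel density at $\mu_1$, respectively $\mu$; alternatively one avoids regularization altogether by computing the Fourier--Whittaker coefficients $\eta_\phi(m;\mu_1)$ and $\eta(m;\mu)$ directly from the definitions of $E_\phi$ and $E$, where they come out as explicit divisor-type sums. Either way the arithmetic factor is the Rankin--Selberg $L$-function of an Eisenstein series, which factors: for $E_\phi$ it is built from $L(\phi\times\wtilde\phi,s)=\zeta(s)L(\mathrm{sym}^2\phi,s)$ together with a shifted copy of $L(\phi,\cdot)$, and its edge value contributes the denominator $L(\phi,1+\mu)\sqrt{L(\mathrm{sym}^2\phi,1)}$; for $E$ it factors as $\prod_{i\ne j}\zeta(s+\mu_i-\mu_j)$, contributing $\zeta(1+\mu_1-\mu_2)\zeta(1+\mu_1-\mu_3)\zeta(1+\mu_2-\mu_3)$. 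The archimedean Gamma factors reassemble into $\Lambda(\mu)$, now without absolute value since the coefficients are computed on the nose, leaving only the absolute constants $C_1$ and $C_2$; the hypotheses $\Re(\mu_1)=0$ and $\Re(\mu)=0$ are exactly where the regularized pairing and the Maass--Selberg machinery apply cleanly.

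I expect the main obstacle to be bookkeeping rather than any conceptual difficulty: evaluating the Stade integral in closed form and then tracking every normalization consistently — the powers of $\pi$ and the measure constants in \eqref{eq:IwasawaHaar} and \eqref{eq:WhittakerMellinExpand}, the volume of $\Gamma\backslash G/K$ and the constant residue of $E^{P_{2,1}}$, and the relation $W^*=\Lambda(\mu)W$ — so as to land on the exact constant $\tfrac{2}{\sqrt 3}$ rather than merely a proportionality. A secondary, more routine point is justifying the analytic continuation past $s=1$ and the interchange of sum and integral in the unfolding (absolute convergence for $\Re(s)$ large), and, in the Eisenstein cases, making the regularized inner product and the extraction of the edge value rigorous.
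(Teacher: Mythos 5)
The paper does not prove this theorem: it is imported from Blomer \cite{Bl01}, with the remark that the constants differ due to slightly different normalizations, and no argument is given. Your Rankin--Selberg plan --- unfolding $\abs{\varphi}^2$ against the $P_{2,1}$-parabolic Eisenstein series, reducing the archimedean period to Stade's formula at $s=1$, and reading the Hecke-to-$L^2$ conversion off the residue (with the regularized or direct-computation variant for the two continuous-spectrum families) --- is exactly the standard derivation of these normalization constants and is how \cite{Bl01} obtains them, so your approach is correct and matches the cited source.
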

This varies somewhat from \cite{Bl01} due to slightly different normalizations.

We will require both the abstract and explicit definitions of the $SL(3,\Z)$ Kloosterman sums, so we start by defining
\begin{align}
\label{eq:DegenerateU}
	U_w=(w^{-1}\, U \, w)\cap U, \qquad \text{and} \qquad \wbar{U}_w=(w^{-1}\, \trans{U} \, w)\cap U,
\end{align}
where $w$ is an element of the Weyl group $W$, the Bruhat decomposition of an element $g\in SL(3,\R)$ is $g=b_1 cw b_2$ with $b_1,b_2\in U(\R)$ (the decompostion is unique if we require $b_2 \in \wbar{U}_w(\R)$), $w\in W$, and
\[ c = \Matrix{\frac{1}{c_2}&&\\&\frac{c_2}{c_1}&\\&&c_1}, c_1,c_2\in\N. \]
The Weyl group $W$ has the 6 elements
\begin{equation*}
	\begin{array}{rlrlrlrl}
		I &= \Matrix{1\\&1\\&&1}, & w_2 &= \Matrix{&1\\-1\\&&1}, & w_3 &= \Matrix{1\\&&-1\\&1}, \\
		w_4 &= \Matrix{&1\\&&1\\1}, & w_5 &= \Matrix{&&1\\1\\&1}, & w_l &= \Matrix{&&1\\&-1\\1}.
	\end{array}
\end{equation*}
The generalized Kloosterman sums we study here are exponential sums attached to the Bruhat decomposition:
We set
\[ S_w(\psi_m,\psi_n, c) = \sum_{b_1 cw b_2 \in U(\Z)\backslash \Gamma / V\wbar{U}_w(\Z)} \psi_m(b_1) \psi_n(b_2), \]
matrices provided the sum is independent of the choice of Bruhat decompositions of each coset representative, and 0 otherwise.
This independence assumption is called the compatibility condition, and is equivalent to the condition
\begin{align}
\label{eq:CompatibilityCondition}
	\psi_m((cvw) u (cvw)^{-1}) \psi_n(u^{-1}) = 1, \forall u\in U_w(\R).
\end{align}
Here $V$ is the group of diagonal orthgonal matrices:
\[ V=\set{I, \quad \Matrix{-1\\&-1\\&&1}, \quad \Matrix{-1\\&1\\&&-1}, \quad \Matrix{1\\&-1\\&&-1}}. \]

We wish to explicitly define the Kloosterman sums, which is best done in terms of the Pl\"ucker coordinates:
Coset representatives
\[ \Matrix{*&*&*\\d&e&f\\a&b&c} \in U(\Z)\backslash\Gamma \]
are characterized by six invariants:
The bottom row $A_1 = a, B_1 = b, C_1 = c$ having $(A_1, B_1, C_1) = 1$, and the first set of minors $A_2 = bd-ae$, $B_2 = af-cd$, $C_2 = ce - bf$ having $(A_2, B_2, C_2) = 1$ and subject to $A_1 C_2 + B_1 B_2 + C_1 A_2 = 0$.
In \cite{BFG01}, the Bruhat decomposition for each element of the Weyl group were computed using these invariants -- note that membership in a particular Bruhat cell imposes certain requirements on the Pl\"ucker coordinates, giving explicit forms to the $SL(3,\Z)$ Kloosterman sums.
The three degenerate sums are listed in \tableref{tab:DegenerateKloostermanSums}, using the classical Kloosterman sum.
\begin{table}
\begin{tabular}{l|lll}
	& Bruhat & Compatibility & $S_w(\psi_m, \psi_n, (c_1,c_2))$ \\
	\hline
	$I$ & $c_1=c_2=1$ & $m=n$ & 1 \\
	$w_2$ & $c_1 = 1$ & $m_1=n_1=0$ & $S(-m_2, -n_2, c_2)$ \\
	$w_3$ & $c_2 = 1$ & $m_2=n_2=0$ & $S(m_1, n_1, c_1)$
\end{tabular}
\caption{Degenerate $SL(3,\Z)$ Kloosterman Sums}
\label{tab:DegenerateKloostermanSums}
\end{table}

The $w_4$ Kloosterman sum is a new exponential sum.
Its Bruhat condition is $c_2 | c_1$ and the compatibility condition is
\begin{align}
\label{eq:w4Compatibility}
	m_2 c_1 = n_1 c_2^2.
\end{align}
Explicitly,
\begin{align*}
	& S_{w_4}(\psi_m, \psi_n, (A_1, B_2)) = \\
	& \sum_{\substack{C_2 \pmod{B_2}\\ C_1 \pmod{A_1}\\(A_1/B_2, C_1) = (B_2, C_2) = 1}} \e{-m_2 \frac{\wbar{C_2} C_1}{B_2}-m_1\frac{\wbar{C_1} B_2}{A_1}-n_2\frac{C_2}{B_2}}.
\end{align*}

The $w_5$ Kloosterman sum is essentially the same as for $w_4$.
Its Bruhat condition is $c_1 | c_2$ and the compatibility condition is
\begin{align}
\label{eq:w5Compatibility}
	m_1 c_2 = n_2 c_1^2,
\end{align}
and we have
\[ S_{w_5}(\psi_m, \psi_n, (c_1,c_2)) = S_{w_4}(\psi_{-m_2,m_1}, \psi_{n_2,-n_1}, (c_2,c_1)). \]

There are Weil-quality bounds for this first type of Kloosterman sum due to Larsen (in \cite{BFG01}):
\begin{thm}[Larsen] \ 

\begin{enumerate}
	\item
		\begin{align*}
			& \abs{S_{w_4}(\psi_m,\psi_n,c)} \le \\
			& \Min{d(c_2)^\varkappa \paren{\abs{m_1}, \frac{c_1}{c_2}} c_2^2, d(c_1) (\abs{m_1}, \abs{n_2}, c_2) c_1},
		\end{align*}
	\item
		\begin{align*}
			& \abs{S_{w_5}(\psi_m,\psi_n,c)} \le \\
			& \Min{d(c_1)^\varkappa \paren{\abs{m_2}, \frac{c_2}{c_1}} c_1^2, d(c_2) (\abs{m_2}, \abs{n_1}, c_1) c_2},
		\end{align*}
\end{enumerate}
	where $\varkappa=\frac{\log 3}{\log 2}$.
\label{thm:Larsen}
\end{thm}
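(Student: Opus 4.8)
The plan is to reduce everything to the $w_4$ sum. Part (2) follows from part (1) and the stated identity $S_{w_5}(\psi_m,\psi_n,(c_1,c_2)) = S_{w_4}(\psi_{-m_2,m_1},\psi_{n_2,-n_1},(c_2,c_1))$ by relabelling the indices and the two moduli. For $S_{w_4}$, the first step is to invoke a multiplicativity relation analogous to the multiplicativity lemma of BFG quoted above for $S_{w_l}$ --- proved the same way, by applying the Chinese remainder theorem to the Pl\"ucker coordinates appearing in the explicit formula --- so that $S_{w_4}(\psi_m,\psi_n,(c_1,c_2))$ factors as a product over primes $p\mid c_1$ of local sums attached to $(p^{a_p},p^{b_p})$ with $b_p\le a_p$ and twisted indices of the same $p$-adic valuations as $m,n$. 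Since $d(c_i)=\prod_p(1+v_p(c_i))$ and the gcd factors are multiplicative, it then suffices to prove both claimed bounds for prime-power moduli $(p^a,p^b)$, $b\le a$, with $1+a$, $(1+b)^{\varkappa}$, $(\abs{m_1},p^{a-b})$ and $(\abs{m_1},\abs{n_2},p^b)$ playing the corresponding roles.

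At a single prime one unwinds the explicit formula directly. The sum over $C_2$ modulo $p^b$ is a classical Kloosterman sum $S(-m_2C_1,-n_2;p^b)$ (a Ramanujan sum when $b$ is small), and writing $C_1=p^b k+\ell$ one checks that the remaining sum over $\ell$ modulo $p^b$ and over $k$ modulo $p^{a-b}$ collapses, in favourable cases, to a Ramanujan or Gauss sum. When the relevant modulus is $p$ itself one is left with genuine Kloosterman-type sums to which Weil's bound $\abs{S(u,v;q)}\le d(q)(u,v,q)^{1/2}q^{1/2}$, or its extension to the two-variable case, applies; the factor $d(p)^{\varkappa}=3$ is exactly what is needed to absorb the worst configuration, in which the two-variable local sum behaves like the square root of its number of terms rather than splitting into a product. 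When $a\ge 2$ or $b\ge 2$ one instead evaluates the prime-power Kloosterman sums explicitly by $p$-adic stationary phase (Hensel's lemma, or completing the square), obtaining in each case either an exact value of size a root of unity times $p^{k/2}$ or an exact vanishing; these feed into the outer sum, and the two inequalities in the statement correspond to the two ways of choosing which of $p^{a-b}$ and $p^b$ to estimate trivially and which to save on.

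The main obstacle I expect is the bookkeeping in the higher prime-power case: one must separate the regimes $b<a-b$, $b=a-b$, $b>a-b$ (how $c_2$ compares with $c_1/c_2$), keep track of the compatibility constraint $m_2c_1=n_1c_2^2$ and of the coprimality conditions $(p^{a-b},C_1)=(p^b,C_2)=1$, and carry the root-of-unity phases through so that the product over primes reassembles to exactly $d(c_1)(\abs{m_1},\abs{n_2},c_2)c_1$ on the one hand and $d(c_2)^{\varkappa}(\abs{m_1},c_1/c_2)c_2^2$ on the other. Verifying that $\varkappa=\log 3/\log 2$, rather than a larger exponent, is admissible in the worst local configuration is the delicate point; the passage from the local estimates back to the global statement, and the $w_5$ case, are then routine.
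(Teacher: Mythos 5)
The paper does not prove this theorem; it is quoted as a result of Larsen from the appendix to Bump--Friedberg--Goldfeld \cite{BFG01}, so there is no proof in the paper to compare against. Taken as a reconstruction of Larsen's argument, your outline has the right skeleton: the reduction of the $w_5$ sum to the $w_4$ sum via the stated identity is correct and purely formal, multiplicativity for $S_{w_4}$ does follow by the Chinese remainder theorem on the Pl\"ucker coordinates just as it does for $S_{w_l}$, and after reducing to prime-power moduli $(p^a,p^b)$ with $b\le a$ one does recognize the $C_2$-sum as a classical Kloosterman sum $S(-n_2,-m_2C_1;p^b)$ and appeals to Weil's bound at $p$ and to the explicit evaluation of prime-power Kloosterman and Ramanujan sums for higher powers.

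That said, as written this is a sketch rather than a proof, and you say so yourself. The content of Larsen's theorem lies precisely in the part you defer: the case analysis at $(p^a,p^b)$, including the structural dichotomy between $a>b$ (where the coprimality condition $(p^{a-b},C_1)=1$ forces $C_1$ to be a $p$-adic unit) and $a=b$ (where $C_1$ is unrestricted modulo $p^a$) --- note this is a different split from the three regimes you list, which govern only the comparison of $p^b$ with $p^{a-b}$; the bookkeeping needed so that the multiplicativity twist by units reassembles globally into exactly the factors $(\abs{m_1},c_1/c_2)$ and $(\abs{m_1},\abs{n_2},c_2)$; and above all the verification that $\varkappa=\log 3/\log 2$, equivalently $d(2)^\varkappa=3$, really does absorb the worst local configuration. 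Identifying the delicate point is not the same as resolving it: a proof must actually carry out those local estimates and show that nothing worse than the stated bound emerges, and until that is done the argument has a genuine gap exactly where the theorem has its teeth.
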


The Bruhat and compatibility conditions for the long-element Kloosterman sum are vacuously true, and we note that in \thmref{thm:Stevens}, Stevens is completely unconcerned with the dependence of his estimate on the indices $m$ and $n$, so one needs to keep track of this:
In his proof of Theorem (5.9), on page 49, use instead the estimates
\begin{align*}
	\paren{\abs{\nu_1 p^{s-a}}_p^{-1}, \abs{\nu_2' p^{r-b}}_p^{-1}, p^r} \le& \paren{\abs{\nu_1 \nu_2'}_p^{-1},p^r} \paren{p^{s-a}, p^{r-b}} \\
	\le& \paren{\abs{\nu_1 \nu_2'}_p^{-1},p^r} p^{\frac{s-a+r-b}{2}},
\end{align*}
and similarly
\[ \paren{\abs{\nu_2 p^{2r-s-b}}_p^{-1}, \abs{\nu_1' p^{r-a}}_p^{-1}, p^r} \le \paren{\abs{\nu_2 \nu_1'}_p^{-1},p^r} p^{\frac{2r-s-b+r-a}{2}}. \]

The Kuznetsov trace formula relates sums of Fourier coefficients of $SL(2,\Z)$ automorphic forms to sums of the classical Kloosterman sums.
It has two forms:  The first allows an essentially arbitrary test function on the Fourier coefficient side and has transforms of the test function on the Kloosterman sum side; the second form has the test function on the Kloosterman sum side with transforms of it on the Fourier coefficient side.
This is a type of asymmetrical Poisson summation formula, but having both forms allows us to use it in much the same manner -- to study sums of Kloosterman sums using our knowledge of sums of Fourier coefficents of automorphic forms and visa versa.

Starting from a proof of Kuznetsov's trace formula on $SL(2,\R)$ by Zagier, Li has given a generalization of the first form to $SL(n,\R)$; in the case of $SL(3,\R)$, it becomes:
\begin{thm}[Li]
\label{thm:LisKuznetsovFormula}
	Let $\set{\varphi}$ be an orthonormal basis of the $SL(3,\Z)$ cusp forms with Langlands parameters $\mu_\varphi$, and $\set{\phi}$ an orthonormal basis of $SL(2,\Z)$ cusp forms with Langlands parameters $\mu_\phi$.
	Let $k \in C_c^\infty(K\backslash G/K)$, and $m, n$ pairs of non-zero integers.
	Then
	\begin{align}
		\label{eq:LisKuznetsovFormula}
		& \sum_{\varphi} \frac{\hat{k}(\mu_\varphi)}{C(\mu_\varphi)} \rho_\varphi(n) \wbar{\rho_\varphi}(m) \\
		& \qquad +\frac{1}{4\pi i} \sum_{\phi} \int_{\Re(\mu_1)=0} \frac{\hat{k}\paren{\mu_1-\mu_\phi,-2\mu_1}}{C\paren{\mu_1-\mu_\phi,-2\mu_1}} \eta_\phi(n; \mu_1) \wbar{\eta_\phi}(m; \mu_1) d\mu_1 \nonumber \\
		& \qquad +\frac{1}{(2\pi i)^2} \int_{\Re(\mu)=(0,0)} \frac{\hat{k}(\mu)}{C(\mu)} \eta(n; \mu) \wbar{\eta}(m; \mu) d\mu \nonumber \\
		&= \delta_{\abs{m}=\abs{n}} H_I(k, \psi_m,\psi_n,c) \nonumber \\
		& \qquad +\sum_{w\in\set{w_4,w_5,w_l}}\sum_{v\in V} \sum_{c_1,c_2\in\N} S_w(\psi_m,\psi_n^v,c) H_w(k, \psi_m,\psi_n^v,c), \nonumber
	\end{align}
	where
	\[ C(\mu) = \cos \frac{\pi}{2} (\mu_1-\mu_2) \cos \frac{\pi}{2} (\mu_1-\mu_3) \cos \frac{\pi}{2} (\mu_2-\mu_3), \]
	$\rho_\varphi$, $\eta_\phi$, and $\eta$ are the Fourier-Whittaker coefficients of $\varphi$, $E_\phi$, and $E$, respectively, normalized as in \eqref{eq:FourierWhittakerNormalization}, $\hat{k}$ is the generalized Selberg transform of $k$
	\begin{align}
	\label{eq:SphericalTransform}
		\hat{k}(\mu) = \int_{G} k(z) \wbar{p_{\rho+\mu}(z)} dz,
	\end{align}
	and $H_w(k, \psi_m,\psi_n^v,c)$ is given by the integral
	\begin{align}
		& \frac{2\abs{m_1 m_2 n_1 n_2}}{\pi} \int_{Y(\R)} \int_{U(\R)} \int_{\wbar{U}_w(\R)} k\paren{\abs{m}\paren{x t}^{-1} cw(x't)\abs{n}^{-1}} \\
		& \qquad \psi_m(x) \wbar{\psi_n^v(x')} dx'\,dx\,t_1^2 t_2\,dt, \nonumber
	\end{align}
	with $\psi_n^v(x) = \psi_n(vxv)$, and $U(\R)$, $Y(\R)$, $\wbar{U}_w(\R)$ and $dt$ given by eqs. \eqref{eq:IwasawaXY}, \eqref{eq:DegenerateU} and \eqref{eq:IwasawaHaar}.
\end{thm}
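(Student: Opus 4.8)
The plan is to follow Zagier's proof of the $SL(2)$ Kuznetsov formula, generalized by Friedberg and Li, carried out on $SL(3,\R)$ much as in Goldfeld's book. First I would build a Poincar\'e series $P_m(g)=\sum_{\gamma\in U(\Z)\backslash\Gamma}F_m(\gamma g)$ whose seed $F_m(g)=\overline{\psi_m(x)}f_m(y)$ is manufactured out of the point-pair invariant $k$; the crucial device, due to Zagier, is to choose $f_m$ (essentially a Jacquet-type integral of $k$ against a character) so that its Mellin transform against the Whittaker function reproduces the generalized Selberg transform $\hat k$ of \eqref{eq:SphericalTransform} up to an explicit $\mu$-factor. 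I would then compute $\langle P_m,P_n\rangle$ in two ways. Spectrally, expand along the decomposition of $L^2(\Gamma\backslash G/K)$ into cusp forms, the minimal-parabolic Eisenstein series $E(\cdot,\mu)$, the maximal-parabolic series $E_\phi(\cdot,\mu_1)$ twisted by $SL(2,\Z)$ cusp forms, and the residual spectrum; then $\langle P_m,P_n\rangle=\sum_\varphi\langle P_m,\varphi\rangle\overline{\langle P_n,\varphi\rangle}+(\text{Eisenstein})$, and unfolding $\langle P_m,\varphi\rangle$ against $U(\Z)\backslash U(\R)$ and invoking the normalization \eqref{eq:FourierWhittakerNormalization} turns it into $\frac{\overline{\rho_\varphi(m)}}{|m_1m_2|}\int_{Y(\R)}f_m(y)\overline{W^*(|m|y;\mu_\varphi,\psi_{11})}\,dy$, which by the design of $f_m$ is an explicit $\mu$-multiple of $\hat k(\mu_\varphi)$. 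The residual and constant terms have degenerate Whittaker coefficients, so they drop out for non-degenerate $\psi_m,\psi_n$.

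On the geometric side I would unfold one copy of the Poincar\'e series, $\langle P_m,P_n\rangle=\int_{U(\Z)\backslash G}P_m(g)\overline{F_n(g)}\,dg$, and sort the resulting $\gamma$-sum over $U(\Z)\backslash\Gamma/U(\Z)$ by Bruhat cell $w\in W$, writing $\gamma=b_1cwb_2$. For each $w$ the $b_1$-sum combines with the $x$-integral to open it up to all of $U(\R)$; the $b_2$-sum, taken over the double coset $U(\Z)\backslash\Gamma/V\bar U_w(\Z)$ subject to the compatibility condition \eqref{eq:CompatibilityCondition}, collapses into the Kloosterman sum $S_w(\psi_m,\psi_n^v,c)$, the sum over $v\in V$ being exactly the $V$ in that double coset; and what is left — an integral of $k$ over $U(\R)\times\bar U_w(\R)$ against $\psi_m(x)\overline{\psi_n^v(x')}$, with the surviving $Y(\R)$-integration carrying the measure $t_1^2t_2\,dt$ and the prefactor $\frac{2|m_1m_2n_1n_2|}{\pi}$ produced by the Whittaker normalizations — is precisely $H_w(k,\psi_m,\psi_n^v,c)$. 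The identity cell forces $c_1=c_2=1$ and $|m|=|n|$ and gives the diagonal term $\delta_{|m|=|n|}H_I$. Every interchange of sum and integral here is legitimate because $k$ has compact support, so only finitely many $\gamma$ contribute uniformly on compacta.

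Equating the two computations of $\langle P_m,P_n\rangle$ then yields \eqref{eq:LisKuznetsovFormula}, and the main obstacle — as with all Kuznetsov-type formulas — is to make the spectral side emerge with exactly the weight $\hat k(\mu)/C(\mu)$ for an \emph{arbitrary} $k\in C_c^\infty(K\backslash G/K)$. Concretely this needs: (i) the full Langlands spectral decomposition of $L^2(\Gamma\backslash G/K)$ for $SL(3)$, with the correct normalizations of the two Eisenstein families (supplying the measures $\frac1{4\pi i}d\mu_1$ and $\frac1{(2\pi i)^2}d\mu$) and the fact that the residual spectrum is Whittaker-degenerate; (ii) justification of the termwise interchange of the $\varphi$-sum with the $Y(\R)$-integrals, which requires a Weyl-law count of the $\mu_\varphi$ together with polynomial control of $\hat k$ and of the Whittaker integrals on vertical lines; and, hardest, (iii) the exact evaluation of $\int_{Y(\R)}f_m(y)\overline{W^*(|m|y;\mu,\psi_{11})}\,dy$ and of the accompanying Whittaker pairing down to the precise constant $C(\mu)=\cos\frac\pi2(\mu_1-\mu_2)\cos\frac\pi2(\mu_1-\mu_3)\cos\frac\pi2(\mu_2-\mu_3)$. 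It is this last, purely archimedean, computation — where the completion $\Lambda(\mu)$ and the six Gamma factors of $G(u,\mu)$ collapse to the clean trigonometric product via the Mellin-Barnes representation \eqref{eq:WhittakerMellinExpand} and Barnes-type integral evaluations — that is the true heart of the proof.
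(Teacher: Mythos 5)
The paper does not itself prove this theorem; it simply cites Goldfeld's book (which presents Li's proof), flagging two corrections to that exposition (replace $U(\Z)\backslash U(\R)$ by $U(\R)$ in the final formula for $H_w$, and $wc$ by $cw$). Your sketch accurately reproduces the Zagier--Friedberg--Li approach that proof follows: form an automorphic kernel / Poincar\'e object from the point-pair invariant $k$, compute its $(m,n)$ Fourier--Whittaker data spectrally and geometrically (Bruhat), and let the Kloosterman sums fall out of the double-coset decomposition $U(\Z)\backslash\Gamma/V\wbar{U}_w(\Z)$ with the compatibility condition, while the archimedean collapse to the constant $C(\mu)$ is the hard step.

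One small refinement to your account of that last step: in the cited proof the weight $\hat{k}(\mu)$ appears directly from the spectral expansion $K(g,g')=\sum_j\hat{k}(\mu_j)\varphi_j(g)\wbar{\varphi_j(g')}+\cdots$, and the factor $1/C(\mu)$ comes from integrating the product $W^*(t,-\mu,\psi_{11})W^*(t,\mu,\psi_{11})$ over $Y(\R)$ via Stade's formula (Theorem~\ref{thm:StadesFormula}), after dividing by the completion factors $\Lambda(\pm\mu)$; the present paper in fact revisits exactly this step in Section~\ref{sec:TheGeneralTerm}, replacing Stade at $s=1$ with $s=1+\Delta$ to obtain better holomorphy. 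So the trigonometric product $C(\mu)$ emerges from the Stade Gamma-product rather than from a Mellin transform of a single seed $f_m$ against one Whittaker function. That is a phrasing issue rather than a substantive gap, and the overall plan is the one the paper points to.
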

The proof of this is given in Goldfeld's book, with two corrections:
The final formula for $H_w$ should have $U(\R)$ in place of $U(\Z)\backslash U(\R)$ and $cw$ in place of $wc$.

Note that we have defined our Kloosterman sums $S_w$ to be zero if the compatibility condition \eqref{eq:CompatibilityCondition} is not met.
This is possible because only the well-defined terms occur in the Fourier coefficients of Poincar\'e series (see \cite[Table (5.4)]{BFG01} for $SL(3)$, \cite[pp173-4]{Friedberg} for $SL(n)$), and hence also in Kuznetsov formula.
One may verify from the compatibility conditions listed in \eqref{eq:w4Compatibility}, \eqref{eq:w5Compatibility}, and \tableref{tab:DegenerateKloostermanSums} -- vacuous for the long element -- that the trivial element $I$, long element $w_l$ and the two intermediate elements $w_4$ and $w_5$ are the only Weyl elements which appear for non-degenerate characters on $SL(3,\Z)$.

The test function $k$ appears on the Fourier coefficient side of the formula as the Selberg transform $\hat{k}$, but this has an inversion formula, called spherical inversion, which allows us to replace $\hat{k}$ with an essentially arbitrary test function, and so this formula generalizes the first form of Kuznetsov's formula on $SL(2,\R)$.

We define two spaces:
The first space is the Harish-Chandra Schwartz space $\mathrm{HCS}(K\backslash G/K)$ of functions $f:G\to\C$ which are bi-$K$-invariant, smooth in the coordinates of $G$, and for each bi-$K$-invariant differential operator $D$ on $G$ and any $N\in\N$, we have
\[ \abs{f(a)} \ll_{D,N} \frac{h_0(a)}{(1+\abs{\log p_{11}(a)})^N}, \]
for all positive diagonal $a$.
The second space is the Schwartz space $\mathrm{SCH}^W(\mu)$ of real-analytic functions on $\Re(\mu) = 0$ which are invariant under the action of the Weyl group (which acts by permutations of the coordinates of $\mu$).
Then we may state the spherical inversion formula for $SL(3,\R)$ as:
\begin{thm}[Spherical Inversion]
	The Selberg transform $k \mapsto \hat{k}$ in \eqref{eq:SphericalTransform} extends to an isomorphism
	\[ \mathrm{HCS}(K\backslash G/K) \stackrel{\sim}{\to} \mathrm{SCH}^W(\mu), \]
	with inverse given by
	\[ k(z) = -\frac{1}{64\pi^4} \int_{\Re(\mu)=(0,0)} \frac{\hat{k}(\mu)}{\abs{c_3(\mu)}^2} h_\mu(z) d\mu, \]
	where
	\begin{align}
	\label{eq:c3tandef}
		\abs{c_3(\mu)}^{-2} = \frac{4}{\pi^2} \prod_{1\le i< j\le 3} -\frac{\pi}{2}(\mu_i-\mu_j)\tan\frac{\pi}{2}(\mu_i-\mu_j),
	\end{align}
	and
	\[ h_\mu(z) = \int_K p_{\rho+\mu}(kz) dk, \quad \int_K dk = 1, \]
	is the spherical function.
\end{thm}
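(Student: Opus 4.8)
The plan is to recognize this statement as the Harish-Chandra spherical inversion and Plancherel theorem for the rank-two Riemannian symmetric space $X = SL(3,\R)/SO(3,\R)$, with the Plancherel density $\abs{c_3(\mu)}^{-2}$ written out explicitly, and to deduce it from the general theory (as developed in Terras \cite{T02} and Jorgensen--Lang \cite{JL01}, or the treatments of Helgason and of Gangolli--Varadarajan) together with an explicit computation of the Harish-Chandra $c$-function. First I would check that on bi-$K$-invariant $k$ the Selberg transform \eqref{eq:SphericalTransform} is a normalization of the spherical Fourier transform: averaging the character $p_{\rho+\mu}$ over $K$ on the left, using bi-$K$-invariance of $k$, and noting that $\wbar{p_{\rho+\mu}(z)} = p_{\rho-\mu}(z)$ on the contour $\Re(\mu)=0$, one gets
\[ \hat{k}(\mu) = \int_G k(z)\, h_{-\mu}(z)\, dz, \]
which is the Harish-Chandra spherical transform after the substitution $\mu = i\lambda$. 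The restricted root system of $X$ is $A_2$, with every restricted root of multiplicity one and no root of the form $2\alpha$, so the theory applies in its simplest form.

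Second, I would invoke the spherical Plancherel theorem: the map $k\mapsto\hat{k}$ is a topological isomorphism from the Harish-Chandra Schwartz space onto the space of Weyl-invariant Schwartz functions on $i\mathfrak{a}^*$, with inversion $k(z) = \mathrm{const}\cdot\int_{\Re(\mu)=0}\hat{k}(\mu)\,h_\mu(z)\,\abs{c(\mu)}^{-2}\,d\mu$, the factor $1/\abs{W} = 1/6$ being absorbed into the constant. Matching the abstract function spaces to those in the statement is routine once one recognizes $h_0(a)$ as the elementary spherical function at $\mu=0$ and $\log p_{11}(a)$ as the relevant linear coordinate on the closed positive Weyl chamber, so $\mathrm{HCS}(K\backslash G/K)$ as defined in the text is the usual Harish-Chandra Schwartz space, and $\mathrm{SCH}^W(\mu)$ is exactly the target space of the Paley--Wiener--Schwartz statement.

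Third, I would make $c(\mu)$ explicit via the Gindikin--Karpelevich formula. Since the positive roots of $A_2$ are the $\alpha_{ij}$ with $i<j$, each of multiplicity one, $c(\mu)$ factors as a product of three one-variable factors, each a ratio of Gamma functions in $\mu_i-\mu_j$; on $\Re(\mu)=0$ the reflection formula $\Gamma(s)\Gamma(1-s)=\pi/\sin\pi s$ together with the duplication formula collapse $\abs{c_{\alpha_{ij}}(\mu)}^{-2}$ to a constant multiple of $-\tfrac{\pi}{2}(\mu_i-\mu_j)\tan\tfrac{\pi}{2}(\mu_i-\mu_j)$, which is \eqref{eq:c3tandef} up to the overall constant.

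The one genuinely delicate point, and where I expect the real work to lie, is pinning down the absolute constants $\tfrac{4}{\pi^2}$ and $-\tfrac{1}{64\pi^4}$, since each reference fixes the Haar measures on $G$ and $K$, the measure on $i\mathfrak{a}^*$, and the normalization of the $c$-function differently, and all of these choices feed into the final constant. Rather than tracking every normalization through the Gindikin--Karpelevich and Plancherel computations, I would fix the constants by internal consistency: either insert into the inversion formula a $k$ whose spherical transform is elementary, or, more in the spirit of this paper, match against the double Mellin--Barnes pair \eqref{eq:WhittakerMellinExpand} for $W^*(y,\mu,\psi_{11})$ --- whose normalization is tied to that of $h_\mu$ through the Jacquet integral defining the Whittaker function --- which forces $\tfrac{4}{\pi^2}$ and hence $-\tfrac{1}{64\pi^4}$. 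It is this constant-bookkeeping, not the structural input, that requires care.
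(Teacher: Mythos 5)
Your proposal is correct and matches the paper's approach: the paper likewise presents this theorem as a citation of Harish-Chandra's spherical inversion (pointing to Terras [T02, p.\,100, eq.\,3.23]) and devotes its discussion entirely to the normalization constants you flag as the delicate point. The only difference is that you propose to fix the constants indirectly by testing an elementary $k$ or matching the Whittaker normalization, whereas the paper tracks them directly from Terras' conventions, explaining the factor $b_n$ (from the $G=KAK$ decomposition and Weyl-chamber choice) and the factor $n/2^{n-1}$ (from the substitution $2s_i = (\rho_i+\mu_i)-(\rho_{i+1}+\mu_{i+1})$).
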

The notation $\abs{c_3(\mu)}^{-2}$ may be somewhat confusing, but this function was originally defined as the squared modulus of a product of beta functions.
The equality \eqref{eq:c3tandef} technically only holds on the line $\Re(\mu) = (0,0)$, but in practice when we use this function, we mean the holomorphic function of $\mu$ on the right.
The most explicit reference here is \cite[p. 100 eq. 3.23]{T02}, and the leading constant is just
\[ \frac{n}{2^{n-1}} (2\pi i) \omega_n = \frac{\prod_{j=1}^n \Gamma\paren{\frac{j}{2}}}{(2\pi i)^{n-1} (n-1)! \, \pi^{\frac{n(n+1)}{4}}}, \]
as the extra $b_n$ appearing in \cite{T02} comes from decomposing $G=KAK$, with $A$ the diagonal matrices, and then choosing a Weyl chamber.
The extra $\frac{n}{2^{n-1}}$ comes from the $s \mapsto \mu$ substitution: $2s_i = (\rho_i+\mu_i)-(\rho_{i+1}+\mu_{i+1})$.

To apply the Kuznetsov formula, we will need to know when the various sums and integrals converge.
From the Weil-quality bounds on the Kloosterman sums -- \thmref{thm:Stevens} and \thmref{thm:Larsen} -- we may investigate the absolute convergence of the corresponding Kloosterman zeta functions:
\begin{cor}
\begin{enumerate}
	\item \[ \sum_{c_1, c_2 \in \N} \frac{\abs{S_{w_4}(\psi_m,\psi_n,c))}}{c_1 c_2} c_2^{3 u} \]
		converges on $u < 0$,
	\item \[\sum_{c_1, c_2 \in \N} \frac{\abs{S_{w_5}(\psi_m,\psi_n,c))}}{c_1 c_2} c_1^{3 u} \]
		converges on $u < 0$, and
	\item \[ \sum_{c_1, c_2 \in \N} \frac{\abs{S_{w_l}(\psi_m,\psi_n,c)}}{c_1 c_2} \paren{\frac{c_1^2}{c_2}}^{u_1} \paren{\frac{c_2^2}{c_1}}^{u_2} \]
		converges on $2u_1-u_2 < -\frac{1}{2}$, $-u_1+2u_2 < -\frac{1}{2}$.
\end{enumerate}
\label{cor:AbsConvofKloostermanZetaFuncs}
\end{cor}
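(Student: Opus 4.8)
The plan is to substitute the Weil-quality bounds of \thmref{thm:Stevens} and \thmref{thm:Larsen} into the three Kloosterman zeta functions and to reduce the resulting estimates to products of one-variable Dirichlet series. Throughout I take $\psi_m$ and $\psi_n$ non-degenerate, so that $m_1,m_2,n_1,n_2$ are all nonzero; this is the only case that occurs in the Kuznetsov formula.

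For part (1), I would first observe that the compatibility condition \eqref{eq:w4Compatibility}, namely $m_2 c_1 = n_1 c_2^2$, forces $c_1 = n_1 c_2^2/m_2$ whenever $S_{w_4}$ is nonzero; since $m_2 n_1 \ne 0$, a given $c_2$ then contributes at most one term, with $c_1 \asymp_{m,n} c_2^2$, so the double sum reduces to a sum over $c_2$. Feeding \thmref{thm:Larsen} --- which in particular gives $\abs{S_{w_4}(\psi_m,\psi_n,c)} \le d(c_1)\paren{\abs{m_1},\abs{n_2},c_2}c_1 \ll_{m,n,\epsilon} c_1^{1+\epsilon}$ --- into the zeta function, and using $c_1 \ll_{m,n} c_2^2$, the general term is
\[ \frac{\abs{S_{w_4}(\psi_m,\psi_n,c)}}{c_1 c_2} c_2^{3u} \ll_{m,n,\epsilon} \frac{c_1^\epsilon}{c_2} c_2^{3u} \ll_{m,n,\epsilon} c_2^{3u-1+2\epsilon}, \]
and $\sum_{c_2\ge 1} c_2^{3u-1+2\epsilon}$ converges as soon as $3u+2\epsilon<0$, which for small $\epsilon$ covers every $u<0$. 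Part (2) is identical after interchanging $c_1$ and $c_2$, either by running the same argument with part (2) of \thmref{thm:Larsen} and the compatibility condition \eqref{eq:w5Compatibility}, or by applying part (1) to the identity $S_{w_5}(\psi_m,\psi_n,(c_1,c_2)) = S_{w_4}(\psi_{-m_2,m_1},\psi_{n_2,-n_1},(c_2,c_1))$.

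For part (3) the Bruhat and compatibility conditions are vacuous, so all $(c_1,c_2)$ contribute, and I would parametrize by the greatest common divisor: set $d=(c_1,c_2)$ and $c_1 = d a_1$, $c_2 = d a_2$ with $(a_1,a_2)=1$, so that $c_1 c_2 = d^2 a_1 a_2$ and $D = c_1 c_2/(c_1,c_2) = d a_1 a_2$. Bounding $\paren{\abs{m_1 n_2},D}\paren{\abs{m_2 n_1},D} \le \abs{m_1 m_2 n_1 n_2}$ and the divisor factors by $(c_1 c_2)^\epsilon$ in \thmref{thm:Stevens} gives
\[ \abs{S_{w_l}(\psi_m,\psi_n,c)} \ll_{m,n,\epsilon} (c_1 c_2)^\epsilon\, d^{3/2}(a_1 a_2)^{1/2}. \]
Substituting this and the expansions of $c_1,c_2$ into the zeta function, the general term is at most a constant (depending on $m,n,\epsilon$) times the monomial
\[ d^{-\frac12+u_1+u_2+2\epsilon}\, a_1^{-\frac12+2u_1-u_2+\epsilon}\, a_2^{-\frac12-u_1+2u_2+\epsilon}. \]
Dropping the condition $(a_1,a_2)=1$ only enlarges the sum, so convergence holds once each exponent lies below $-1$: the $a_1$ and $a_2$ conditions read $2u_1-u_2<-\tfrac12-\epsilon$ and $-u_1+2u_2<-\tfrac12-\epsilon$, and adding them gives $u_1+u_2<-1$, which already forces $-\tfrac12+u_1+u_2+2\epsilon<-1$ for $\epsilon$ small, so the $d$-exponent condition is automatic. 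Letting $\epsilon\to 0$ recovers exactly the region $2u_1-u_2<-\tfrac12$, $-u_1+2u_2<-\tfrac12$.

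The one point that needs care is the greatest common divisor in Stevens' bound: the trivial estimate $(c_1,c_2)\le(c_1 c_2)^{1/2}$ would yield only the smaller region $2u_1-u_2<-\tfrac34$, $-u_1+2u_2<-\tfrac34$, so $d=(c_1,c_2)$ must be retained as an independent summation variable, and one exploits that the constraint it imposes on the exponent of $d$ follows from those on the exponents of $a_1$ and $a_2$. Everything else is routine bookkeeping.
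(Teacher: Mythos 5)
Your proof is correct and is essentially the argument the paper intends (the corollary is stated with no separate proof, as an immediate consequence of the Weil--quality bounds of \thmref{thm:Stevens} and \thmref{thm:Larsen}). In particular, you correctly exploit the compatibility condition \eqref{eq:w4Compatibility} (resp.\ \eqref{eq:w5Compatibility}) to collapse the $w_4$ (resp.\ $w_5$) sum to a single variable, and the substitution $c_1=da_1$, $c_2=da_2$ with $d=(c_1,c_2)$ for $w_l$ is exactly what the choice of normalizing exponents $\paren{c_1^2/c_2}^{u_1}\paren{c_2^2/c_1}^{u_2}$ is designed to accommodate, so that each exponent drops below $-1$ precisely on the region $2u_1-u_2<-\tfrac12$, $-u_1+2u_2<-\tfrac12$.
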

These normalizations are unusual, but will make more sense once we start evaluating the integral transforms.

A simple consequence of the Kuznetsov formula are some mean value estimates for Fourier-Whittaker coefficients of $SL(3,\Z)$ automorphic forms, which will help us evaluate convergence of the spectral side of the Kuznetsov formula:
\begin{thm}[Blomer]
\label{thm:MeanValueEstimates}
	For $\mu^\dagger$ and $T \ge 1$ fixed, the quantities
	\[ \sum_{\norm{\mu_\varphi-\mu^\dagger}\le T} \frac{\abs{\rho_{\varphi}(1,1)}^2}{C(\mu_\varphi)}, \]
	\[ \sum_{\phi} \underset{\substack{\Re(\mu_1)=0 \\ \norm{\paren{\mu_1-\mu_\phi,-2\mu_1}-\mu^\dagger} \le T}}{\int} \frac{\abs{\eta_\phi((1,1); \mu_1)}^2}{C\paren{\mu_1-\mu_\phi,-2\mu_1}} d\mu_1, \]
	and
	\[ \underset{\substack{\Re(\mu)=(0,0) \\ \norm{\mu-\mu^\dagger}\le T}}{\int} \frac{\abs{\eta((1,1); \mu)}^2}{C(\mu)} d\mu \]
	are all bounded by
	\[ T^2 \paren{T+\norm{\mu^\dagger}}^3. \]
\end{thm}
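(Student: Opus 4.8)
I would deduce all three bounds at once from \thmref{thm:LisKuznetsovFormula} with $m = n = (1,1)$ by the positivity-plus-amplification device. Fix $\mu^\dagger$ and $T$ and take the test function $k$ to be a convolution square of a fixed even function in $C_c^\infty(K\backslash G/K)$ (or, after extending \eqref{eq:LisKuznetsovFormula} to $\mathrm{HCS}(K\backslash G/K)$ by density, of a suitable function in $\mathrm{HCS}$), so that its Selberg transform has the form $\hat{k}(\mu) = \hat\kappa(\mu)^2$ with $\hat\kappa$ Weyl-invariant; one chooses $\hat\kappa$ to be a bump of width $T$ concentrated on the Weyl orbit of $\mu^\dagger$, so that $\hat{k}(\mu)\gg 1$ for $\norm{\mu-\mu^\dagger}\le T$ and $\hat k$ decays rapidly away from that orbit. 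Since the $SL(3,\Z)$ and $SL(2,\Z)$ spectral parameters occurring in the three terms on the left of \eqref{eq:LisKuznetsovFormula} are self-dual (i.e. $-\wbar{\mu}$ is a permutation of $\mu$) and $\hat\kappa$ is Weyl-invariant, $\hat{k}$ is non-negative at each of them; and a short computation using the bounds $\abs{\Re(\mu_i)}\le\frac{5}{14}$ (for $SL(3,\Z)$ cusp forms) together with the corresponding $SL(2,\Z)$ bound shows that $C(\mu) > 0$ at each such parameter as well. Hence the three spectral terms are individually non-negative, and each of the three quantities in the statement is therefore $\ll$ the whole left-hand side of \eqref{eq:LisKuznetsovFormula}, hence $\ll$ its geometric side.

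It remains to bound the geometric side by $T^2\paren{T+\norm{\mu^\dagger}}^3$. The diagonal term $\delta_{\abs{m}=\abs{n}}H_I(k,\psi_{11},\psi_{11},c)$ is the main term: unwinding the definition of $H_I$ and applying spherical inversion, it reduces to (a constant multiple of) the Plancherel mass $\int_{\Re(\mu)=(0,0)}\hat{k}(\mu)\abs{c_3(\mu)}^{-2}\,d\mu$. On $\Re(\mu)=0$ we have $\abs{c_3(\mu)}^{-2}\ll\prod_{1\le i<j\le 3}\paren{1+\abs{\mu_i-\mu_j}}\ll\paren{1+\norm{\mu}}^3$, so, using that $\hat{k}$ is essentially supported in $\norm{\mu-\mu^\dagger}\ll T$, this integral is $\ll T^2\paren{T+\norm{\mu^\dagger}}^3$, as required.

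The real work is to show that the Kloosterman contribution $\sum_{w\in\set{w_4,w_5,w_l}}\sum_{v\in V}\sum_{c_1,c_2\in\N}\abs{S_w(\psi_m,\psi_n^v,c)}\,\abs{H_w(k,\psi_m,\psi_n^v,c)}$ does not exceed the diagonal term. I would bound the Kloosterman sums via \thmref{thm:Stevens} and \thmref{thm:Larsen}, so that the $c$-sums are controlled by the Kloosterman zeta functions of \corref{cor:AbsConvofKloostermanZetaFuncs}; the point is then that $H_w$ decays in $c_1$ and $c_2$ fast enough to place these sums strictly inside the regions of absolute convergence given there, with a prefactor at most $T^2(T+\norm{\mu^\dagger})^3$. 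That decay comes from the essentially compact support of $k$: if $cw$ is large along some root direction, the matrix argument of $k$ in the definition of $H_w$ is large unless the $t$- and $x,x'$-integrations are confined to a correspondingly thin region, and integration by parts against $\psi_m$ and $\psi_n^v$ converts that confinement into negative powers of $c_1,c_2$; the dependence on $T$ and $\mu^\dagger$ enters only through sup-norm bounds on $k$ and its derivatives, which spherical inversion controls by fixed powers of $T+\norm{\mu^\dagger}$.

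The main obstacle is exactly this last estimate: one must control the $c_1,c_2$-decay of the transforms $H_w$ carefully and uniformly in $T$ and $\mu^\dagger$, so that the Kloosterman terms are genuinely \emph{dominated} by the diagonal term and not merely summable. This is the analytic heart of the matter; the normalization differences between Li's formula and Blomer's version in \cite{Bl01}, in which an estimate of this kind was first obtained, are only bookkeeping.
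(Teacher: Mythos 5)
Your positivity-plus-diagonal strategy is exactly the one the paper sketches, but you route it through a different version of the Kuznetsov formula, and the resulting gap is in a different place. The paper does not prove this directly from Li's formula \eqref{eq:LisKuznetsovFormula}: it says to apply the \emph{simplified} form, \thmref{thm:KuznetsovSimplification}, where the arithmetic weights $H_w$ have already been rewritten as contour integrals $\frac{1}{(2\pi i)^2 c_1 c_2}\int \hat{k}(\mu)J_{w,\mu}(\cdots)\,d\mu$; the needed decay in $c_1, c_2$ is then read off from the $\abs{y}$-powers in \propref{prop:JwHoloAndBd} (e.g.\ $J_{w_l,\mu}(y)\ll\abs{y_1}^{-\Re\mu_1}\abs{y_2}^{\Re\mu_2}M_\text{sym}(\cdots)$) combined with the absolute convergence region of \corref{cor:AbsConvofKloostermanZetaFuncs}, and the $T^2(T+\norm{\mu^\dagger})^3$ prefactor falls out of the polynomial-in-$\mu$ factors there. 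You instead work from the raw Li formula and propose to extract the $c_1,c_2$-decay of $H_w$ directly by integration by parts against $\psi_m,\psi_n^v$ using the compact support of $k$.

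Both of your preliminary reductions are sound: the choice of $k$ as a convolution square (or the [DKV01]-type construction the paper invokes) gives $\hat{k}\ge 0$ on the self-dual spectrum, the positivity of $C(\mu)$ at the relevant parameters is correct given Kim--Sarnak, and your evaluation of the diagonal term as a Plancherel mass of size $T^2(T+\norm{\mu^\dagger})^3$ matches the paper's $H_I$ term. Where your proposal genuinely falls short of a proof is the step you label ``the analytic heart of the matter.'' Working directly with the definition of $H_w$ — a triple integral over $Y(\R)\times U(\R)\times\wbar{U}_w(\R)$ of $k$ at the argument $\abs{m}(xt)^{-1}cw(x't)\abs{n}^{-1}$ — the integration-by-parts argument is not a short computation: one must track how the stationary structure of this argument depends on $c_1, c_2, T, \mu^\dagger$ simultaneously, and it is exactly to avoid this that the paper spends sections \ref{sec:IntegralTransforms}--\ref{sec:JwlMellinBarnes} converting $H_w$ to Mellin--Barnes form and proving \propref{prop:JwHoloAndBd}. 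So your outline is plausible and in the same spirit, but it asserts rather than establishes the one estimate that carries all the difficulty; the paper's own route makes that estimate a citation to machinery it builds elsewhere. If you want to complete your version, you would essentially have to redo the analysis of the $H_w$ transforms from scratch, at which point you would likely find yourself reproducing the content of \propref{prop:JwHoloAndBd}.
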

This is essentially a theorem of Blomer, but one can obtain these by applying \thmref{thm:KuznetsovSimplification} with a test function $\hat{k}$ which is non-negative on the spectrum and decays rapidly away from the desired regions then applying bounds for the $J_{w,\mu}$ functions.
Such test functions are constructed in \cite{DKV01} -- see \cite{Li01}, and we will prove bounds of the requisite nature in \propref{prop:JwHoloAndBd}.
Blomer demonstrates results of this type by applying a Kuznetsov formula of his own, the purpose of which is the same; that is, to make a sufficiently simplified version of the Kuznetsov formula on $SL(3,\R)$.
One can extend this to the $m \ne (1,1)$ Fourier coefficents by applying the second half of the Kim-Sarnak result, $\frac{\rho_\phi(m)}{\rho_\phi(1,1)} \ll (m_1 m_2)^{\frac{5}{14}+\epsilon}$, or by applying the Kuznetsov formula directly, which results in a slightly different bound.

\section{Methods}
\label{sec:Methods}
We start with Li's generalization of the Kuznetsov formula, whose complexity leads us to our first technical theorem.
To that end, we give some preliminary definitions:
First, set $G^*(u,\mu) = \frac{G(u,\mu)}{\Lambda(\mu)}$, and let $G^*_l(u_2,\mu)$ be its residue at $u_1 = \mu_1$, $G^*_r(u_1,\mu)$ the residue at $u_2=-\mu_2$, and $G^*_b(\mu)$ the double residue.
For $w\in W$, $x' \in U(\R)$, and $t \in Y(\R)$, set $x^* y^* \equiv w x' \pmod{K}$, and also $t^w = w t w^{-1}$.
Then we define an action of $w$ on exponents $u$ with $u^w$ defined by $t_1^{u^w_1} t_2^{u^w_2} = (t^w_1)^{u_1} (t^w_2)^{u_2}$.
In particular, $w$ acts by permutations on $\mu$ as in the usual definition $p_{\mu^w}(t) = p_\mu(t^w) = p_\mu(w t)$.

Fix $\Delta > 0$, and for $y\in (\R\setminus\set{0})^2$, $w=w_4,w_5,w_l$ define the functions
\begin{align}
\label{eq:JwDef}
	J_{w,\mu}(y) =& \frac{1}{16 \pi^3} \frac{k_\text{adj}(\mu)}{\abs{c_3(\mu)}^2} \Biggl(\frac{-1}{16 \pi^4} \int_{\Re(u) = -\frac{1}{2}-10\epsilon} G^*(u,\mu) T_w(u,y) du \\
	& +\frac{-3i}{8 \pi^3} \int_{\Re(u_2) = -\frac{1}{2}-10\epsilon} G^*_l(u_2,\mu) T_w((\mu_1,u_2),y) du_2 \nonumber\\
	& +\frac{-3i}{8 \pi^3} \int_{\Re(u_1) = -\frac{1}{2}-10\epsilon} G^*_r(u_1,\mu) T_w((u_1,-\mu_2),y) du_1 \nonumber\\
	& +\frac{6}{4 \pi^2} G^*_b(\mu) T_w((\mu_1,-\mu_2),y) \Biggr), \nonumber
\end{align}
\begin{align}
\label{eq:TwFinal}
	T_w(u,y) =& (\pi\abs{y_1})^{-u_1} (\pi\abs{y_2})^{-u_2} \int_{Y(\R)} W(t,-\mu,\psi_{11}) X'_w(u, y, t) \\
	& \qquad t_1^{3-u^w_1+2\Delta} t_2^{2-u^w_2+\Delta} \, dt, \nonumber
\end{align}
\begin{align*}
	X'_w(u,y,t) =& \int_{\wbar{U}_w(\R)} \psi_{y t^w}(x^*)\wbar{\psi_{t}}(x') {y_1^*}^{1-u_1} {y_2^*}^{1-u_2} dx'.
\end{align*}
Here the function $\abs{c_3(\mu)}^2$ is the holomorphic function defined by \eqref{eq:c3tandef}, $Y(\R)$ and $\wbar{U}_w(\R)$ are defined by \eqref{eq:IwasawaY} and \eqref{eq:DegenerateU}, and
\begin{align}
\label{eq:kadjdef}
	k_\text{adj}(\mu) = \prod_{j<k} \paren{(3+2\Delta)^2-\paren{\mu_j-\mu_k}^2}^{-\Delta/2},
\end{align}
is a minor correction factor, whose existence will be justified later; to be precise, we take the power function to be real on the real axis with the branch outside the strip $\abs{\Re(\mu_i)} < 1+\Delta$.
For clarity, we will explicitly compute most of the abstractly defined quantities given here in section \ref{sec:KSimpWeylTerms}.

For the identity Weyl element, a.k.a. the trivial term, we set
\begin{align}
\label{eq:JIDef}
	J_I(\mu) = \frac{\prod_{j<k} -\frac{\pi}{2}(\mu_j-\mu_k) \sin \frac{\pi}{2}(\mu_j-\mu_k)}{C^*(\mu)},
\end{align}
where
\[ C^*(\mu) = \pi^{3/2} \Gamma\paren{\frac{1+\Delta}{2}}^{-3} \prod_{j<k} \frac{\paren{(3+2\Delta)^2-\paren{\mu_j-\mu_k}^2}^{\Delta/2}}{\Gamma\paren{\frac{1+\Delta+\mu_j-\mu_k}{2}}\Gamma\paren{\frac{1+\Delta+\mu_k-\mu_j}{2}}}. \]

With the functions defined above, in section \ref{sec:IntegralTransforms} we prove:
\begin{thm}
\label{thm:KuznetsovSimplification}
	Fix $\Delta > 0$ and let $\hat{k}(\mu)$ be symmetric in $(\mu_1,\mu_2,\mu_3)$, $\mu_3=-\mu_1-\mu_2$, holomorphic in each variable on $\Re(\mu_1,\mu_2) = \eta \in \braces{-\frac{1}{2} - \Delta, \frac{1}{2}+\Delta}^2$, of sufficient decay that the integral
	\begin{align}
		\label{eq:KuzSimplkBd}
		& \int_{\Re(\mu)=\eta} \abs{\hat{k}(\mu)} \abs{\mu_1-\mu_2}^{\frac{27+4\Delta}{16}+\epsilon} \abs{\mu_1-\mu_3}^{\frac{27+4\Delta}{16}+\epsilon} \abs{\mu_2-\mu_3}^{\frac{1-4\Delta}{8}+\epsilon} \abs{d\mu},
	\end{align}
	converges, then we have the formula \eqref{eq:LisKuznetsovFormula}, where now
	\begin{align*}
		H_I(k,\psi_m,\psi_n,c) =& -\frac{1}{32 \pi^8} \int_{\Re(\mu)=(0,0)} \hat{k}(\mu) J_I(\mu) d\mu, \\
		H_{w_4}(k,\psi_m,\psi_n,c) =& \frac{1}{(2 \pi i)^2 c_1 c_2} \int_{\Re(\mu)=\eta} \hat{k}(\mu) J_{w_4,\mu}\paren{\frac{m_1 m_2^2 n_2}{c_2^3 n_1}} \, d\mu, \\
		H_{w_5}(k,\psi_m,\psi_n,c) =& \frac{1}{(2 \pi i)^2 c_1 c_2} \int_{\Re(\mu)=\eta} \hat{k}(\mu) J_{w_5,\mu}\paren{\frac{m_1^2 m_2 n_1}{c_1^3 n_2}} \, d\mu, \\
		H_{w_l}(k,\psi_m,\psi_n,c) =& \frac{1}{(2 \pi i)^2 c_1 c_2} \int_{\Re(\mu)=\eta} \hat{k}(\mu) J_{w_l,\mu}\paren{\frac{c_2 m_1 n_2}{c_1^2}, \frac{c_1 m_2 n_1}{c_2^2}} \, d\mu,
	\end{align*}
	with $J_{w,\mu}$ given by the integrals \eqref{eq:JwDef}, \eqref{eq:TwFinal},\eqref{eq:XwlFinal}, \eqref{eq:Xw4Final}, and \eqref{eq:Xw5Final}, $J_I(\mu)$ given by \eqref{eq:JIDef}, and $C(\mu)$ replaced with $C^*(\mu)$.
\end{thm}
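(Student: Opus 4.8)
The plan is to deduce \thmref{thm:KuznetsovSimplification} from Li's formula (\thmref{thm:LisKuznetsovFormula}) in two moves. First, for a nice test function $k\in C_c^\infty(K\backslash G/K)$, I would rewrite each Kloosterman--side integral $H_w(k,\psi_m,\psi_n^v,c)$ of \thmref{thm:LisKuznetsovFormula} explicitly as $\frac{1}{(2\pi i)^2 c_1 c_2}\int_{\Re(\mu)=\eta}\hat k(\mu) J_{w,\mu}(\cdot)\,d\mu$ (and the trivial term as the claimed $J_I$-integral), at the cost of replacing $C(\mu)$ by $C^*(\mu)$. Second, I would extend this identity — and hence the whole formula \eqref{eq:LisKuznetsovFormula} in its new form — from $k\in C_c^\infty$ to every $\hat k$ satisfying the decay hypothesis \eqref{eq:KuzSimplkBd}, by a density-and-continuity argument.

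For the computation I would begin with spherical inversion, writing $k(z) = -\frac{1}{64\pi^4}\int_{\Re(\mu)=(0,0)}\frac{\hat k(\mu)}{\abs{c_3(\mu)}^2}h_\mu(z)\,d\mu$ with $h_\mu(z)=\int_K p_{\rho+\mu}(\kappa z)\,d\kappa$, and inserting this into the defining triple integral of $H_w$ over $Y(\R)\times U(\R)\times\wbar{U}_w(\R)$. The inner $K$-integral together with the $U(\R)$-integral against $\psi_m$ on the left argument $\abs{m}(xt)^{-1}$ collapses, via the integral representation of the Jacquet--Whittaker function and the scaling identity $W(y;\mu,\psi_{t_1 t_2})=p_{-\rho-\mu^{w_l}}(\abs t)W(\abs t y;\mu,\psi_{11})$ recorded in \secref{sec:Background} (which also absorbs the $v\in V$ signs), into the explicit factor $W(t,-\mu,\psi_{11})$ of \eqref{eq:TwFinal}; the $-\mu$ and permutation reflect the conjugation implicit in the $m$-side pairing. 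On the right, the $\wbar{U}_w(\R)$-integral against $\wbar{\psi_n^v(x')}$, of the Iwasawa $y$-part $y^*=y^*(x')$ of $wx'$, produces the second Whittaker function $W^*(y^*;\mu,\psi_{11})$; replacing it by its double Mellin--Barnes representation \eqref{eq:WhittakerMellinExpand} is precisely what introduces $G(u,\mu)$ and the factors $(\pi y_i^*)^{1-u_i}$, i.e. the structure of $X'_w(u,y,t)$. Tracking the diagonal conjugations by $c$ and the rescalings by $\abs m$, $\abs n$ yields the arguments $\frac{m_1 m_2^2 n_2}{c_2^3 n_1}$, $\frac{m_1^2 m_2 n_1}{c_1^3 n_2}$, $\paren{\frac{c_2 m_1 n_2}{c_1^2},\frac{c_1 m_2 n_1}{c_2^2}}$ for $w_4,w_5,w_l$, and assembling the pieces gives $T_w(u,y)$ and then $J_{w,\mu}(y)$; the differing shapes of $\wbar{U}_w$ for the three Weyl elements are what make the corresponding integrals for $X'_w$ explicit.

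Two features of \eqref{eq:JwDef} must be produced along the way. The four summands — a main term plus three residues involving $G^*_l$, $G^*_r$, $G^*_b$ — appear when one moves the Mellin contour in \eqref{eq:WhittakerMellinExpand} from $\Re(u)=(2,2)$ leftward to $\Re(u)=-\tfrac12-10\epsilon$, crossing exactly the poles of $G(u,\mu)$ at $u_1=\mu_1$ and at $u_2=-\mu_2$ (the remaining poles $u_1=\mu_2,\mu_3$ and $u_2=-\mu_1,-\mu_3$ staying on the far side because of the ordering forced on the shifted contour $\Re(\mu)=\eta$); this leftward shift is what makes the subsequent $t$- and $\mu$-integrals converge. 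The factor $k_\text{adj}(\mu)$ and the passage from $C(\mu)$ to $C^*(\mu)$ encode a smoothing device: the $t$-integral in $T_w$ converges absolutely only after the exponents of $t_1,t_2$ are raised by $(2\Delta,\Delta)$, so one proves the identity first with $\hat k(\mu)$ multiplied by the transform of a suitable bi-$K$-invariant damping operator, whose symbol is $C(\mu)/C^*(\mu)$ up to the polynomial factor $k_\text{adj}(\mu)^{-1}$ and a constant, and then restores $\hat k$: the $\Gamma$-quotients migrate into $C^*(\mu)$ and the polynomial piece $k_\text{adj}(\mu)$ stays with $J_{w,\mu}$. The trivial term is the same computation over the trivial Bruhat cell; since $h_\mu(I)=1$ it reduces, after the same $C^*$-bookkeeping, to $H_I(k,\psi_m,\psi_n,c)$ proportional to $\int_{\Re(\mu)=(0,0)}\hat k(\mu)\abs{c_3(\mu)}^{-2}\,d\mu$, which is the claimed formula with $J_I(\mu)$ as in \eqref{eq:JIDef}.

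Finally I would move the $\mu$-contour out to $\Re(\mu)=\eta$ and extend the class of $\hat k$. The contour shift is legitimate because $\hat k$ is holomorphic in the tube $\abs{\Re(\mu_i)}\le\tfrac12+\Delta$ by hypothesis, $J_{w,\mu}$ is holomorphic there by \propref{prop:JwHoloAndBd}, and no poles are crossed; the spectral side of \eqref{eq:LisKuznetsovFormula} is left untouched, its cusp-form parameters satisfying $\abs{\Re(\mu_i)}\le\tfrac{5}{14}<\tfrac12$ by Kim--Sarnak. For the extension, $C_c^\infty(K\backslash G/K)$ is dense in the relevant space, so it suffices to show that both sides of \eqref{eq:LisKuznetsovFormula} are continuous in $\hat k$ for the topology given by \eqref{eq:KuzSimplkBd}: the Kloosterman side is controlled by the polynomial bounds for $J_{w,\mu}$ of \propref{prop:JwHoloAndBd} together with the absolute convergence of the Kloosterman zeta functions in \corref{cor:AbsConvofKloostermanZetaFuncs}, and the spectral side by Blomer's mean-value estimates \thmref{thm:MeanValueEstimates}. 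The main obstacle throughout will be the absolute-convergence bookkeeping needed to apply Fubini across the nested $\kappa$-, $x$-, $x'$-, $t$-, $u$- and $\mu$-integrals: one must bound the $\wbar{U}_w(\R)$-integral $X'_w$ and the $y^*$-asymptotics for each of $w_4,w_5,w_l$ against the Stirling decay of $G$ after the contour has been pushed to $\Re(u)=-\tfrac12-10\epsilon$, and it is the balance of these that forces the exponents $\frac{27+4\Delta}{16}$ and $\frac{1-4\Delta}{8}$ in \eqref{eq:KuzSimplkBd}.
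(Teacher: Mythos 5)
Your overall route matches the paper's: apply spherical inversion, evaluate an inner integral to produce two Whittaker functions, Mellin--expand one of them, shift the $u$-contour to $-\frac12-10\epsilon$ picking up the residues $G^*_l$, $G^*_r$, $G^*_b$, fold the $(\pi y_i^*)^{1-u_i}$ factors into the $\bar U_w$-integral to make $X'_w$, apply Stade's formula at $s=1+\Delta$ to generate $k_{\text{adj}}$ and $C^*$, then shift $\Re(\mu)$ to $\eta$ and check convergence. However, you mislocate the source of the second Whittaker function. Both Whittaker functions in the paper's equation \eqref{eq:BeforeMellinExpansionOfWhittaker} arise simultaneously from the Fourier Transform of the Spherical Function (\lemref{lem:FourierTransformoftheSphericalFunction}), which evaluates the single $U(\R)$-integral $\int h_\mu(\trans{x}z)\psi(x)\,dx$ as $\kappa\,W(z^{-1},-\mu,\psi)W(I,\mu,\psi)$; the $\bar U_w(\R)$-integral against $\wbar{\psi_n^v(x')}$ does \emph{not} collapse into $W^*(y^*;\mu,\psi_{11})$ (indeed $y^*$ still depends on $x'$), but is deliberately left unevaluated as $X'_w(u,y,t)$ after the $(y^*_i)^{1-u_i}$ factors are inserted by the Mellin expansion of the second Whittaker function. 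That lemma is also the hard technical point your proposal skips: the double $x,u$-integral it equates is not absolutely convergent, and the paper's proof of \lemref{lem:FourierTransformoftheSphericalFunction} requires interpreting the $x_1,x_2$ integrals in the limit sense and performing integration by parts over a nine-region decomposition of the $(x_1,x_2)$-plane to make the interchange legitimate. Your sketch of this step as a single "collapse" is not an argument; without \lemref{lem:FourierTransformoftheSphericalFunction} or an equivalent, the chain from $H_w$ to $J_{w,\mu}$ does not go through.

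Two smaller divergences from the paper. The scaling identity $W(y;\mu,\psi_{t_1t_2})=p_{-\rho-\mu^{w_l}}(|t|)W(|t|y;\mu,\psi_{11})$ does not "absorb the $v\in V$ signs"; the sum over $v$ remains visible on the Kloosterman side. And the extension from $k\in C_c^\infty$ to the class of $\hat k$ satisfying \eqref{eq:KuzSimplkBd} is not done by a density-and-continuity argument in the paper; instead the paper introduces the $\theta$-regularized Mellin kernel \eqref{eq:PsiThetaInvMellin}, performs all contour shifts before taking the $\theta\to\frac{\pi}{2}$ limit (where absolute convergence is free), and then shows the limit converges absolutely in the final configuration, so the identity extends by direct analytic continuation of both sides under the hypothesis \eqref{eq:KuzSimplkBd}. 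A density argument could in principle replace this, but you would still need the absolute-convergence bounds from \propref{prop:JwHoloAndBd}, \corref{cor:AbsConvofKloostermanZetaFuncs}, and \thmref{thm:MeanValueEstimates} to make continuity precise, so it is no shortcut.
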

This should be regarded as a theorem on the higher-rank hypergeometric functions, in the style of Stade; it assigns to the weight functions $H_w$ good complex analytic expressions.
Lastly, we needed to replace $C(\mu)$ because the pole at $\mu_1-\mu_2=-1$ also shows up on the arithmetic side and interfers with the absolute convergence of the sum of the long-element Kloosterman sums; we accomplish this change by applying Stade's formula at $s=1+\Delta$ instead of $s=1$, and renormalizing to obtain the proper asymptotics, $C(\mu) \asymp C^*(\mu)$.
All of the $J_{w,\mu}$ functions, as well as $C^*(\mu)$, depend on the choice of $\Delta$.

Acting on the desire for a useable Kuznetsov formula on $SL(3)$, we study the weight functions $J_{w,\mu}$.
We express asymmetrical polynomial bounds in the $\mu$ variables through
\begin{align}
\label{eq:Mpolydef}
	M_\text{poly}(a,b,c;\mu) =& \abs{1+i\Im(\mu_1-\mu_2)}^a \abs{1+i\Im(\mu_1-\mu_3)}^b \\
	& \qquad \abs{1+i\Im(\mu_2-\mu_3)}^c, \nonumber
\end{align}
this reflects the facts that we will always consider $\Re(\mu)$ fixed and the bounds coming from the Whittaker functions and their residues will depend only on the differences of the $\mu$ variables.
In deriving bounds for the Whittaker functions, we will tend to assume an ordering on the differences, so we must symmetrize the bound to remove this assumption:
\begin{align}
\label{eq:Msymdef}
	M_\text{sym}(\kappa, \delta; \mu) =& \sum_{w\in W} M_\text{poly}\paren{\frac{\kappa-\delta}{2}+\epsilon,\frac{\kappa-\delta}{2}+\epsilon,\delta+\epsilon;\mu^w},
\end{align}
here $\kappa$ is the total exponent, i.e. the sum of the three exponents, and $\delta$ is the minimum exponent; the worst bound always occurs when $\delta$ is the exponent of the smallest difference, and the two larger differences are always of comparable size.
Armed with this notation, we describe the $J_{w,\mu}$ kernel functions as follows:
\begin{prop}
\label{prop:JwHoloAndBd}
	We have each $J_{w,\mu}$ holomorphic on $\Re(\mu) \in \mathcal{A}_w$,
	\begin{align*}
		\mathcal{A}_{w_4} =& \set{\eta:-\frac{1+\Delta}{2} < \eta_1 \le \eta_3 \le \eta_2 < -2\eta_1}, \\
		\mathcal{A}_{w_5} =& \set{\eta:-2\eta_2 < \eta_1 \le \eta_3 \le \eta_2 < 1+\Delta}, \\
		\mathcal{A}_{w_l} =& \biggl\{\eta:-\frac{1+\Delta}{2} < \eta_1 \le \eta_3 \le \eta_2 < 1+\Delta, \\
		& \qquad \eta_1-\eta_2 > -2, -2\eta_2<\eta_1<-\frac{\eta_2}{2}\biggr\},
	\end{align*}
	with the bounds:
	\begin{align*}
		J_{w_4,\mu}(y) \ll& \abs{y}^{-\Re(\mu_1)} M_\text{sym}(5+3\Re(\mu_1), 1+\delta_{w_4}/2; \mu) \\
		J_{w_5,\mu}(y) \ll& \abs{y}^{\Re(\mu_2)} M_\text{sym}(5-3\Re(\mu_2), 1+\delta_{w_5}/2; \mu) \\
		J_{w_l,\mu}(y) \ll& \abs{y_1}^{-\Re(\mu_1)} \abs{y_2}^{\Re(\mu_2)} M_\text{sym}(5 + 2\Re(\mu_1-\mu_2), 1+\delta_{w_l}/2; \mu),
	\end{align*}
	where
	\begin{align*}
		\delta_u =& \Min{\Re(\mu_1)-\Re(\mu_2)-1,3\Re(\mu_1), -3\Re(\mu_2)}, \\
		\delta_{w_4} =& \delta_u+1-\Delta+2\Re(\mu_1), \\
		\delta_{w_5} =& \delta_u+1-\Delta-\Re(\mu_2), \\
		\delta_{w_l} =& \delta_u+1-\Delta+2\Re(\mu_1).
	\end{align*}
\end{prop}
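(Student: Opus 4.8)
\emph{Proof strategy.}
The plan is to treat each $J_{w,\mu}$ as an iterated Mellin--Barnes integral of a ratio of Gamma functions against a power of~$y$, and to read off holomorphy and polynomial growth from Stirling's formula, in the spirit of Stade's work on higher-rank hypergeometric integrals. The elementary prefactor $\tfrac{1}{16\pi^3}k_\text{adj}(\mu)\abs{c_3(\mu)}^{-2}$ in~\eqref{eq:JwDef} is handled first: by~\eqref{eq:c3tandef}, on a fixed line $\Re(\mu)=\eta$ the tangent factors are bounded away from the hyperplanes $\mu_j-\mu_k\in 2\Z+1$, giving $\abs{c_3(\mu)}^{-2}\ll M_\text{poly}(1,1,1;\mu)$; and by~\eqref{eq:kadjdef}, with the prescribed branch (holomorphic throughout each $\mathcal{A}_w$, where $\abs{\Re(\mu_j-\mu_k)}<3+2\Delta$), $k_\text{adj}(\mu)\ll M_\text{poly}(-\Delta,-\Delta,-\Delta;\mu)$, so this prefactor contributes $M_\text{poly}(1-\Delta,1-\Delta,1-\Delta;\mu)$ to every bound. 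For the integral part, write $G^*(u,\mu)=G(u,\mu)/\Lambda(\mu)$ and express its residues $G^*_l,G^*_r,G^*_b$ explicitly as quotients of Gamma functions; Stirling's formula along a fixed vertical line $\Re(u_i)=\sigma_i$ gives exponential decay in $\Im(u_i)$ away from the poles $u_1\in\mu_i+2\Z_{\le 0}$, $u_2\in-\mu_i+2\Z_{\le 0}$, together with a polynomial factor in $\Im(u_i)$ and~$\Im(\mu_i)$, so each $u$-integral in~\eqref{eq:JwDef} converges absolutely and locally uniformly in~$\mu$ as long as the contours avoid these poles. The placement of $\Re(u)=-\tfrac12-10\epsilon$ relative to $u_1=\mu_1$, $u_2=-\mu_2$ and to the poles of $\Lambda(\mu)^{-1}$ and $k_\text{adj}$ is precisely what forces the lower constraints $\eta_1>-\tfrac{1+\Delta}{2}$ defining the $\mathcal{A}_w$, and the residue terms in~\eqref{eq:JwDef} are those collected on moving the contour in from the region $\Re(u)=(2,2)$ of~\eqref{eq:WhittakerMellinExpand}.

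Next I analyze the inner integrals. For each~$w$, substitute the explicit Iwasawa coordinates of $wx'$ from~\secref{sec:KSimpWeylTerms} into $X'_w$: for $w_4,w_5$ the group $\wbar{U}_w(\R)$ is one-dimensional, so $X'_w$ is a single oscillatory ($K$- or $J$-Bessel type) integral, while for $w_l$ it is all of $U(\R)$, so $X'_{w_l}$ is a genuine two-variable oscillatory integral. Bounding $\abs{X'_w(u,y,t)}\le\int_{\wbar{U}_w(\R)}\abs{{y_1^*}^{1-u_1}{y_2^*}^{1-u_2}}\,dx'$, i.e. dropping the unimodular phases, yields a product of Beta integrals in the coordinates of $x'$, convergent on an explicit half-space in $\Re(u)$ and equal there to a Gamma quotient in $u,\mu,\Delta$; alternatively, integrating by parts against the phase $\psi_{yt^w}(x^*)$ produces the decay in $\abs{y}$. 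Then the $t$-integral in~\eqref{eq:TwFinal} is a power of $\abs{y}$ times the double Mellin transform of $W(t,-\mu,\psi_{11})$, controlled near $t\to\infty$ by the asymptotic $W^*(t,-\mu,\psi_{11})\ll t_1^{1-s_1}t_2^{1-s_2}$ for $s_1\ge\max_i\Re(\mu_i)$, $s_2\ge\max_i\Re(-\mu_i)$, and near $t\to 0$ by~\eqref{eq:WhittakerMellinExpand}; it converges on a second explicit region and evaluates, via~\eqref{eq:WhittakerMellinExpand}, to a further Gamma quotient of type $G(\cdot,-\mu)$. The intersection of these regions with the constraint above is exactly $\mathcal{A}_w$, and on it all the integrals converge locally uniformly; together with the (slightly delicate) verification that the apparent polar divisors within $\mathcal{A}_w$ --- notably those of $\abs{c_3(\mu)}^{-2}$ at $\mu_j-\mu_k$ an odd integer --- are removable, using the zeros of $\Lambda(\mu)^{-1}$ and Gamma-function identities applied to the evaluated inner integrals, this gives holomorphy of $J_{w,\mu}(y)$ on $\Re(\mu)\in\mathcal{A}_w$.

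To obtain the stated $y$-power and polynomial bound, note that after the previous step each $J_{w,\mu}(y)$ is a finite sum of Mellin--Barnes integrals of quotients of Gamma functions against a power of $\abs{y}$, times the elementary prefactor above. Shift the surviving $u$-contours onto the residual lines $\Re(u_1)=\Re(\mu_1)$, $\Re(u_2)=-\Re(\mu_2)$ --- the poles crossed being exactly those producing the terms $G^*_l,G^*_r,G^*_b$ already present in~\eqref{eq:JwDef} --- so that $(\pi\abs{y_1})^{-u_1}(\pi\abs{y_2})^{-u_2}$ becomes $\abs{y_1}^{-\Re(\mu_1)}\abs{y_2}^{\Re(\mu_2)}$ for $w_l$ (and the single-variable powers $\abs{y}^{-\Re(\mu_1)}$, $\abs{y}^{\Re(\mu_2)}$ for $w_4,w_5$). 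On those lines the remaining integrals converge absolutely, and Stirling applied to the complete array of Gamma factors --- the numerator factors of~$G$, those of the evaluated $x'$- and $t$-integrals, against the denominator of~$G$, the zeros of $\Lambda(\mu)^{-1}$, and the elementary prefactor --- produces a polynomial in $\Im(\mu)$ of total degree $5+2\Re(\mu_1-\mu_2)$ for $w_l$ (respectively $5+3\Re(\mu_1)$ and $5-3\Re(\mu_2)$), with minimal degree $1+\delta_w/2$ carried by the smallest difference $\mu_j-\mu_k$. Since this was derived under the ordering $\eta_1\le\eta_3\le\eta_2$, replacing $\mu$ by each $\mu^w$ and summing over~$W$ recasts the bound in the form $M_\text{sym}$ of~\eqref{eq:Msymdef}.

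The main obstacle is the $w_l$ case of the last two steps: because $\wbar{U}_{w_l}(\R)=U(\R)$ is three-dimensional, $X'_{w_l}$ is a two-variable Bessel-type integral rather than a one-dimensional one, so tracking its convergence region, the poles crossed when the $u$-contours are shifted, and the precise exponents is substantially more involved; and since the downstream application --- the $\frac{5}{14}$ Kim--Sarnak term in~\thmref{thm:SumsOfKloostermanSums} --- needs sharp exponents, one must push the contours as close to the poles as $\mathcal{A}_{w_l}$ allows, which is the source of the extra constraints $\eta_1-\eta_2>-2$ and $-2\eta_2<\eta_1<-\tfrac{\eta_2}{2}$. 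Verifying that no unaccounted pole is crossed in these shifts, and that the apparent polar loci of $\abs{c_3(\mu)}^{-2}$ and of the Gamma quotients inside $\mathcal{A}_w$ really are removable, is the technical heart of the proof.
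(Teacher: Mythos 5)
Your overall plan matches the paper's: bound $X'_w$ by dropping the phase to determine a convergence region $\mathcal{B}_w$, treat the $t$-integral as a Mellin transform of the Whittaker function (the paper's \propref{prop:MellinAbsWhittaker}), and bound the $u$-integrals of $G^*$ and its residues via Stirling-type estimates (the paper's \lemref{lem:GStarIntegral}), then intersect the resulting convergence regions to get $\mathcal{A}_w$. However, there are several confusions in the details that you should correct.

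First, $\wbar{U}_{w_4}(\R)$ and $\wbar{U}_{w_5}(\R)$ are \emph{two}-dimensional, not one-dimensional; see the explicit forms in \secref{sec:KSimpWeylTerms}, which give two free coordinates each. Second, the lower constraint $\eta_1>-\tfrac{1+\Delta}{2}$ does not come from placing $\Re(u)$ relative to ``poles of $\Lambda(\mu)^{-1}$ and $k_\text{adj}$'' (indeed $\Lambda(\mu)^{-1}$ has zeros, not poles); it comes from requiring absolute convergence of the $t$-integral, i.e.\ of the Mellin transform of the Whittaker function as in \propref{prop:MellinAbsWhittaker} together with the shift $t_1^{3-u^w_1+2\Delta}t_2^{2-u^w_2+\Delta}$ in \eqref{eq:TwFinal} --- your own third paragraph in fact names the right mechanism, so the two paragraphs contradict each other. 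Third, the bound $\int_{\wbar{U}_w(\R)}\abs{{y_1^*}^{1-u_1}{y_2^*}^{1-u_2}}\,dx'$ is a Beta/Gamma expression in $u$ alone (no $\mu$ or $\Delta$ enter); more importantly, the paper deliberately does \emph{not} evaluate it, only records boundedness and uniform convergence on $\mathcal{B}_w$, ``as in [GS01].'' Fourth, shifting the remaining double-$u$-integral to $\mathfrak{u}=\Re(\mu_1,-\mu_2)-\epsilon$ crosses no poles; the residue terms $G^*_l,G^*_r,G^*_b$ are already in \eqref{eq:JwDef} from the earlier shift (from $\Re(u)=(2,2)$ to $-\tfrac12-10\epsilon$) in the construction of the formula, not from this final shift. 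None of these misstatements is fatal to the strategy, but each misidentifies where a constraint or term actually originates, and the bound on the $u$-integrals is not a bare Stirling computation but the content of \lemref{lem:GStarIntegral}/\lemref{lem:ElementaryGIntegral}, which require their own argument.
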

From the above proposition, we see that the total power in the hypothesis \eqref{eq:KuzSimplkBd} can be improved to convergence of the integral
\begin{align}
\label{eq:KuzSimplkBetterBd}
	& \int_{\Re(\mu)=\eta} \abs{\hat{k}(\mu)} M_\text{sym}\paren{\kappa, \delta; \mu} \abs{d\mu}, \\
	& \kappa = \Max{3,\frac{7-3\Delta}{2}}, \nonumber \\
	& \delta = \Max{-\frac{1+3\Delta}{6},\frac{1-11\Delta}{8}}. \nonumber
\end{align}

We next compute a Mellin-Barnes integral representation for $J_{w_l,\mu}$ in section \ref{sec:JwlMellinBarnes}.
Though we strongly suspect that we have not achieved the optimal such representation, it allows us to compute a type of first-term asymptotic for $J_{w_l,\mu}$ in section \ref{sec:JwlAsymptotic}:
\begin{prop}
\label{prop:JwlAsymptotic}
	$J_{w_l,\mu}(y) = \abs{\pi y_1}^{-\mu_1} \abs{\pi y_2}^{\mu_2} K_{w_l}(\mu) + \sum_{j=1}^7 E_{w_l,j}(\mu,y)$, where
	\begin{align*}
		K_{w_l}(\mu) =& \frac{3 \pi^{2(\mu_2-\mu_1)}}{8 \pi^{\frac{17}{2}}} \prod_{(j,k)\in S} \frac{\paren{\frac{\Gamma\paren{\frac{1+\Delta+\mu_k-\mu_j}{2}} \Gamma\paren{\frac{1+\Delta+\mu_j-\mu_k}{2}}}{\paren{(3+2\Delta)^2-\paren{\mu_j-\mu_k}^2}^{\Delta/2}}}}{\Gamma\paren{\frac{1+\mu_k-\mu_j}{2}} \Gamma\paren{\frac{\mu_k-\mu_j}{2}}}, \\
		S =&\set{(1,2),(1,3),(3,2)}
	\end{align*}
	and the $E_{w_l,j}$ are given explicitly by equations \eqref{eq:Ewl1}-\eqref{eq:Ewl7} and satisfy
	\begin{equation}
		E_{w_l,j}(\mu,y) = o\paren{\abs{y_1}^{-\Re(\mu_1)} \abs{y_2}^{\Re(\mu_2)}}
	\label{eq:EwljAsymptotics}
	\end{equation}
	as $y \to 0$ with $\Re(\mu_1)\le\Re(\mu_3)\le\Re(\mu_2)$.
\end{prop}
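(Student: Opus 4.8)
This is a Mellin--Barnes residue-shifting argument. The plan is to start from the Mellin--Barnes representation of $J_{w_l,\mu}$ produced in section~\ref{sec:JwlMellinBarnes}, which writes $J_{w_l,\mu}(y)$ as a sum of contour integrals whose integrands are $\abs{\pi y_1}^{-s_1}\abs{\pi y_2}^{-s_2}$ times a meromorphic kernel assembled from ratios of $\Gamma$-functions: the $G^*,G^*_l,G^*_r,G^*_b$ factors of \eqref{eq:JwDef}, the $\Gamma$-quotients arising from Mellin-expanding $W(t,-\mu,\psi_{11})$ via \eqref{eq:WhittakerMellinExpand} and from the $\wbar U_{w_l}$-integral $X'_{w_l}$ appearing in \eqref{eq:TwFinal}, all multiplied by the fixed normalizers $\frac{1}{16\pi^3}\frac{k_{\mathrm{adj}}(\mu)}{\abs{c_3(\mu)}^2}$. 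Since we want $y\to 0$, I would push each $s$-contour in the direction along which $\abs{\pi y_i}^{-s_i}$ decays, collecting residues at the poles crossed; the leading term will be the residue at the dominant (rightmost-crossed) pole, and everything else — the remaining residues together with the leftover shifted integrals — makes up the seven functions $E_{w_l,j}$ of \eqref{eq:Ewl1}--\eqref{eq:Ewl7}.

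\textbf{Extracting the main term.} The first step is to identify the dominant pole — the one whose residue carries the net $y$-power $\abs{\pi y_1}^{-\mu_1}\abs{\pi y_2}^{\mu_2}$ (keeping in mind that part of the $y$-dependence is hidden in the $\psi_{yt^w}$-twist inside $X'_{w_l}$, not only in the explicit $\abs{\pi y_i}^{-u_i}$) and to check it is simple for $\mu$ in generic position. Evaluating its residue is then a $\Gamma$-function bookkeeping exercise: one substitutes the pole location into each $\Gamma$-quotient, collects the powers of $\pi$ into $\frac{3\pi^{2(\mu_2-\mu_1)}}{8\pi^{17/2}}$, and applies the reflection formula and Legendre's duplication formula to fold the surviving $\Gamma$'s into the product over $S=\set{(1,2),(1,3),(3,2)}$ claimed for $K_{w_l}(\mu)$. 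Here $k_{\mathrm{adj}}(\mu)$ of \eqref{eq:kadjdef} supplies the $\paren{(3+2\Delta)^2-(\mu_j-\mu_k)^2}^{-\Delta/2}$ in each factor, while $\abs{c_3(\mu)}^{-2}$ from \eqref{eq:c3tandef} together with the $\Lambda(\mu)$ absorbed into $G^*=G/\Lambda$ account for the ratio of numerator $\Gamma\paren{\frac{1+\Delta+\mu_k-\mu_j}{2}}\Gamma\paren{\frac{1+\Delta+\mu_j-\mu_k}{2}}$ to denominator $\Gamma\paren{\frac{1+\mu_k-\mu_j}{2}}\Gamma\paren{\frac{\mu_k-\mu_j}{2}}$.

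\textbf{The error terms and the $o$-estimate.} The remaining residues and the leftover integrals give $E_{w_l,1},\dots,E_{w_l,7}$; the count of seven is simply the enumeration of the secondary poles one meets while shifting the one or two contours attached to each of the four pieces $G^*,G^*_l,G^*_r,G^*_b$, together with the final shifted integrals. For each residue term I would read the powers of $\abs{y_1},\abs{y_2}$ off the pole location, and for each leftover integral off the real part of the shifted contour, then invoke Stirling's bound on the $\Gamma$-kernel — the very decay estimates used to prove \propref{prop:JwHoloAndBd} — to obtain absolute convergence and the claimed order of magnitude. To conclude \eqref{eq:EwljAsymptotics} one checks that, under the ordering $\Re(\mu_1)\le\Re(\mu_3)\le\Re(\mu_2)$, each of these exponents is strictly dominated by $\paren{-\Re(\mu_1),\Re(\mu_2)}$ as $y\to 0$: a secondary residue at a location involving $\mu_i$ with $i$ not the leading index produces $\abs{y_1}^{-\Re(\mu_i)}$ with $\Re(\mu_i)>\Re(\mu_1)$ (resp. $\abs{y_2}^{\Re(\mu_i)}$ with $\Re(\mu_i)<\Re(\mu_2)$), and a pole of the Whittaker $\Gamma$'s pushes the exponent down further; the shifted contours are placed strictly past all of these. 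At the chamber walls where two $\Re(\mu_i)$ coincide, the corresponding error acquires at worst a $\log\abs{y}$ from a merging pole and is still $o$, or one deduces the wall case from the interior one using the holomorphy of $J_{w_l,\mu}$ recorded in \propref{prop:JwHoloAndBd}.

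\textbf{Main obstacle.} The delicate parts are the combinatorics of the contour shifts — verifying that exactly seven poles and leftover integrals occur, that no contour gets pinched as $y\to 0$, and that the shifts remain inside the holomorphy domain $\mathcal{A}_{w_l}$ of \propref{prop:JwHoloAndBd} — and the $\Gamma$-function algebra of the residue computation, which must reproduce $K_{w_l}(\mu)$ with every constant and every power of $\pi$ in place. Obtaining $o$ rather than merely $O$, uniformly down to the walls of the Weyl chamber, is the remaining subtlety.
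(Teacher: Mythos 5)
Your overall strategy matches the paper's: $J_{w_l,\mu}$ is written via the Mellin--Barnes representation \eqref{eq:TwlFinal} inside \eqref{eq:JwDef}, the contours are pushed in the direction of decay of $\abs{\pi y_i}^{-s_i}$, and the seven error terms are exactly the three non-double-residue $u$-pieces ($G^*$, $G^*_l$, $G^*_r$) left as integrals, plus the four leftover pieces from shifting $s_1$ past $\mu_1+t_1$, $s_2$ past $-\mu_2$, and $t_1$ past $0$ in the $G^*_b$-term; the $o$-bounds are then power-saving bounds obtained by Stirling and H\"older as in \propref{prop:Bounds}. All of that is in line with the paper.

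There is, however, a genuine gap in your proposed evaluation of the main term $K_{w_l}(\mu)$. After taking the residue at $u=(\mu_1,-\mu_2)$, $s=(\mu_1+t_1,-\mu_2)$, $t_1=0$, what remains (cf.\ \eqref{eq:JwlAsympViaRwl}) is proportional to $R_{w_l}\bigl((\mu_1,-\mu_2),(\mu_1,-\mu_2),v,0\bigr)$, which is \emph{still} a double Barnes-type integral in the $r$-variables with the $t_2$-integral ($N_{w_l,2}$) nested inside. So ``substituting the pole location into each $\Gamma$-quotient'' and invoking reflection/duplication does not terminate the computation: there is a genuine multi-dimensional Barnes integral left to evaluate, and nothing in your sketch addresses it. The paper circumvents this by abandoning the Mellin--Barnes form at this point and computing $\lim_{y\to 0}\abs{\pi y_1}^{\mu_1}\abs{\pi y_2}^{-\mu_2}J_{w_l,\mu}(y)$ directly from \eqref{eq:TwFinal} and \eqref{eq:XwlFinal}: by dominated convergence the limit of $X'_{w_l}((\mu_1,-\mu_2),\sgn y,\abs y,t)$ is recognized as the Jacquet--Whittaker function $W(I,(\mu_2,\mu_3,\mu_1),\psi_t)=t_1^{-1+\mu_2}t_2^{-1-\mu_1}W(t,(\mu_2,\mu_3,\mu_1),\psi_{11})$, so the remaining $t$-integral becomes Stade's formula (\thmref{thm:StadesFormula}) at $s=1+\Delta$, which produces $K_{w_l}(\mu)$ in closed form; the identification with the residue expression \eqref{eq:JwlAsympViaRwl} is then by analytic continuation. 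This Whittaker/Stade step is the idea missing from your argument. (A smaller point: the error terms here are not ``secondary residues at locations involving $\mu_i$ for other $i$'' --- they are the leftover shifted integrals --- but that does not affect the final estimate, which you correctly attribute to Stirling bounds.)
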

The asymptotics \eqref{eq:EwljAsymptotics} are actually power-saving bounds over \\ $\abs{y_1}^{-\Re(\mu_1)} \abs{y_2}^{\Re(\mu_2)}$ which are vital to our purposes, but their dependence on $\Re(\mu_1)$ and $\Re(\mu_2)$ is unfortunately quite complicated.
Note that the only zeros of $K_{w_l}(\mu)$ in $\abs{\Re(\mu_i)} < \frac{1+\Delta}{2}$ occur when one of $\mu_2-\mu_3$, $\mu_3-\mu_1$, $\mu_2-\mu_1$ are $-n$ for some $n \ge 0$, and it has no poles on this region.

With this asymptotic of $J_{w_l,\mu}$ and the accompanying explicit error terms, in section \ref{sec:PartialInversion} we produce a type of partial inversion to the $H_{w_l}$ transform:
\begin{thm}
\label{thm:PartialInversion}
	Let $f:(\R^+)^2\to\C$ such that
	\[ \hat{f}(q) := \int_{(\R^+)^2} f(y) y_1^{q_1} y_2^{q_2} \frac{dy_1\, dy_2}{y_1 y_2} \]
	is holomorphic on $-\frac{1}{2}-\epsilon < \Re(q_i) \le 0$ and satisfies \\ $\hat{f}(q) \ll \abs{q_1 q_2 (q_2-q_1)}^{-4}$ there.
	Then we again have the formula \eqref{eq:LisKuznetsovFormula}, where now
	\begin{align*}
		H_{w_l}(k,\psi_m,\psi_n,c) =& \frac{1}{c_1 c_2}f\paren{\frac{\pi c_2 m_1 n_2}{c_1^2}, \frac{\pi c_1 m_2 n_1}{c_2^2}} \\
		& + \frac{1}{c_1 c_2} \sum_{j=1}^{10} F_j\paren{\hat{f};\paren{\frac{\pi c_2 m_1 n_2}{c_1^2}, \frac{\pi c_1 m_2 n_1}{c_2^2}}},
	\end{align*}
	using
	\begin{align}
	\label{eq:PartialInversionkhat}
		\hat{k}(\mu) = \hat{k}_{\mathfrak{q}}(\mu) :=& \frac{1}{2\pi i} \int_{\Re(q) = \mathfrak{q}} \frac{\hat{f}(q)}{K_{w_l}(q_1,-q_2)} k_\text{conv}(\mu,q) \\
		& \frac{\paren{q_1+q_2}^2\paren{2q_1-q_2}\paren{2q_2-q_1}}{\prod_{j=1}^3 \paren{q_1-\mu_j}\paren{q_2+\mu_j}} dq, \nonumber
	\end{align}
	where $k_\text{conv}(\mu,q)$ is chosen in \eqref{eq:kconv} to be holomorphic on $\Re(\mu_i),\Re(q_i) \in (-2,2)$ with $k_\text{conv}(\mu,(\mu_1,-\mu_2)) = 1$, 	and the $F_j$ are given explicitly by equations \eqref{eq:F1}-\eqref{eq:Fjplus3} and satisfy
	\begin{equation}
		F_j\paren{\hat{f};y} = o\paren{\abs{y_1}^{-\mathfrak{q}_1} \abs{y_2}^{-\mathfrak{q}_2}}
	\label{eq:FjAsymptotics}
	\end{equation}
	as $y \to 0$ with $\Re(\mu_1)\le\Re(\mu_3)\le\Re(\mu_2)$.
\end{thm}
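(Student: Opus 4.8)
The plan is to choose $\hat{k}$ so that the leading term $\abs{\pi y_1}^{-\mu_1}\abs{\pi y_2}^{\mu_2}K_{w_l}(\mu)$ of $J_{w_l,\mu}$ furnished by \propref{prop:JwlAsymptotic}, fed through the integral formula for $H_{w_l}$ in \thmref{thm:KuznetsovSimplification}, reconstructs $f$ by Mellin inversion, while the error terms $E_{w_l,j}$ and the residues crossed along the way account for the $F_j$.
Fix $\mathfrak{q}=(\mathfrak{q}_1,\mathfrak{q}_2)$ with each $\mathfrak{q}_i\in\paren{-\tfrac{1}{2}-\epsilon,0}$, so that $\hat{f}$ is holomorphic on $\Re(q)=\mathfrak{q}$, and fix a contour $\eta=\Re(\mu)\in\mathcal{A}_{w_l}$ (so $\eta_1\le\eta_3\le\eta_2$ and $J_{w_l,\mu}$ is holomorphic there by \propref{prop:JwHoloAndBd}) with $\mathfrak{q}_1<\eta_1$, $\eta_2<-\mathfrak{q}_2$, and no $\eta_j$ on $\set{\mathfrak{q}_1,-\mathfrak{q}_2}$; these constraints are mutually compatible once $\mathfrak{q}$ is taken suitably negative.
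With $k_\text{conv}(\mu,q)$ chosen as in \eqref{eq:kconv} --- holomorphic for $\Re(\mu_i),\Re(q_i)\in(-2,2)$, equal to $1$ at $q=(\mu_1,-\mu_2)$, and carrying the decay in $q$ needed below --- set $\hat{k}=\hat{k}_{\mathfrak{q}}$ as in \eqref{eq:PartialInversionkhat}.
Then $\hat{k}$ is symmetric in $(\mu_1,\mu_2,\mu_3)$, since its only $\mu$-dependence is through the manifestly symmetric $\prod_{j=1}^3\paren{q_1-\mu_j}^{-1}\paren{q_2+\mu_j}^{-1}$ and through $k_\text{conv}$; it is holomorphic near $\Re(\mu)=\eta$ because $\eta_j\notin\set{\mathfrak{q}_1,-\mathfrak{q}_2}$; and it decays fast enough to satisfy \eqref{eq:KuzSimplkBd} (equivalently \eqref{eq:KuzSimplkBetterBd}) --- here one uses the hypothesis $\hat{f}(q)\ll\abs{q_1 q_2(q_2-q_1)}^{-4}$ together with Stirling bounds, which make $1/K_{w_l}(q_1,-q_2)$ of polynomial growth, and the decay built into $k_\text{conv}$.
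Hence \thmref{thm:KuznetsovSimplification} applies, and \eqref{eq:LisKuznetsovFormula} holds with $H_{w_l}(k,\psi_m,\psi_n,c)=\frac{1}{(2\pi i)^2 c_1 c_2}\int_{\Re(\mu)=\eta}\hat{k}(\mu)\,J_{w_l,\mu}(y)\,d\mu$, where $y=\paren{c_2 m_1 n_2/c_1^2,\,c_1 m_2 n_1/c_2^2}$.

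\textbf{Extracting the main term.}
Substitute $J_{w_l,\mu}(y)=\abs{\pi y_1}^{-\mu_1}\abs{\pi y_2}^{\mu_2}K_{w_l}(\mu)+\sum_{j=1}^7 E_{w_l,j}(\mu,y)$ from \propref{prop:JwlAsymptotic}; the seven $E_{w_l,j}$-integrals furnish the first seven $F_j$.
In the main piece replace $\hat{k}$ by \eqref{eq:PartialInversionkhat} and interchange the $q$- and $\mu$-integrals --- legitimate by the decay established above --- so that, up to the explicit normalizing constants, it becomes
\[ \int_{\Re(q)=\mathfrak{q}}\frac{\hat{f}(q)\paren{q_1+q_2}^2\paren{2q_1-q_2}\paren{2q_2-q_1}}{K_{w_l}(q_1,-q_2)}\paren{\int_{\Re(\mu)=\eta}\frac{k_\text{conv}(\mu,q)\,K_{w_l}(\mu)\,\abs{\pi y_1}^{-\mu_1}\abs{\pi y_2}^{\mu_2}}{\prod_{j=1}^3\paren{q_1-\mu_j}\paren{q_2+\mu_j}}\,d\mu}\,dq. \]
Move $\Re(\mu_1)$ down past $\mathfrak{q}_1$ and $\Re(\mu_2)$ up past $-\mathfrak{q}_2$; by \propref{prop:JwHoloAndBd} no pole of $K_{w_l}$ is met, so the only poles crossed are those of $\prod_j\paren{q_1-\mu_j}^{-1}\paren{q_2+\mu_j}^{-1}$.
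At the double residue $(\mu_1,\mu_2)=(q_1,-q_2)$ one has $\mu_3=-q_1+q_2$, so the remaining pole factors become $\paren{q_1-\mu_2}\paren{q_1-\mu_3}\paren{q_2+\mu_1}\paren{q_2+\mu_3}=\paren{q_1+q_2}^2\paren{2q_1-q_2}\paren{2q_2-q_1}$, which exactly cancels the numerator in \eqref{eq:PartialInversionkhat}; $K_{w_l}(\mu)$ becomes $K_{w_l}(q_1,-q_2)$, cancelling the denominator; $k_\text{conv}\paren{(q_1,-q_2),q}=1$; and $\abs{\pi y_1}^{-\mu_1}\abs{\pi y_2}^{\mu_2}=\abs{\pi y_1}^{-q_1}\abs{\pi y_2}^{-q_2}$.
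Hence the double residue contributes $\frac{1}{c_1 c_2}$ times the Mellin inversion integral of $\hat{f}$ at $(\pi y_1,\pi y_2)$, i.e.\ $\frac{1}{c_1 c_2}f\paren{\pi y_1,\pi y_2}$, which is the asserted main term.

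\textbf{Error terms and their decay.}
The residues crossed in moving the $\mu$-contours other than the double residue above --- three of them after combining, lying on the loci $\mu_j=q_1$ ($j\ne1$) and $\mu_j=-q_2$ ($j\ne2$) --- furnish the remaining three $F_j$.
Each is a contour integral in one fewer $\mu$-variable whose $q$-integrand still carries the factor $\hat{f}(q)\abs{\pi y_1}^{-q_1}\abs{\pi y_2}^{-q_2}$, so pushing the surviving $\mu$-contour just past $\Re(\cdot)=\mathfrak{q}$ yields $o\paren{\abs{y_1}^{-\mathfrak{q}_1}\abs{y_2}^{-\mathfrak{q}_2}}$.
For the seven $E_{w_l,j}$-terms, move the $\mu$-contour to $\Re(\mu_1)=\mathfrak{q}_1-\delta'$, $\Re(\mu_2)=-\mathfrak{q}_2+\delta'$ for small $\delta'>0$ (staying inside $\mathcal{A}_{w_l}$; any residues crossed are absorbed into the three $F_j$ just described), and invoke \eqref{eq:EwljAsymptotics}, which on this contour reads $E_{w_l,j}(\mu,y)=o\paren{\abs{y_1}^{-\Re(\mu_1)}\abs{y_2}^{\Re(\mu_2)}}=o\paren{\abs{y_1}^{-\mathfrak{q}_1}\abs{y_2}^{-\mathfrak{q}_2}}$; this gives \eqref{eq:FjAsymptotics}, provided the dependence of the implied constants on $\Re(\mu_1),\Re(\mu_2)$ is controlled uniformly along the deformation.

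\textbf{Main obstacle.}
The real work is in the last step: one must arrange $k_\text{conv}$, the decay exponent assumed on $\hat{f}$, and the intermediate contours so that every interchange and every contour shift converges absolutely --- resting on the polynomial bounds of \propref{prop:JwHoloAndBd}, the gamma-factor shape of $K_{w_l}$, and the explicit forms \eqref{eq:Ewl1}-\eqref{eq:Ewl7} of the $E_{w_l,j}$ --- and so that no unwanted collisions occur among the $\mu$-poles of $\hat{k}$, the zeros and poles of $K_{w_l}$, and the lines $\Re(q_j)=\mathfrak{q}_j$.
Above all, the power-saving refinement of \eqref{eq:EwljAsymptotics}, whose dependence on $\Re(\mu_1)$ and $\Re(\mu_2)$ is genuinely intricate, must be shown to survive these deformations with convergent $\mu$-integrals throughout; it is precisely this difficulty that confines the method to a partial inversion, producing the explicit family $F_1,\dots,F_{10}$ rather than a clean formula.
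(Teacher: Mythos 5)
Your overall strategy is the same as the paper's -- use $\hat{k}_{\mathfrak{q}}$, exploit the double $q$-residue $q=(\mu_1,-\mu_2)$ to turn the leading asymptotic of $J_{w_l,\mu}$ into a Mellin inversion of $\hat{f}$, and collect everything else as error terms -- but you carry it out in a different order and this matters for matching the stated theorem. The paper shifts the \emph{$q$-contours} from $\mathfrak{q}$ up to $(\eta_1,-\eta_2)+\epsilon$ first, with $\mu$ held at $\eta$; this produces $F_1$ (the shifted-$q$ remainder with the full $J_{w_l,\mu}$), $F_2$ and $F_3$ (the two single $q$-residues, again with the full $J_{w_l,\mu}$), and only then is $J_{w_l,\mu}$ expanded at the double residue, giving $f$ plus $F_4,\dots,F_{10}$ (each carrying the factor $\hat{f}(\mu_1,-\mu_2)/K_{w_l}(\mu)$). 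You instead split $J_{w_l,\mu}$ into $\abs{\pi y_1}^{-\mu_1}\abs{\pi y_2}^{\mu_2}K_{w_l}(\mu)+\sum_j E_{w_l,j}$ at the outset, then shift the $\mu$-contours through the $q$-poles. This is equivalent in the sense that the two organizations differ by a Cauchy deformation, but the pieces you get do not coincide with the paper's explicit equations \eqref{eq:F1}--\eqref{eq:Fjplus3}: your ``first seven $F_j$'' would carry the full $q$-integral $\hat{k}_{\mathfrak{q}}(\mu)$ rather than the residue $\hat{f}(\mu_1,-\mu_2)/K_{w_l}(\mu)$, and your ``remaining three'' would have $K_{w_l}$ or $J$ at different contour positions than the paper's $F_1,F_2,F_3$. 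Since the theorem statement is committed to those explicit forms, your decomposition would need to be reshuffled (the residues from the main piece and the $E$-pieces recombined into $J$, the remaining integrals recombined and redeformed) before it proves the statement verbatim.

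Two further concrete points. First, your description of the residue loci is inverted: shifting $\Re(\mu_1)$ down past $\mathfrak{q}_1$ and $\Re(\mu_2)$ up past $-\mathfrak{q}_2$ crosses the poles at $\mu_1=q_1$ and at $\mu_2=-q_2$ (and their intersection, the double residue); it does not cross ``$\mu_j=q_1$ for $j\ne1$'' or ``$\mu_j=-q_2$ for $j\ne2$'' -- note that $\Re(\mu_3)=-\eta_1-\eta_2$ moves toward $0$, not toward $\mathfrak{q}_1$ or $-\mathfrak{q}_2$, under the shift you describe. Second, the remark that residues crossed when deforming the $E_{w_l,j}$-integrals ``are absorbed into the three $F_j$ just described'' is where the accounting is being glossed: those residues carry $E_{w_l,j}$, not $K_{w_l}$, and they would need to be explicitly pooled with the main-piece residues to reconstitute $J_{w_l,\mu}$ in $F_1,F_2,F_3$; as written this step is not carried out. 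You correctly identify that the real substance is the absolute-convergence bookkeeping for all of these Mellin--Barnes integrals along the deformations (the paper's Section 7 and Table \ref{tab:FjContours}), but your proposal defers rather than addresses it.
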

Again, \eqref{eq:FjAsymptotics} does not quite do justice to the actual bounds we obtain.
The construction of $\hat{k}_{\mathfrak{q}}(\mu)$ is such that the pole at $q_1=\mu_1$ and $q_2=-\mu_2$ gives the residue
\[ \hat{k}_{\mathfrak{q}}(\mu) = \frac{\hat{f}(\mu_1,-\mu_2)}{K_{w_l}(\mu)} + \text{error terms}, \]
and paired with our first-term asymptotic for $J_{w_l,\mu}$, $H_{w_l}$ looks like the inverse Mellin transform of $\hat{f}$.
The motivation for the existence of $k_\text{conv}$ is to control the convergence of the $\mu$ integral and for ease of proof in the section on bounds; again, the exponent $-4$ is certainly not best possible -- optimal is likely $-\frac{2}{3}-\epsilon$, but it is convenient.

One should note that this is an incomplete generalization to $SL(3,\R)$ of the second form of Kuznetsov's formula on $SL(2,\R)$; it allows us to study sums of Kloosterman sums by applying knowledge of the Fourier-Whittaker coefficients of automorphic forms.
As the asymptotic in \propref{prop:JwlAsymptotic} is for $y_1,y_2 \to 0$, this partial inversion formula is effective when studying sums of Kloosterman sums with $\frac{1}{2} < \frac{\log c_1}{\log c_2} < 2$, i.e. when each of the moduli is at least the square-root of the other.
One would expect that in practice, the remaining sums, i.e. those over $c_1 < \sqrt{c_2}$ or $c_2 < \sqrt{c_1}$, will be small.
Similarly, we expect that the sums of Kloosterman sums for the intermediate Weyl elements $w_4$ and $w_5$ will tend to be small compared to the long-element sum and the trivial term $H_I$.

Lastly, by comparison with the method on $SL(2)$, one might wonder if the multiple error terms at $\Re(\mu) = \paren{-\frac{1}{2},0}$ and $\Re(\mu) = \paren{0,\frac{1}{2}}$ indicate the need for some form of discrete series in the full inversion formula.
This is a somewhat tenuous connection, however.

The formulae used above and in \thmref{thm:SumsOfKloostermanSums} depend strictly on the location of the contours in the Mellin-Barnes integrals for each $F_j$, so we include in section \ref{sec:Bounds} some bounds to demonstrate their absolute convergence at the relevant locations, which are unfortunately not entirely trivial.
The bounds we obtain in \propref{prop:Bounds} are most likely not optimal in their dependence on $\mu$, but they are sufficient for our purposes here.
The Mellin-Barnes representation of $J_{w_l,\mu}$, given by \eqref{eq:JwDef} and \eqref{eq:TwlFinal}, can also be used to improve our bound to
\begin{align}
\label{eq:BetterJwlBound}
	\abs{J_{w_l,\mu}(y)} \ll& \abs{y_1}^{\frac{1}{2}+\epsilon} \abs{y_2}^{\frac{1}{2}+\epsilon} M_\text{sym}\paren{\frac{5}{2},0;\mu},
\end{align}
at $\Re(\mu) = \paren{-\frac{1}{2}-\epsilon,\frac{1}{2}+\epsilon}$ -- essentially we are giving an analytic continuation in the $r$ variables of \eqref{eq:TwlFinal}, but a uniform bound over the range of holomorphy is difficult due to complexity.
Similar, and more striking, bounds for the $w_4$ and $w_5$ kernel functions can be obtained in this manner:
\[ \abs{J_{w_4,\mu}(y)}, \abs{J_{w_5,\mu}(y)} \ll \abs{y}^{\frac{1}{2}+\epsilon} M_\text{sym}\paren{2,0;\mu} \]
at $\Re(\mu) = \paren{-\frac{1}{2}-\epsilon,\frac{1}{2}+\epsilon}$, but we will not require bounds of this strength.

Finally, in section \ref{sec:SumsOfKloostermanSums}, we obtain the results of the introduction.
These results follow directly from the locations of the spectral parameters of the objects in the partial inversion formula \thmref{thm:PartialInversion}, and the absolute convergence of the weight functions in the desired locations using \propref{prop:Bounds}.
We visualize the spectral parameters in figure \ref{fig:SpectralLocation}.

\begin{figure}
	\centering
	\includegraphics{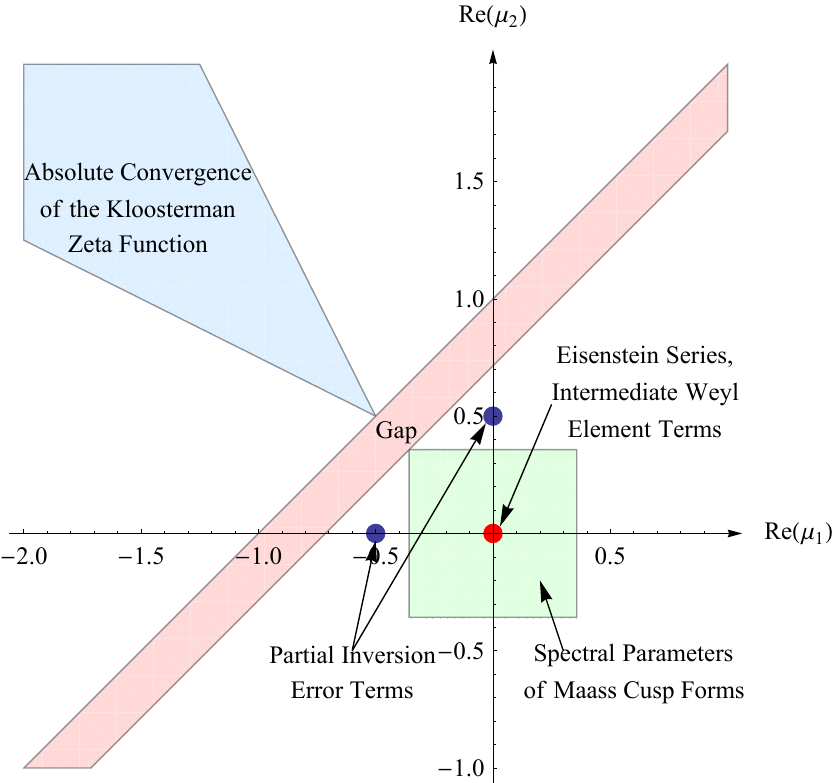}
	\caption{Location of the $SL(3)$ Spectral Parameters}
	\label{fig:SpectralLocation}
\end{figure}

\section{Evaluation of the Integral Transforms}
\label{sec:IntegralTransforms}

\subsection{\texorpdfstring{The $G$ Function}{The G Function}}
We will need a number of elementary results and bounds on the $G$ function to obtain the Kuznetsov formula above, so we collect them here.
We start with $G^*(u,\mu) := \frac{G(u,\mu)}{\Lambda(\mu)}$.
The $G$ function has poles at $u_1 = \mu_i$ and $-u_2=\mu_i$, so up to permutations, we may assume a pole is at $u_1 = \mu_1$, then the residue of $G^*(u,\mu)$ there is given by
\[ G^*_l(u_2,\mu) := 2\pi^{\frac{3}{2}+\mu_1-\mu_3} \frac{\Gamma\paren{\frac{\mu_1-\mu_2}{2}}\Gamma\paren{\frac{\mu_1-\mu_3}{2}}}{\Gamma\paren{\frac{1+\mu_1-\mu_2}{2}}\Gamma\paren{\frac{1+\mu_1-\mu_3}{2}}} \frac{\Gamma\paren{\frac{u_2+\mu_2}{2}}\Gamma\paren{\frac{u_2+\mu_3}{2}}}{\Gamma\paren{\frac{1+\mu_2-\mu_3}{2}}}. \]
For the poles in $u_2$, we let
\[ G^*_r(u_1,\mu) := 2\pi^{\frac{3}{2}+\mu_1-\mu_3} \frac{\Gamma\paren{\frac{\mu_1-\mu_2}{2}}\Gamma\paren{\frac{\mu_3-\mu_2}{2}}}{\Gamma\paren{\frac{1+\mu_1-\mu_2}{2}}\Gamma\paren{\frac{1+\mu_2-\mu_3}{2}}} \frac{\Gamma\paren{\frac{u_1-\mu_1}{2}}\Gamma\paren{\frac{u_1-\mu_3}{2}}}{\Gamma\paren{\frac{1+\mu_1-\mu_3}{2}}}. \]
be the residue at $-u_2=\mu_2$.

The residue at $u_1=\mu_1$ again has poles at $-u_2=\mu_2,\mu_3$, and we assume $-u_2=\mu_2$, giving the residue
\[ G^*_b(\mu) := 4\pi^{\frac{3}{2}+\mu_1-\mu_3} \frac{\Gamma\paren{\frac{\mu_1-\mu_2}{2}}\Gamma\paren{\frac{\mu_1-\mu_3}{2}}\Gamma\paren{\frac{\mu_3-\mu_2}{2}}}{\Gamma\paren{\frac{1+\mu_1-\mu_2}{2}}\Gamma\paren{\frac{1+\mu_1-\mu_3}{2}}\Gamma\paren{\frac{1+\mu_2-\mu_3}{2}}}. \]

We remind the reader of our notation for the polynomial bounds \eqref{eq:Mpolydef} and \eqref{eq:Msymdef}, and add one for the polynomial part of $\Lambda(\mu)$:
\begin{align}
\label{eq:Lambdapolydef}
	\Lambda_\text{poly}(\mu) = M_\text{poly}\paren{\frac{\Re(\mu_1-\mu_2)}{2},\frac{\Re(\mu_1-\mu_3)}{2},\frac{\Re(\mu_2-\mu_3)}{2};\mu}.
\end{align}
Then increasing in complexity, we have bounds for the $G^*$ function and its residues:
\begin{lem}
	\begin{enumerate}
	\item
	\begin{align*}
		G^*_b(\mu) \ll & \frac{M_\text{poly}\paren{\frac{\Re(\mu_1-\mu_2)-1}{2},\frac{\Re(\mu_1-\mu_3)-1}{2},\frac{\Re(\mu_3-\mu_2)-1}{2};\mu}}{\Lambda_\text{poly}(\mu)},
	\end{align*}

	\item Suppose $\mathfrak{u}_2+\Re(\mu_2), \mathfrak{u}_2+\Re(\mu_3) > -1$, then
	\begin{align*}
		& \int_{\Re(u_2)=\mathfrak{u}_2} \abs{G^*_l(u_2,\mu)} \abs{du_2} \ll \\
		& \frac{M_\text{poly}\paren{\frac{\Re(\mu_1-\mu_2)-1}{2},\frac{\Re(\mu_1-\mu_3)-1}{2},\mathfrak{u}_2+\frac{\Re(\mu_2+\mu_3)}{2};\mu}}{\Lambda_\text{poly}(\mu)},
	\end{align*}
	
	\item Suppose $\mathfrak{u}_1-\Re(\mu_1), \mathfrak{u}_1-\Re(\mu_3) > -1$, then
	\begin{align*}
		& \int_{\Re(u_1)=\mathfrak{u}_1} \abs{G^*_r(u_1,\mu)} \abs{du_1} \ll \\
		& \frac{M_\text{poly}\paren{\frac{\Re(\mu_1-\mu_2)-1}{2},\mathfrak{u}_1-\frac{\Re(\mu_1+\mu_3)}{2},\frac{\Re(\mu_3-\mu_2)-1}{2};\mu}}{\Lambda_\text{poly}(\mu)},
	\end{align*}
	
	\item
	Suppose $\mathfrak{u}_1-\Re(\mu_i), \mathfrak{u}_2+\Re(\mu_i) > -1-\epsilon$, then
	\begin{align*}
		& \int_{\Re(u)=\mathfrak{u}} \abs{G^*(u,\mu)} \abs{du} \ll \frac{M_\text{sym}\paren{\mathfrak{u}_1+\mathfrak{u}_2-\frac{1}{2}, \delta; \mu}}{\Lambda_\text{poly}(\mu)},
	\end{align*}
	where
	\[ \delta = \min_i \set{\frac{2\mathfrak{u}_1+\Re(\mu_i)}{2},\frac{\mathfrak{u}_1-\Re(\mu_i)}{2},\frac{2\mathfrak{u}_2-\Re(\mu_i)}{2},\frac{\mathfrak{u}_2+\Re(\mu_i)}{2}}. \]
\label{lem:GStarIntegral}
	\end{enumerate}
\end{lem}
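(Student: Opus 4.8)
The plan is to deduce all four bounds from Stirling's formula in the two-sided form $\abs{\Gamma(\sigma+i\tau)}\asymp(1+\abs{\tau})^{\sigma-\frac12}e^{-\frac{\pi}{2}\abs{\tau}}$, valid uniformly for $\sigma$ in a fixed compact set bounded away from the poles $0,-1,-2,\dots$, together with the one-variable consequence $\int_{\R}\abs{\Gamma\paren{\tfrac{a+i\tau}{2}}\,\Gamma\paren{\tfrac{b+i\tau}{2}}}\,d\tau\asymp(1+\abs{\Im(a-b)})^{\frac{\Re(a+b)}{2}}e^{-\frac{\pi}{4}\abs{\Im(a-b)}}$, valid for $\Re(a),\Re(b)>-1$, which one proves by localizing the $\tau$-integral to the band on which $\abs{\Im a+\tau}+\abs{\Im b+\tau}$ is the constant $\abs{\Im(a-b)}$, so that the two exponential factors collapse. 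Writing $t_{jk}=\Im(\mu_j-\mu_k)$, one has $t_{13}=t_{12}+t_{23}$, so whichever $\abs{t_{jk}}$ is smallest the remaining two are comparable --- exactly the phenomenon built into the $W$-symmetrization defining $M_\text{sym}$ --- and hence while proving each estimate I may fix an ordering of the $\abs{t_{jk}}$ and recover the general statement by summing over $W$. In the complementary range where all $\abs{t_{jk}}$ are bounded, every $\Gamma$ in sight is bounded above and below, all four claimed bounds are $\asymp 1$, and there is nothing to prove; only the regime where some $\abs{t_{jk}}$ is large is at issue, and there the conditions imposed on $\mathfrak{u}$ and $\Re(\mu)$ are precisely those under which the two estimates above apply and keep the relevant $\Gamma$-arguments off the poles.

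Parts (1), (2) and (3) follow a common template of increasing complexity. For (1), $G^*_b(\mu)$ is, up to the bounded factor $4\pi^{3/2+\mu_1-\mu_3}$, a product of three ratios of the shape $\Gamma\paren{\tfrac{\pm(\mu_j-\mu_k)}{2}}$ over $\Gamma\paren{\tfrac{1+(\mu_j-\mu_k)}{2}}$; Stirling applied to numerator and denominator makes the factors $e^{-\frac{\pi}{4}\abs{t_{jk}}}$ cancel inside each ratio, leaving a pure power of $1+\abs{t_{jk}}$, and reading off these three exponents and matching them against the exponents of $M_\text{poly}(\dots)/\Lambda_\text{poly}(\mu)$ gives the bound. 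For (2) --- and (3) by the mirror substitution $u_1\leftrightarrow-u_2$, $\mu\leftrightarrow-\mu$ --- write $G^*_l(u_2,\mu)$ as a $u_2$-independent prefactor times $\Gamma\paren{\tfrac{u_2+\mu_2}{2}}\Gamma\paren{\tfrac{u_2+\mu_3}{2}}$. The prefactor is estimated as in (1), except that $\Gamma\paren{\tfrac{1+\mu_2-\mu_3}{2}}^{-1}$ now carries an unmatched, \emph{growing} exponential $e^{+\frac{\pi}{4}\abs{t_{23}}}$; this is cancelled by the one-variable estimate applied with $a=\mathfrak{u}_2+\mu_2$, $b=\mathfrak{u}_2+\mu_3$, which contributes $(1+\abs{t_{23}})^{\mathfrak{u}_2+\frac{\Re(\mu_2+\mu_3)}{2}}e^{-\frac{\pi}{4}\abs{t_{23}}}$ --- and $\Re(a),\Re(b)>-1$ is exactly the stated hypothesis $\mathfrak{u}_2+\Re(\mu_2),\mathfrak{u}_2+\Re(\mu_3)>-1$. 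Collecting the surviving powers reproduces the claimed $M_\text{poly}(\dots)/\Lambda_\text{poly}(\mu)$; no symmetrization is needed in (1)--(3), but the statements are phrased that way for uniformity.

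The real work, and the source of the opaque exponent $\delta$, is part (4). Here $\abs{G^*(u,\mu)}=\abs{\Gamma\paren{\tfrac{u_1+u_2}{2}}}^{-1}\prod_i\abs{\Gamma\paren{\tfrac{u_1-\mu_i}{2}}}\prod_i\abs{\Gamma\paren{\tfrac{u_2+\mu_i}{2}}}$, integrated over $\Re(u_1)=\mathfrak{u}_1$ and $\Re(u_2)=\mathfrak{u}_2$. By Stirling the three $u_1$-numerator $\Gamma$'s decay like $e^{-\frac{3\pi}{4}\abs{\Im(u_1)}}$, the three $u_2$-numerator $\Gamma$'s like $e^{-\frac{3\pi}{4}\abs{\Im(u_2)}}$, while $\Gamma\paren{\tfrac{u_1+u_2}{2}}^{-1}$ grows at most like $e^{+\frac{\pi}{4}(\abs{\Im(u_1)}+\abs{\Im(u_2)})}$, so the net decay is at worst $e^{-\frac{\pi}{2}(\abs{\Im(u_1)}+\abs{\Im(u_2)})}$, the double integral converges absolutely, and its mass concentrates where $\Im(u_1)$ and $\Im(u_2)$ share a sign. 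I would then dyadically decompose the $(\Im(u_1),\Im(u_2))$-plane according to the position of $\Im(u_1)$ relative to $\Im(\mu_1),\Im(\mu_2),\Im(\mu_3)$ and of $\Im(u_2)$ relative to $-\Im(\mu_1),-\Im(\mu_2),-\Im(\mu_3)$, estimate each box by Stirling and the one-variable estimate, and sum. A Beta integral in the ``radial'' direction produces the $-\tfrac12$ in the total exponent $\mathfrak{u}_1+\mathfrak{u}_2-\tfrac12$; the smallest of the three differences inherits whichever of the four constraint quantities $\tfrac{2\mathfrak{u}_1+\Re(\mu_i)}{2}$, $\tfrac{\mathfrak{u}_1-\Re(\mu_i)}{2}$, $\tfrac{2\mathfrak{u}_2-\Re(\mu_i)}{2}$, $\tfrac{\mathfrak{u}_2+\Re(\mu_i)}{2}$ is least (hence $\delta=\min_i\set{\cdots}$ as stated, the four reflecting the four ways the $u_1$- and $u_2$-contours can be forced to concentrate), and the two larger differences split the remainder evenly, which is the $\tfrac{\kappa-\delta}{2}$ in $M_\text{sym}$. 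Dividing by $\Lambda_\text{poly}$ and symmetrizing over $W$ to discard the assumed ordering produces $M_\text{sym}(\mathfrak{u}_1+\mathfrak{u}_2-\tfrac12,\delta;\mu)/\Lambda_\text{poly}(\mu)$. I expect the main obstacle to be purely organizational --- identifying the dominant dyadic box in each configuration of the $\Im(\mu_i)$ and checking that the logarithmic losses at the boundaries between boxes are absorbed into the $\epsilon$'s --- routine in spirit but genuinely lengthy.
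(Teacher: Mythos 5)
Your proposal is correct and follows essentially the same route as the paper: Stirling's formula throughout, with parts (1)--(3) handled by direct inspection plus a one-variable ``beta-like'' integral estimate (your one-variable consequence is exactly the mechanism the paper invokes when it says $G^*_l$ ``integrates much like a beta function''), and part (4) reduced to a two-variable polynomial integral over the region where the exponential factors cancel. The one genuine organizational difference is that where you propose a dyadic decomposition of the $(\Im u_1,\Im u_2)$-plane, the paper instead isolates the non-decaying region (``$\Im(\mu_1)>\Im(u_1)>\Im(\mu_2)>\Im(-u_2)>\Im(\mu_3)$, up to permutation'') and proves a clean standalone estimate, \lemref{lem:ElementaryGIntegral}, by splitting $[v_1,v_2]\times[v_2,v_3]$ into three pieces at the midpoints of $[v_1,v_2]$ and $[v_2,v_3]$ rather than dyadically; your $\delta=\min_i\{\cdots\}$ and the split $\tfrac{\kappa-\delta}{2},\tfrac{\kappa-\delta}{2},\delta$ emerge in the paper from first reading off $\delta=\Min{a_1+a_2+1,\,a_1+a_2+b_1+c+1}$ in that abstract lemma and then permuting $\mu$ and interchanging $(\mathfrak{u}_1,\mu)\leftrightarrow(\mathfrak{u}_2,-\mu)$, exactly the symmetrization step you describe. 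The paper's formulation is somewhat tidier --- a fixed number of cases rather than a sum over dyadic boxes with boundary losses --- but both resolve to the same bookkeeping, and both rely on the observation $|\Im(\mu_1-\mu_3)|\asymp|\Im(\mu_1-\mu_2)|+|\Im(\mu_2-\mu_3)|$ to split the remaining exponent evenly between the two larger differences.
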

Here we have taken some care to separate the polynomial part of $\Lambda(\mu)$, as it will cancel with that of $\Lambda(-\mu)$.
Notice that for $\mathfrak{u} = \Re(\mu_1,-\mu_2) \pm \epsilon$ with $\Re(\mu_1) \le \Re(\mu_3) \le \Re(\mu_2)$, the bound in part $(d)$ dominates the other three if we replace
\[ \delta = \frac{1}{2}\Min{\Re(\mu_1)-\Re(\mu_2)-1,3\Re(\mu_1), -3\Re(\mu_2)}. \]
We defer the proof of this lemma until section \ref{sec:GFuncBd}.

\subsection{The General Term}
\label{sec:TheGeneralTerm}
In Li's construction of the Kuznetsov formula, the final step involved integrating away some extra variables on the spectral side, using Stade's formula:
\begin{thm}[Stade]
\label{thm:StadesFormula}
	For $\Re(s) \ge 1$,
	\begin{align*}
		& \int_{Y(\R)} W^*(y,\mu,\psi_{11}) W^*(y,\mu',\psi_{11}) y_1^{2s} y_2^{s} \, dy = \\
		& \frac{1}{4 \pi^{3s} \Gamma\paren{\frac{3s}{2}}} \prod_{j=1}^3 \prod_{k=1}^3 \Gamma\paren{\frac{s+\mu_j+\mu_k'}{2}}.
	\end{align*}
\end{thm}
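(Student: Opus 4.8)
The plan is to unfold one of the completed Whittaker functions using its double Mellin--Barnes representation \eqref{eq:WhittakerMellinExpand}, evaluate the resulting $Y(\R)$-integral as a double Mellin transform of the other Whittaker function, and then collapse the remaining $u$-integral by a Barnes-type lemma. First I would write, for $\Re(u)=(2,2)$,
\[
	W^*(y,\mu,\psi_{11}) = -\frac{1}{16\pi^4}\int_{\Re(u)=(2,2)} G(u,\mu)\,(\pi y_1)^{1-u_1}(\pi y_2)^{1-u_2}\,du,
\]
substitute this into the left-hand side, and interchange the $u$-integral with the $dy$-integral (justified for $\Re(s)$ large, where all integrals converge absolutely by the asymptotics $W^* \ll y_1^{1-t_1}y_2^{1-t_2}$ recorded after \eqref{eq:WhittakerMellinExpand}). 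The inner integral then becomes
\[
	\int_{Y(\R)} W^*(y,\mu',\psi_{11})\,(\pi y_1)^{1-u_1}(\pi y_2)^{1-u_2}\, y_1^{2s}y_2^{s}\,dy,
\]
which, after shifting the powers of $y_i$ and matching against the second line of \eqref{eq:WhittakerMellinExpand}, is exactly $\tfrac{\pi^2}{4}\,\pi^{-2s-s}\pi^{-2+u_1+u_2}\,G(2s-u_1,\,s-u_2,\,\mu')$ up to bookkeeping of the normalizing constants; here one uses that the Mellin transform of $W^*(\cdot,\mu',\psi_{11})$ with exponents $(1+v_1,1+v_2)$ is $\tfrac{\pi^2}{4}G(v,\mu')$ with $v_1=2s-u_1-1+\dots$, so I would just read off $v$ from the exponents. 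Thus the left-hand side equals a constant times
\[
	\int_{\Re(u)=(2,2)} G(u,\mu)\,G(2s-u_1,\,s-u_2,\,\mu')\,du
\]
times an explicit power of $\pi$.

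The heart of the argument is then evaluating this $u_1$- and $u_2$-integral in closed form. Writing out $G(u,\mu)$ and $G(2s-u_1,s-u_2,\mu')$ in terms of $\Gamma$-factors, the $u_1$-integral has integrand
\[
	\frac{\prod_{i=1}^3\Gamma\!\paren{\tfrac{u_1-\mu_i}{2}}\prod_{i=1}^3\Gamma\!\paren{\tfrac{2s-u_1+\mu_i'}{2}}}{\Gamma\!\paren{\tfrac{u_1+u_2}{2}}\,\Gamma\!\paren{\tfrac{2s-u_1+s-u_2}{2}}},
\]
and similarly the $u_2$-integral has three $\Gamma$'s in $\tfrac{u_2+\mu_i}{2}$, three in $\tfrac{s-u_2-\mu_i'}{2}$ over the same two denominator factors (which couple $u_1$ and $u_2$). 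The clean way to proceed is to substitute $u_1 = u_1'+u_2$-type shifts or, better, to recognize this as a second Barnes lemma: after the change of variables $u_1 = 2z_1$, $u_2 = 2z_2$ the $z_1$-integral is
\[
	\frac{1}{2\pi i}\int \frac{\prod_i\Gamma(z_1-\tfrac{\mu_i}{2})\prod_i\Gamma(s-z_1+\tfrac{\mu_i'}{2})}{\Gamma(z_1+z_2)\,\Gamma(2s-z_1-z_2)}\,dz_1,
\]
which is precisely the setting of Barnes' second lemma (three up, three down with the balancing condition $\sum(-\tfrac{\mu_i}{2})+\sum(s+\tfrac{\mu_i'}{2}) = 2s + \tfrac12\sum(\mu_i'-\mu_i) = 2s$ since $\sum\mu_i=\sum\mu_i'=0$). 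Barnes' second lemma evaluates it to a ratio of products of $\Gamma\!\paren{\tfrac{s+\mu_j+\mu_k'}{2}}$ over $\Gamma$'s involving $z_2$, and the remaining $z_2$-integral is then a first Barnes lemma (or simply a beta integral) that produces the factor $\Gamma\!\paren{\tfrac{3s}{2}}^{-1}$ in the denominator. Assembling the surviving nine numerator $\Gamma$-factors $\prod_{j,k}\Gamma\!\paren{\tfrac{s+\mu_j+\mu_k'}{2}}$, the denominator $\Gamma\!\paren{\tfrac{3s}{2}}$, and tracking the powers of $\pi$ and the factors of $\tfrac14$ and $-\tfrac{1}{16\pi^4}$, I expect to land on exactly $\tfrac{1}{4\pi^{3s}\Gamma(3s/2)}\prod_{j,k}\Gamma\!\paren{\tfrac{s+\mu_j+\mu_k'}{2}}$.

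The main obstacle is bookkeeping rather than deep input: one must track the precise contour positions so that all the $\Gamma$-arguments have positive real part when the Barnes lemmas are applied (this is where $\Re(s)\ge 1$ enters), verify the balancing condition that makes Barnes' \emph{second} lemma applicable — which relies crucially on $\sum_i\mu_i=\sum_i\mu_i'=0$ — and correctly bookkeep the powers of $\pi$ and the constant $-\tfrac{1}{16\pi^4}$ against the $\tfrac{4}{\pi^2}$ appearing in the Mellin transform normalization in \eqref{eq:WhittakerMellinExpand}. I would also need a brief analytic-continuation remark to extend from $\Re(s)$ large (where the interchange of integrals is unconditional) down to $\Re(s)\ge 1$, using that both sides are holomorphic there. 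An alternative, which I would mention, is to avoid Barnes' lemmas entirely by instead unfolding the product of Whittaker functions via the Rankin--Selberg method against an Eisenstein series and reading off the archimedean local integral; but the Mellin--Barnes route above is the most self-contained given the tools already assembled in the excerpt.
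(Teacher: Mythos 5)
The paper does not prove this statement at all: it records Stade's formula as a theorem attributed to Stade and uses it as an external input (``Li applies this at $s=1$\,\dots''). Stade's own proof proceeds by induction on the rank, via the recursive integral representation of $W^*$ in terms of lower-rank Whittaker functions; for $GL(3)$ the archimedean evaluation reduces to the Mellin transform of a product of two $K$-Bessel functions, which \emph{is} a Barnes' \emph{first} lemma computation. So there is no proof in the paper to compare against, and the only question is whether your derivation stands on its own. I do not think it does.

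The first part of your argument is fine in outline: unfold one $W^*$ via \eqref{eq:WhittakerMellinExpand}, interchange for $\Re(s)$ large, and recognize the inner $dy$-integral as the Mellin transform of the other $W^*$. (Two small slips: $G(u,\mu')$ carries $u_1-\mu_i'$, not $u_1+\mu_i'$, and after $z_i=u_i/2$ the second denominator is $\Gamma\paren{\tfrac{3s}{2}-z_1-z_2}$, not $\Gamma(2s-z_1-z_2)$; also the power of $\pi$ produced by the shift is simply $\pi^{-3s}$.) The gap is in the closure step. For fixed $z_2$, the $z_1$-integral has the shape
\[
	\frac{1}{2\pi i}\int \frac{\Gamma(a_1+z_1)\Gamma(a_2+z_1)\Gamma(a_3+z_1)\,\Gamma(b_1-z_1)\Gamma(b_2-z_1)\Gamma(b_3-z_1)}{\Gamma(c+z_1)\,\Gamma(d-z_1)}\,dz_1,
\]
with three increasing, \emph{three} decreasing, and \emph{two} denominator gammas of opposite type. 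Barnes' second lemma has a different shape: three increasing, \emph{two} decreasing, and a \emph{single} denominator gamma of increasing type, with balance $f=a+b+c+d+e$. Your integral is not an instance of it, so the claim that it is ``precisely the setting of Barnes' second lemma'' is incorrect. Even for the broader $3$-$3$-$1$-$1$ (i.e.\ $_4F_3$-type) evaluations, the necessary balancing fails: with $a_i=-\tfrac{\mu_i}{2}$, $b_i=s-\tfrac{\mu_i'}{2}$, $c=z_2$, $d=\tfrac{3s}{2}-z_2$, one finds $\sum a_i+\sum b_j-c-d=\tfrac{3s}{2}$, which equals $1$ only at $s=\tfrac23$, outside the range $\Re(s)\ge1$. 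The same obstruction persists if you integrate in $z_2$ first or pass to the variable $z_1+z_2$.

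Your instinct that Barnes' first lemma is lurking is correct as far as it goes: pairing the $u_2$-factors gives $\tfrac{u_2+\mu_j}{2}+\tfrac{s-u_2+\mu_k'}{2}=\tfrac{s+\mu_j+\mu_k'}{2}$, exactly the gamma arguments in the answer. But passing from the $6$-gamma-over-$2$ Barnes integral down to a $2$-$2$ first-lemma configuration needs structural input that the double Mellin expansion alone does not supply — this is what the recursive Whittaker representation provides in Stade's proof, or what the Rankin--Selberg unfolding you mention as an alternative would provide. As written, the heart of your derivation is asserted rather than proved.
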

Li applies this at $s=1$, but this will make it impossible to obtain a function which is nicely holomorphic in the region we require.
Instead we will apply Stade's formula at $s=1+\Delta$, which results in a weight function on the spectral side which is actually too large.
So we replace $\hat{k}$ with $\hat{k}(\mu) = \tilde{k}(\mu) k_\text{adj}(\mu)$, with $k_\text{adj}$ as in \eqref{eq:kadjdef}, and the spectral side is now of the correct magnitude as a function of $\tilde{k}$.
We have chosen $k_\text{adj}$ symmetric in $\mu$ and invariant under $\mu \mapsto -\mu$ hence also $\mu \mapsto \wbar{\mu}$ for the Langlands parameters in the spectral decomposition and so $k_\text{adj}$ is real there.
On the spectral side, we replace $C(\mu) \mapsto C^*(\mu)$, which is now the collection of gamma functions coming from applying Stade's formula at $s=1+\Delta$ times $k_\text{adj}$ so that $C(\mu) \asymp C^*(\mu)$, and $C^*(\mu)$ is real in the spectral decomposition.

Before we truly start the simplification process, we must engage in a series of transformations:
Since $\psi_m(x) = \psi_{11}(m x m^{-1})$, and $\abs{m} m^{-1} \in V$, up to a multiple of $-1$, conjugating by $m^{-1}$ and by $tn^{-1}$ we have
\begin{align*}
	H_w =& \frac{2\abs{m_1 m_2 n_1 n_2}}{\pi (m_1 m_2)^2 C_w(n)} \int_{Y(\R)} \int_{U(\R)} \int_{\wbar{U}_w(\R)} k\paren{t^{-1} x^{-1} \alpha w tx'} \\
	& \qquad \psi_{11}(x) \wbar{\psi_t(x')} dx'\,dx\,C_w(t) t_1^{2+2\Delta} t_2^{1+\Delta}\, dt,
\end{align*}
where
\[ \alpha = m c w n^{-1} w^{-1} \equiv \Matrix{\alpha_1 \alpha_2 \\ & \alpha_1 \\ &&1} \pmod{\R^+}, \]
and $C_w(y)$ is the Jacobian of the change of variables $u \mapsto yuy^{-1}$ for $u \in \wbar{U}_w(\R)$.
Note that $\alpha_1$ and $\alpha_2$ may be negative!
Now interchange the $x$ and $x'$ integrals.
Then if $w x' \equiv x^* y^* \pmod{K}$ and $t^w = wtw^{-1}$, we may translate and invert $x^{-1} (\alpha t^w) x^* (\alpha t^w)^{-1} \mapsto x$.
\begin{align*}
	H_w =& \frac{2\abs{m_1 m_2 n_1 n_2}}{\pi (m_1 m_2)^2 C_w(n)} \int_{Y(\R)} \int_{\wbar{U}_w(\R)} \int_{U(\R)} k\paren{t^{-1} x \alpha t^w y^*} \wbar{\psi_{11}(x)} \\
	& \qquad \psi_{\alpha t^w}(x^*) \wbar{\psi_t(x')} dx\,dx' C_w(t) t_1^{2+2\Delta} t_2^{1+\Delta}\, dt.
\end{align*}
Since $k$ is a function on $K\backslash G/K$ (essentially the space of diagonal matrices by the singular value decomposition), it is invariant under transposition of its argument, so send $(\alpha t^w y^*)^{-1} x (\alpha t^w y^*) \mapsto x$ and transpose, giving
\begin{align*}
	H_w =& \frac{2\abs{m_1 m_2 n_1 n_2}}{\pi (m_1 m_2)^2 C_w(n)} \int_{Y(\R)} \int_{\wbar{U}_w(\R)} \int_{U(\R)} k\paren{\trans{x} \alpha t^w y^* t^{-1}} \wbar{\psi_{\alpha t^w y^*}}(x) dx \\
	& \qquad \psi_{\alpha t^w}(x^*) \wbar{\psi_t}(x') p_{2\rho}(\alpha t^w y^*) dx'\,C_w(t) t_1^{2+2\Delta} t_2^{1+\Delta} \, dt.
\end{align*}

We wish to apply spherical inversion, which will require some care with respect to convergence of the integrals, but the integral in $x$ can be evaluated explicitly as in the following lemma.
\begin{lem}[Fourier Transform of the Spherical Function]
\label{lem:FourierTransformoftheSphericalFunction} \ 

	For $\Re(\mu) = \paren{-\frac{1}{3}-\epsilon,-\frac{1}{3}-2\epsilon}$, let
	\[ X(y,\mu,\psi) = \int_{U(\R)} h_\mu(\trans{x}y) \psi(x) dx, \]
	where the integrals over $x_1$ and $x_2$ are taken in the limit sense \\ $\int_{-\infty}^\infty=\lim_{R\to\infty} \int_{-R}^R$, then
	\[ X(y,\mu,\psi) = \kappa W(y^{-1},-\mu,\psi) W(I,\mu,\psi), \]
	and the constant is given by
	\[ \frac{1}{\kappa} = \prod_{1\le i<j \le 3} B\paren{\frac{1}{2}, \frac{j-i}{2}} = 2\pi^2. \]
\end{lem}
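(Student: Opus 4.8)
The plan is to unfold the spherical function, push the $K$-average onto the opposite unipotent, separate variables, and recognize a product of two Jacquet integrals.

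\emph{Unfolding and interchange.} First I would insert $h_\mu(\trans x y) = \int_K p_{\rho+\mu}(k\trans x y)\,dk$ and interchange this with the (conditionally convergent, regularised) $x$-integral. The point of the hypothesis $\Re(\mu) = \paren{-\frac13-\epsilon,-\frac13-2\epsilon}$ is precisely that it places $\mu$ in the region where the Jacquet-type integrals occurring below converge absolutely, which legitimises this rearrangement.

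\emph{Parametrising the $K$-average.} Next, parametrise the $K$-average by $\wbar U(\R) = \trans{U}(\R)$: for $v \in \wbar U(\R)$ write the Iwasawa decomposition $v = n_v\,a_v\,k_v$ relative to $G = U(\R)\,Y(\R)\,K$, so that $\int_K \Phi(k)\,dk = \kappa \int_{\wbar U(\R)} \Phi(k_v)\,p_{2\rho}(a_v)\,dv$ with $\kappa = \paren{\int_{\wbar U(\R)} p_{2\rho}(a_v)\,dv}^{-1}$. Using $k_v = a_v^{-1}n_v^{-1}v$ together with the left-$U(\R)$-invariance, right-$K$-invariance, and torus-multiplicativity of $p_{\rho+\mu}$, this yields $h_\mu(\trans x y) = \kappa \int_{\wbar U(\R)} p_{\rho-\mu}(a_v)\,p_{\rho+\mu}(v\,\trans x\,y)\,dv$; note that the Jacobian $p_{+2\rho}$ shifts the torus exponent from $\rho+\mu$ to $\rho-\mu$, and this is what will produce the $-\mu$. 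Substituting into the definition of $X$ and changing variables $\trans x \mapsto v^{-1}v'$ in $\wbar U(\R)$ (for fixed $v$), which is legitimate because $\psi$ is a genuine character of $U(\R)$, so $\psi\paren{\trans{(v^{-1}v')}} = \wbar{\psi(\trans v)}\,\psi(\trans{v'})$, the double integral separates:
\[ X(y,\mu,\psi) = \kappa\paren{\int_{\wbar U(\R)} p_{\rho-\mu}(a_v)\,\wbar{\psi(\trans v)}\,dv}\paren{\int_{\wbar U(\R)} p_{\rho+\mu}(v'\,y)\,\psi(\trans{v'})\,dv'}. \]

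\emph{Identifying the factors and the constant.} Conjugating the integration variable by $w_l$ (which lies in $K$ and satisfies $w_l\wbar U(\R)w_l = U(\R)$) converts each $\wbar U$-integral into a standard Jacquet integral: the first factor is $W(I,-\mu,\psi)$ and the second is $W(y^{w_l};\mu,\psi)$, the characters being reconciled by the elementary $\psi_{t_1t_2}$-scaling relation for $W$ (here $y^{w_l}$ is the coordinate swap of $y^{-1}$, and $p_{-2\rho}$ is invariant under swapping a pair, which is what makes the bookkeeping close). One then invokes the functional equation $W^*(y,-\mu,\psi_{11}) = W^*\paren{(y_2,y_1),\mu,\psi_{11}}$, immediate from the symmetry $G\paren{(u_1,u_2),-\mu} = G\paren{(u_2,u_1),\mu}$ of the Mellin kernel in \eqref{eq:WhittakerMellinExpand}; applying it at $I$ and at $y^{-1}$ and taking the ratio, the two $\Lambda$-factors cancel in the product and we get $W(I,-\mu,\psi)\,W(y^{w_l};\mu,\psi) = W(I,\mu,\psi)\,W(y^{-1},-\mu,\psi)$, which is the claimed identity. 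Finally $\int_{\wbar U(\R)}p_{2\rho}(a_v)\,dv$ is evaluated by the archimedean Gindikin--Karpelevich formula, i.e. by iterating a one-variable beta integral over each of the three sub-diagonal coordinates of $v$, the coordinate in position $(j,i)$ contributing $B\paren{\tfrac12,\tfrac{j-i}{2}}$ (the exponent $\tfrac{j-i}{2}$ being $\tfrac12(\rho_i-\rho_j)$), which gives $\prod_{1\le i<j\le 3} B\paren{\tfrac12,\tfrac{j-i}{2}} = B\paren{\tfrac12,\tfrac12}^2 B\paren{\tfrac12,1} = \pi^2\cdot 2 = 2\pi^2$, so $\kappa = 1/(2\pi^2)$.

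\emph{Main obstacle.} The hard part is getting the Iwasawa parametrisation of the $K$-average and its Jacobian $p_{+2\rho}(a_v)$ exactly right -- this is precisely what forces the $y$-free factor to carry the exponent $\rho-\mu$ rather than $\rho+\mu$ -- and carrying out the separating change of variables while simultaneously justifying the rearrangement of the only conditionally convergent integral in play, the $x$-integral; this is the reason $\Re(\mu)$ must be pinned to the narrow strip in the hypothesis. Once the factorisation is established, the $w_l$-conjugations, the Whittaker functional equation, and the beta-integral evaluation of $\kappa$ are routine.
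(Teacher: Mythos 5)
Your proposal reproduces the structural skeleton of the paper's own argument -- the Bruhat-chamber parametrisation of the $K$-average with Jacobian $p_{2\rho}$, the separating change of variables (your $\trans{x}\mapsto v^{-1}v'$ is the paper's $xu\mapsto x$ written in $\wbar{U}$ rather than $U$), the identification of the two factors as Jacquet integrals, and the product-of-beta-functions evaluation of $\kappa$ -- so on those points you and the paper agree. The differences are cosmetic: the paper parametrises $K/V$ via $U(\R)$ and its transposes using the Terras change-of-variables lemma, while you work directly in $\wbar{U}(\R)$; and the paper passes from $W(\,\cdot\,,\mu^{w_l},\psi)$ to $W(\,\cdot\,,\mu,\psi)$ via permutation invariance of the pairing $W^*(\cdot,-\mu)W^*(\cdot,\mu)$, whereas you reach the same place via a $w_l$-conjugation and the Whittaker functional equation extracted from \eqref{eq:WhittakerMellinExpand}. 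These routes lead to the same result and are of comparable effort.

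The genuine gap is in the convergence justification, which is not a technicality here but the main content of the proof. You assert that the hypothesis $\Re(\mu)=\paren{-\tfrac13-\epsilon,-\tfrac13-2\epsilon}$ ``places $\mu$ in the region where the Jacquet-type integrals occurring below converge absolutely, which legitimises this rearrangement.'' That is false, and the paper says so explicitly: the two factors carry torus exponents $\rho-\mu$ and $\rho+\mu$, and no choice of $\mu$ places both in the region of absolute convergence. At the stated $\Re(\mu)$ the $\rho-\mu$ factor (the $u$-integral) converges absolutely but the $\rho+\mu$ factor (the $x$-integral) does not -- which is precisely why the statement defines the $x_1,x_2$ integrals in the symmetric limit sense. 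Your ``Main obstacle'' paragraph even concedes the $x$-integral is ``only conditionally convergent,'' directly contradicting the earlier claim. So Fubini is not available, and simply pointing to the strip in the hypothesis does nothing. The paper's actual mechanism -- isolating the inner $x_3$-integral $X_3(s_1,s_2)$, a H\"older bound and a derivative recurrence for it, integration by parts twice in each of $x_1,x_2$ over a nine-region decomposition of the $(x_1,x_2)$-plane, and then dominated convergence to move the $R\to\infty$ limit through the $\mu$- and $u$-integrals -- occupies the bulk of the proof of this lemma and is not replaceable by an absolute-convergence claim; your proposal supplies nothing in its place.
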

Note: We expect this formula to hold on $SL(n,\R)$ for arbitrary $n$, but the interchange of integrals might be difficult to justify.

Applying spherical inversion, we may shift the integrals in $\mu$ to \\ $\Re(\mu) = \paren{-\frac{1}{3}-\epsilon,-\frac{1}{3}-2\epsilon}$ and apply the above lemma.
Note that, despite appearances, we have an expression for $\abs{c_3(\mu)}^2$ which is analytic in $\mu$.
After moving $\mu$ back to the 0 lines, we have
\begin{align*}
	H_w &= -\frac{\abs{m_1 m_2 n_1 n_2}}{64\pi^7 (m_1 m_2)^2 C_w(n)} \int_{Y(\R)} \int_{\wbar{U}_w(\R)} \int_{\Re(\mu)=(0,0)} \frac{\hat{k}(\mu)}{\abs{c_3(\mu)}^2} \\
	& \qquad W((\alpha t^w y^*)^{-1} t,-\mu,\wbar{\psi_{\alpha t^w y^*}}) W(I,\mu,\wbar{\psi_{\alpha t^w y^*}}) d\mu\, \\
	& \qquad \psi_{\alpha t^w}(x^*) \wbar{\psi_t}(x') p_{2\rho}(\alpha t^w y^*) dx'\,C_w(t) t_1^{2+2\Delta} t_2^{1+\Delta} \, dt.
\end{align*}
We then return $\alpha t^w y^*$ to the argument of the Whittaker function:
\begin{align}
\label{eq:BeforeMellinExpansionOfWhittaker}
	H_w &= -\frac{\abs{m_1 m_2 n_1 n_2}}{64\pi^7 (m_1 m_2)^2 C_w(n)} \int_{Y(\R)} \int_{\wbar{U}_w(\R)} \int_{\Re(\mu)=(0,0)} \frac{\hat{k}(\mu)}{\abs{c_3(\mu)}^2} \\
	& \qquad W(t,-\mu,\psi_{11}) W(\alpha t^w y^*,\mu,\psi_{11}) d\mu\, \nonumber \\
	& \qquad \psi_{\alpha t^w}(x^*) \wbar{\psi_t}(x') dx'\,C_w(t) t_1^{2+2\Delta} t_2^{1+\Delta} \, dt. \nonumber
\end{align}

For all but the trivial term, we will use the Mellin expansion of the second Whittaker function.
We will shift the lines of integration in this Mellin expansion $\Re(u) \mapsto -\frac{1}{2}-10\epsilon$, picking up poles at $u_1 = \mu_i$ and $-u_2=\mu_i$.
As $\hat{k}$, $c_3$, and the product
\[ W(z,-\mu,\psi_{11})W(z',\mu,\psi_{11}) = \frac{W^*(z,-\mu,\psi_{11})W^*(z',\mu,\psi_{11})}{\Lambda(-\mu)\Lambda(\mu)} \]
are invariant under permutations of $\mu$, we may collect like terms, leaving us with four pieces:
The pole at $u_1 = \mu_2$, $u_2=-\mu_2$; the pole at $u_1=\mu_1$ with an integral along $\Re(u_2) = -\frac{1}{2}-10\epsilon$; the pole at $u_2=-\mu_2$ with an integral along $\Re(u_1) = -\frac{1}{2}-10\epsilon$; the double integral along $\Re(u) = -\frac{1}{2}-10\epsilon$.
Then we shift $\Re(\mu_1,\mu_2) \mapsto \paren{-\frac{1}{2}-9\epsilon,\frac{1}{2}+9\epsilon}$, which is possible since $\frac{\abs{c_n(\mu)}^{-2}}{\Lambda(\mu)\Lambda(-\mu)}$ has no poles.
Thus we are evaluating
\begin{align*}
	H_w = \frac{1}{(2 \pi i)^2 c_1 c_2} \int_{\Re(\mu)=\paren{-\frac{1}{2}-9\epsilon,\frac{1}{2}+9\epsilon}} \tilde{k}(\mu) J_{w,\mu}(\alpha) \, d\mu,
\end{align*}
with $J_{w,\mu}$ given in \eqref{eq:JwDef}.
Note that $T_w(u,y)$, initially given by
\begin{align*}
	T_w(u,y) =& \frac{\abs{m_1 m_2 n_1 n_2} c_1 c_2}{\pi^2 (m_1 m_2)^2 C_w(n)} \int_{Y(\R)} W(t,-\mu,\psi_{11}) X'_w(u,y, t) \\
	& \qquad (\pi\abs{y_1} t^w_1)^{1-u_1} (\pi\abs{y_2} t^w_2)^{1-u_2} C_w(t) t_1^{2+2\Delta} t_2^{1+\Delta} \, dt,
\end{align*}
simplifies to \eqref{eq:TwFinal}, since for every $w$, we have
\[ \frac{\abs{m_1 m_2 n_1 n_2} c_1 c_2}{\pi^2 (m_1 m_2)^2 C_w(n)} (\pi \beta_1) (\pi \beta_2) = 1, \quad C_w(t) (t^w_1 t^w_2) t_1^{2+2\Delta} t_2^{1+\Delta} = t_2^{3+2\Delta} t_1^{2+\Delta}. \]

We justify the interchange of $\mu$ and $x'$ integrals by explicitly computing $y^*$, which shows that, in general, $X'_w$ converges absolutely for some region in $\Re(u_1),\Re(u_2) < 0$.
This will further show that the $t$ integral converges absolutely as well as the sum of Kloosterman sums.
This step was a technical necessity, as we did not know it was safe to pull the $t$ integral inside the sum of Kloosterman sums until right now, but having done so, we may forget about the sum of Kloosterman sums.

Note that the function $X'_w$ is a type of generalized hypergeometric function; for fixed $\sgn(y)$, it is a function of four variables $\abs{y_1}, \abs{y_2}, t_1, t_2$ with two parameters $u_1$ and $u_2$.
We will leave this function in its current form, and bound it as in \cite{GS01}.

If $X'_w$ is absolutely bounded on some open set $u \in \mathcal{B}_w$, then by \propref{prop:MellinAbsWhittaker} below, we have absolute (and uniform on compact subsets) convergence of $T_w$ on $\Re(\mu) \in \mathcal{B}_w$,
\begin{align*}
	\mathfrak{u} \in& \set{(\mathfrak{u}_1,\mathfrak{u}_2):2-\mathfrak{u}^w_1+2\Delta > \max_i \Re(\mu_i), 1-\mathfrak{u}^w_2+\Delta > -\min_i \Re(\mu_i)}.
\end{align*}
Thus it is holomorphic and $T_w \ll_\mu \beta_1^{-\Re(u_1)} \beta_2^{-\Re(u_2)}$ where the constant depends polynomially on $\mu$.
Finally, we have the same for the $u$ integrals by \lemref{lem:GStarIntegral}.
Specifically, we choose $\mathfrak{u} = \Re(\mu_1,-\mu_2)-\epsilon$ and apply the comment following \lemref{lem:GStarIntegral} to the $u$ integrals of the $G^*$ function and its residues; the ratio $\frac{k_\text{adj}(\mu)}{\abs{c_3(\mu)}^2}$ is within a constant of $\prod_{j<k} \abs{\mu_j-\mu_k}^{1-\Delta}$.
Thus $J_{w,\mu}$ is holomorphic on $\Re(\mu) \in \mathcal{A}_w$ with
\begin{align*}
	\mathcal{A}_w =& \mathcal{B}_w \cap \big\{\eta: -1+\mathfrak{u}^w_2(\eta)-\Delta <\eta_1 \le \eta_3 \le \eta_2 < 2-\mathfrak{u}^w_1(\eta)+2\Delta, \\
	& \eta_1-\eta_2 > -2\big\},
\end{align*}
and the combined bound becomes
\begin{align*}
	J_{w,\mu}(\alpha) \ll& \beta_1^{-\Re(u_1)} \beta_2^{-\Re(u_2)} M_\text{sym}\paren{\kappa+3,\delta+1;\mu},
\end{align*}
where
\begin{align*}
	\kappa =& \Re(\mu_1-\mu_2)-\mathfrak{u}_1^w(\mu)-\mathfrak{u}_2^w(\mu)+2, \\
	\mathfrak{u}(\mu) =& \Re(\mu_1,-\mu_2), \\
	\delta =& \frac{1}{2}\Min{\Re(\mu_1)-\Re(\mu_2)-1, 3\Re(\mu_1),-3\Re(\mu_2)} \\
	& + \frac{1}{2}\min\{4-2\mathfrak{u}_1^w+2\Delta+\Re(\mu_1),2-\mathfrak{u}_1^w-\Re(\mu_2),\\
	& \qquad 2-2\mathfrak{u}_2^w-\Re(\mu_2),1-\mathfrak{u}_2^w-\Delta+\Re(\mu_1)\}.
\end{align*}

We conclude the construction of the formula with the following proposition:
\begin{prop}
\label{prop:MellinAbsWhittaker}
	Suppose $\mathfrak{s}_1-\Re(\mu_i), \mathfrak{s}_2+\Re(\mu_i) > 0$, then
	\begin{align*}
		& \int_{Y(\R)} \abs{W(t,-\mu,\psi_{11})} t_1^{\mathfrak{s}_1+1} t_2^{\mathfrak{s}_2+1} \, dt \ll \frac{M_\text{sym}\paren{\mathfrak{s}_1+\mathfrak{s}_2-\frac{1}{2}, \delta; \mu}}{\Lambda_\text{poly}(-\mu)},
	\end{align*}
	where
	\[ \delta = \min_i \set{\frac{2\mathfrak{s}_1+\Re(\mu_i)}{2},\frac{\mathfrak{s}_1-\Re(\mu_i)}{2},\frac{2\mathfrak{s}_2-\Re(\mu_i)}{2},\frac{\mathfrak{s}_2+\Re(\mu_i)}{2}}. \]
\end{prop}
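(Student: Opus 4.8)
This is the absolute-value analogue of the elementary Mellin identity
$\int_{Y(\R)} W^*(t,-\mu,\psi_{11}) t_1^{\mathfrak{s}_1+1} t_2^{\mathfrak{s}_2+1}\,dt = \frac{\pi^{-\mathfrak{s}_1-\mathfrak{s}_2}}{4} G\paren{(\mathfrak{s}_1,\mathfrak{s}_2),-\mu}$,
which is valid in precisely the range $\mathfrak{s}_1-\Re(\mu_i),\mathfrak{s}_2+\Re(\mu_i)>0$ of the hypothesis (this being the domain of absolute convergence of the Mellin transform of $W^*(\cdot,-\mu,\psi_{11})$, cf.\ the remarks after \eqref{eq:WhittakerMellinExpand}). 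Since the absolute value sits inside the integral we cannot use this identity directly; the plan is instead to bound $W^*$ pointwise by its Mellin--Barnes representation and then appeal to \lemref{lem:GStarIntegral}(d) for the dependence on $\mu$.

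Concretely, write $W(t,-\mu,\psi_{11})=W^*(t,-\mu,\psi_{11})/\Lambda(-\mu)$ and substitute into the integral the expansion \eqref{eq:WhittakerMellinExpand} with $\mu$ replaced by $-\mu$, the $u$-contour being at $\Re(u)=\mathfrak{u}$. Moving the absolute value inside the $u$-integral gives the pointwise bound
\[
	\abs{W(t,-\mu,\psi_{11})} \ll \frac{(\pi t_1)^{1-\mathfrak{u}_1}(\pi t_2)^{1-\mathfrak{u}_2}}{\abs{\Lambda(-\mu)}}\int_{\Re(u)=\mathfrak{u}}\abs{G(u,-\mu)}\,\abs{du},
\]
and since $G(u,-\mu)=\Lambda(-\mu)\,G^*(u,-\mu)$ the constant $\Lambda(-\mu)$ cancels, leaving exactly the quantity $\int_{\Re(u)=\mathfrak{u}}\abs{G^*(u,-\mu)}\,\abs{du}$ that is estimated (with the correct $\mu$-dependence, denominator $\Lambda_\text{poly}(-\mu)$ included) by \lemref{lem:GStarIntegral}(d) applied with $\mu$ replaced by $-\mu$. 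The remaining $t$-integral is $\int_{Y(\R)} t_1^{\mathfrak{s}_1-\mathfrak{u}_1-1} t_2^{\mathfrak{s}_2-\mathfrak{u}_2-1}\,dt_1\,dt_2$, and no single $\mathfrak{u}$ makes it converge at both $0$ and $\infty$; so I would split $Y(\R)$ according to $t_1\lessgtr 1$, $t_2\lessgtr 1$. On $t_1,t_2<1$ -- the region carrying the main term -- I put $\Re(u_i)=\mathfrak{s}_i-\epsilon$; on each region with exactly one index small I put $\Re(u_i)=\mathfrak{s}_i-\epsilon$ for that index and $\Re(u_j)=\mathfrak{s}_j+\epsilon$ for the large one, so that the total exponent $\mathfrak{u}_1+\mathfrak{u}_2-\frac12$ produced by \lemref{lem:GStarIntegral}(d) is still $\mathfrak{s}_1+\mathfrak{s}_2-\frac12$; and the fully large region $t_1,t_2>1$ I dispatch separately using the exponential decay of $W^*(\cdot,-\mu,\psi_{11})$ as $t\to\infty$, whose only modest polynomial dependence on $\mu$ makes that contribution negligible against the claimed bound. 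The hypotheses $\mathfrak{s}_1-\Re(\mu_i)>0$, $\mathfrak{s}_2+\Re(\mu_i)>0$ are exactly what place $\Re(u)=(\mathfrak{s}_1,\mathfrak{s}_2)$ strictly to the right of every pole of $G(u,-\mu)$, so for $\epsilon$ small these shifts cross nothing and the hypotheses of \lemref{lem:GStarIntegral}(d) are met at each contour; every $t$-integral then converges, contributing only $O(\epsilon^{-2})$, which together with the stray $\epsilon$'s in the exponents is absorbed into the $\epsilon$'s built into $M_\text{sym}$, and the invariance of $M_\text{poly}$ (hence of $M_\text{sym}$) under $\mu\mapsto-\mu$ restores $\mu$. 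Summing over the four regions gives the proposition.

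The step requiring care is the identification of the exponent $\delta$: one must check that the four families $\frac{2\mathfrak{s}_1+\Re(\mu_i)}{2}$, $\frac{\mathfrak{s}_1-\Re(\mu_i)}{2}$, $\frac{2\mathfrak{s}_2-\Re(\mu_i)}{2}$, $\frac{\mathfrak{s}_2+\Re(\mu_i)}{2}$ in the statement are precisely what \lemref{lem:GStarIntegral}(d) outputs at the contour $\Re(u)=(\mathfrak{s}_1,\mathfrak{s}_2)$ after the substitution $\mu\mapsto-\mu$ -- the distances from that contour to the nearest poles of $G^*(u,-\mu)$ and to the lines on which the six numerator gamma factors begin to dominate the denominator $\Gamma\paren{(u_1+u_2)/2}$ -- the substitution being harmless because in every application the set $\set{\Re(\mu_i)}$ is stable under negation. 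That identification, together with the separate handling of the $t\to\infty$ region, are the only non-routine points; the rest is the Stirling bookkeeping already packaged inside \lemref{lem:GStarIntegral}.
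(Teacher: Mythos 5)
The central mechanism is the paper's: split $Y(\R)$ at $t_1=1$, $t_2=1$, insert the Mellin--Barnes expansion \eqref{eq:WhittakerMellinExpand} for $W(t,-\mu,\psi_{11})$, pick $\Re(u)=\mathfrak{s}\pm\epsilon$ with signs fixed by convergence in each quadrant, and invoke \lemref{lem:GStarIntegral}(d). Your handling of the three quadrants with at least one $t_i<1$ is exactly the paper's one-line proof.

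The gap is the quadrant $t_1,t_2>1$, where you abandon the Mellin--Barnes bound in favor of ``exponential decay of $W^*$ as $t\to\infty$, whose only modest polynomial dependence on $\mu$ makes that contribution negligible.'' That uniformity claim is not available and is not a throwaway remark: the $SL(3)$ Whittaker function is only genuinely exponentially small once $t$ is large relative to $\norm{\mu}$; for $1<t_i\ll\norm{\mu}$ it is oscillatory, and the asymptotic $W^*(y,\mu,\psi_{11})\ll_\mu y_1^{1-t_1}y_2^{1-t_2}$ recorded after \eqref{eq:WhittakerMellinExpand} is written with an explicitly $\mu$-dependent constant. You would need a separate lemma to control that constant, and there is no need for one: at $\Re(u)=(\mathfrak{s}_1+\epsilon,\mathfrak{s}_2+\epsilon)$ the power of each $t_i$ in the combined integrand is $-1-\epsilon$ (so the integral over $t_1,t_2>1$ converges), the contour lies strictly right of all poles since $\mathfrak{s}_i$ already does, and \lemref{lem:GStarIntegral}(d) applies with $\mathfrak{u}_1+\mathfrak{u}_2-\tfrac12=\mathfrac{s}_1+\mathfrak{s}_2-\tfrac12+2\epsilon$. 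This puts the fourth quadrant on the same footing as the other three and closes the gap. Separately, your claim that ``in every application the set $\set{\Re(\mu_i)}$ is stable under negation'' is false --- e.g.\ $\eta=(-3\epsilon,\tfrac12+3\epsilon)$ in Table \ref{tab:FjContours} forces $\eta_3=-\tfrac12$, and $\set{-3\epsilon,\tfrac12+3\epsilon,-\tfrac12}$ is not symmetric. What \lemref{lem:GStarIntegral}(d) applied to $G^*(u,-\mu)$ actually produces is $\delta$ with all the signs of $\Re(\mu_i)$ reversed relative to the statement; you should record that $\delta$ rather than argue the two coincide.
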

\begin{proof}
Split the integrals at $t_1=1$,$t_2=1$, and apply \lemref{lem:GStarIntegral} part (4) to the Mellin expansion of the Whittaker function \eqref{eq:WhittakerMellinExpand} with $\mathfrak{u} = \mathfrak{s} \pm \epsilon$, and the sign of $\pm \epsilon$ determined for each coordinate as necessary for convergence:
This choice of parameters makes the total exponent of $t_1$ and $t_2$ equal to $-1\pm\epsilon$
\end{proof}

\subsection{Fourier Transform of the Spherical Function}
The proof of \lemref{lem:FourierTransformoftheSphericalFunction} has three parts:
First, we give an integral formula for the spherical function; this is a slight extension of a formula in Terras \cite[3.30]{T02}.
Second, we give a formal argument that interchanging integrals gives us a product of two integrals, and some translation shows these integrals are, in fact, each a Jacquet-Whittaker function.
Lastly, we justify the interchange of integrals in the absence of absolute convergence; essentially, we find an integral formula for the Jacquet-Whittaker function having a slightly larger region of absolute convergence.

We define the $K$-part function on $G$ as $K(xyk) = k$, then the power function identity \cite[3.17]{T02}
\[ p_{\rho+\mu}(K(\trans{x})z) = p_{\rho+\mu}(\trans{x}z) p_{-\rho-\mu}(\trans{x}) \]
comes from the decomposition $\trans{x} = x_1 y_1 k_1$:
\[ p_{\rho+\mu}(\trans{x}z) = p_{\rho+\mu}(x_1 y_1 k_1 z) = p_{\rho+\mu}(y_1) p_{\rho+\mu}(k_1 z). \]
We also need the change of variables formula \cite[Lemma 4.3.2]{T02}
\[ \int_{K/V} f(\bar{k}) d\bar{k} = \kappa \int_{U(\R)} f(K(\trans{x})V) p_{2\rho}(\trans{x}) dx, \]
where the measure on $K/V$ is again normalized to $\int_{K/V} d\bar{k} = 1$
Then we expand
\begin{align*}
	h_\mu(z) &= \int_{K/V} \sum_{v\in V} p_{\rho+\mu}(\bar{k}vz) \frac{d\bar{k}}{\abs{V}} \\
	&= \int_{K/V} p_{\rho+\mu}(\bar{k}z) d\bar{k} \\
	&= \kappa \int_{U(\R)} p_{\rho+\mu}(K(\trans{u})z) p_{2\rho}(\trans{u}) du \\
	&= \kappa \int_{U(\R)} p_{\rho+\mu}(\trans{u}z) p_{\rho-\mu}(\trans{u}) du,
\end{align*}
by substituting $v\bar{k}v \mapsto \bar{k}$ in the first integral.

Working formally, we apply this integral representation of $h_\mu$ to $X$, so we have
\[ X(y,\mu,\psi) = \kappa \int_{U(\R)} \int_{U(\R)} p_{\rho+\mu}(\trans{u}\trans{x}y) p_{\rho-\mu}(\trans{u}) \, du \, \psi(x) dx. \]
We then interchange the integrals and send $xu\mapsto x$, giving
\begin{align*}
	X = \kappa \int_{U(\R)} p_{\rho-\mu}(\trans{u}) \wbar{\psi(u)} \, du \int_{U(\R)} p_{\rho+\mu}(\trans{x}y) \, \psi(x) dx.
\end{align*}
Note that our assumptions on $\mu$ mean that the $u$ integral converges absolutely, while the $x$ integral does not; as no choice of $\mu$ can make both converge absolutely, we will carefully justify this interchange of integrals below.

To finish, we note a symmetry of the power function \cite[Prop. 4.2.1 (4)]{T02}:
With $\mu^{w_l} = (\mu_3,\mu_2,\mu_1)$, we have
\[ p_\mu(xy) = p_\mu(y) = p_{-\mu^{w_l}}(w_l y^{-1}) = p_{-\mu^{w_l}}(w_l \trans{\paren{xy}}^{-1}), \]
so the Jacquet-Whittaker function may be written as
\[ W(y^{-1},-\mu^{w_l},\psi) = \int_{U(\R)} p_{\rho+\mu}(\trans{x} y) \psi(x) dx, \]
by sending $x \mapsto vx^{-1}v$ for $v=\SmallMatrix{1\\&-1\\&&1}$ and noticing that $-\rho^{w_l} = \rho$.
Lastly, the product
\[ W(z,-\mu,\psi_{11})W(z',\mu,\psi_{11}) = \frac{W^*(z,-\mu,\psi_{11})W^*(z',\mu,\psi_{11})}{\Lambda(-\mu)\Lambda(\mu)} \]
is permutation-invariant in $\mu$, so we may replace $\mu^{w_l} \mapsto \mu$.
This gives the lemma.

Now to complete the proof, we must justify the interchange of the $x$ and $u$ integrals in the absence of absolute convergence.
Interpretting the $x_1$ and $x_2$ integrals in the limit sense, we have
\begin{align*}
	X =& \kappa \lim_{R\to\infty} \int_{[-R,R]^2\times\R} \int_{U(\R)} p_{\rho+\mu}(\trans{u}\trans{x}y) p_{\rho-\mu}(\trans{u}) \, du \, \psi(x) dx \\
	=& \kappa p_{-\rho+\mu}(y) \lim_{R\to\infty} \int_{U(\R)} \int_{\mathcal{X}(u,y,R)\times\R} p_{\rho+\mu}(\trans{x}) \psi^*(x) \, dx \, p_{\rho-\mu}(\trans{u}) \wbar{\psi(u)} \, du,
\end{align*}
where $\mathcal{X}(u,y,R)$ is the result of applying the inverse of $xu\mapsto x$, and $\trans{x} \mapsto y\trans{x}y^{-1}$ to the box $[-R,R]^2$, and $\psi^*(x) = \psi(y^{-1} x y)$.
Note the integrals inside the limit converge absolutely; thus we need only to rearrange the $x$ integral into an absolutely convergent form allowing us to pull the limit inside, by dominated convergence.
Then the above formal argument shows that the $x$ integral will actually converge to the appropriate Whittaker function by analytic continuation, and the $u$ integral will converge to the appropriate Whittaker function by definition.

To that end, we separate the $x_3$ integral
\begin{align*}
	&X_3(x_1, x_2, \mu) = \int_{\R} p_{\rho+\mu}(\trans{x}) dx_3 \\
	&= \int_{\R} \paren{1+x_1^2+x_3^2}^{-\frac{1+\mu_1+2\mu_2}{2}} \paren{1+x_2^2+(x_3-x_1 x_2)^2}^{-\frac{1+\mu_1-\mu_2}{2}} dx_3,
\end{align*}
and for convenience, we write $X_3(s_1, s_2) = X_3(x_1, x_2, \mu)$ where $s_1 = -\frac{1+\mu_1+2\mu_2}{2}$ and $s_2 = -\frac{1+\mu_1-\mu_2}{2}$.

A quick and useful bound for $X_3$ comes from applying H\"older's inequality:
\begin{align*}
	& \abs{X_3(s_1,s_2)} \ll_s \paren{1+x_1^2}^{\Re(s_1)+\frac{1}{2p}} \paren{1+x_2^2}^{\Re(s_2)+\frac{p-1}{2p}}, \nonumber
\end{align*}
assuming $\Re(s_1) < -\frac{1}{2p}$,$\Re(s_2)<-\frac{p-1}{2p}$.
For the moment, we may take $p=2$.

Substituting $x_3 \mapsto x_1 x_3$ gives
\[ X_3 = x_1^{2(s_1+s_2)+1} \int_{\R} \paren{x_1^{-2}+1+x_3^2}^{s_1} \paren{\frac{1+x_2^2}{x_1^2}+(x_3-x_2^2)^2}^{s_2} dx_3, \]
so
\begin{align*}
	\pderv{X_3}{x_1} =& \frac{2(s_1+s_2)+1}{x_1} X_3(s_1,s_2)-2\frac{s_1}{x_1} X_3(s_1-1,s_2) \\
	& \qquad -2s_2\frac{1+x_2^2}{x_1}X_3(s_1,s_2-1),
\end{align*}
and similarly for $\pderv{X_3}{x_2}$.

By comparing the powers of $x_1$ and $x_2$ in each of the partial derivatives of $X_3$ against the given bound for the corresponding $X_3(s_1-a,s_2-b)$, we see that integration by parts causes problems near zero, but will give us convergence on an integral which is bounded away from zero, so we now split the plane into four regions (nine total components) as $x_1$ and $x_2$ have magnitude smaller or larger than 1.
On the regions unbounded in $x_1$, we integrate by parts twice in $x_1$, similarly for $x_2$, and the boundary lines.

Note that the only dependence on $u$ in the $x$ integral is to position the center of the box $\mathcal{X}$.
The integrals over the various regions, boundary lines and points now converge absolutely, and by dominated convergence, we may move the limit inside the $s$ and $u$ integrals to obtain an absolutely convergent integral.
This causes the $x$ and $u$ integrals separate, and we also undo the substitution $\trans{x} \mapsto y\trans{x}y^{-1}$.
Thus the rearranged $x$ integral (now consisting of integrals over nine regions, twelve lines, and four points in the $x_1, x_2$ plane) is, by construction, an analytic continuation of the Whittaker function, originally defined on $\Re(s_1)<-\frac{1}{2},\Re(s_2)<-\frac{1}{2},\Re(s_1+s_2)<-\frac{3}{2}$ to the region $\Re(s_1)<\frac{1}{2},\Re(s_2)<\frac{1}{2},\Re(s_1+s_2)<\frac{1}{2}$, which in particular includes $\Re(\mu) = \paren{-\frac{1}{3}-\epsilon,-\frac{1}{3}-2\epsilon}$.

\subsection{The Weyl element terms of \thmref{thm:KuznetsovSimplification} and \propref{prop:JwHoloAndBd}.}
\label{sec:KSimpWeylTerms}
We now apply the results of section \ref{sec:TheGeneralTerm} to each of Weyl element terms one at a time to obtain the explicit forms of $J_I$ and $\alpha$ given in \thmref{thm:KuznetsovSimplification}, and to finish \propref{prop:JwHoloAndBd}, we must compute the set $\mathcal{B}_w$ described above.
As mentioned above, our construction of the $J_{w,\mu}$ functions relies on the absolute convergence the $X'_w$ integral, as well.

\subsubsection{Trivial Element Term}
Only occurs when $\abs{m} = \abs{n}$ and only for the $c=I$ term; the integral over $\wbar{U}_w(\R)$ is trivial as well.
$C_w(y)$ is just $1$ since we didn't actually do any substituting, $\alpha=I$, and $x^*=I$, $y^* = I$ since $I I$ is already of the form $x^* y^*$, so pulling the $t$ integral inside in \eqref{eq:BeforeMellinExpansionOfWhittaker} (justified by the absolute convergence of the interchanged form) gives
\begin{align*}
	H_w &= -\frac{\abs{m_1 m_2 n_1 n_2}}{64\pi^7 (m_1 m_2)^2} \int_{\Re(\mu)=(0,0)} \frac{\hat{k}(\mu)}{\abs{c_3(\mu)}^2 \Lambda(\mu) \Lambda(-\mu)} \\
	& \qquad \int_{Y(\R)} W^*(t,-\mu,\psi_{11}) W^*(t,\mu,\psi_{11}) t_1^{2+2\Delta} t_2^{1+\Delta} \, dt \, d\mu \\
	&= -\frac{1}{128 \pi^6} \int_{\Re(\mu)=(0,0)} \frac{\hat{k}(\mu)}{\abs{c_3(\mu)}^2 \Lambda(\mu) \Lambda(-\mu) C^*(\mu)} d\mu \\
	&= -\frac{1}{128 \pi^6} \int_{\Re(\mu)=(0,0)} \tilde{k}(\mu) J_I(\mu) \, d\mu.
\end{align*}

\subsubsection{Long Element Term}
The computational data that is required is
\[ \wbar{U}_w(\R) = U(\R), \quad C_w(y) = (y_1 y_2)^2, \quad t^w = \paren{\frac{1}{t_2},\frac{1}{t_1}}, \quad u^w=\paren{-u_2,-u_1} \]
\[ \alpha_1 = \frac{c_2 m_1 n_2}{c_1^2}, \qquad \alpha_2 = \frac{c_1 m_2 n_1}{c_2^2}, \]
\[ x^*_1 = -\frac{x_2'+x_1'x_3'}{1+x_2'^2+x_3'^2}, \qquad x^*_2 = -\frac{x_1'+x_2'(x_1'x_2'-x_3')}{1+x_1'^2+(x_1'x_2'-x_3')^2}, \]
\[ y^*_1 = \frac{\sqrt{1+x_1'^2+(x_1'x_2'-x_3')^2}}{1+x_2'^2+x_3'^2}, \qquad y^*_2 = \frac{\sqrt{1+x_2'^2+x_3'^2}}{1+x_1'^2+(x_1'x_2'-x_3')^2}, \]
so that we have
\begin{align}
\label{eq:XwlFinal}
	X'_{w_l}(u,v,\beta,t) &= \int_{U(\R)} \e{-v_1\frac{\beta_1}{t_2} x_1^*-v_2\frac{\beta_2}{t_1} x_2^*+t_1 x_1'+t_2 x_2'} \\
	& \qquad (y^*_1)^{1-u_1} (y^*_2)^{1-u_2}\,dx'. \nonumber
\end{align}

As the product $(y^*_1)^{1-u_1} (y^*_2)^{1-u_2}$ is actually
\[ \paren{1+x_2'^2+x_3'^2}^{\frac{-1+2u_1-u_2}{2}}\paren{1+x_1'^2+(x_1'x_2'-x_3')^2}^{\frac{-1-u_1+2u_2}{2}}, \]
we see that $X'_{w_l}$ converges absolutely (and uniformly on compact subsets) on
\[ \Re(u) \in \set{(\mathfrak{u}_1,\mathfrak{u}_2):2\mathfrak{u}_1-\mathfrak{u}_2<0, -\mathfrak{u}_1+2\mathfrak{u}_2<0}. \]
Thus it is holomorphic and bounded there.
The results of \propref{prop:JwHoloAndBd} for the $w_l$ term follow by simplification.

\subsubsection{\texorpdfstring{The $w_4$ Term}{The w4 Term}}
The computational data that is required is
\[ \wbar{U}_w(\R) = \set{\Matrix{1&x_2&x_3\\&1&0\\&&1}:x_2,x_3\in\R}, \]
\[ C_w(y) = y_1 y_2^2, \quad t^w = \paren{\frac{1}{t_1 t_2},t_1}, \quad u^w = \paren{u_2-u_1,-u_1}, \]
\[ \alpha_1 = \frac{c_2 m_1 n_1 n_2}{c_1^2} = \frac{m_1 m_2^2 n_2}{c_2^3 n_1}, \qquad \alpha_2 = \frac{c_1 m_2}{c_2^2 n_1} = 1 \]
\[ x^*_1 = \frac{x_3'}{1+x_2'^2+x_3'^2}, \qquad x^*_2 = -\frac{x_2'x_3'}{1+x_2'^2}, \]
\[ y^*_1 = \frac{\sqrt{1+x_2'^2}}{1+x_2'^2+x_3'^2}, \qquad y^*_2 = \frac{\sqrt{1+x_2'^2+x_3'^2}}{1+x_2'^2}, \]
so that we have
\begin{align}
\label{eq:Xw4Final}
	X'_{w_4}(u,v,\beta,t) &= \int_{\wbar{U}_w(\R)} \e{-v_1\frac{\beta_1}{t_1 t_2} x_1^*-t_1 x_2^*+t_2 x_2'} \\
	& \qquad (y^*_1)^{1-u_1} (y^*_2)^{1-u_2}\,dx'. \nonumber
\end{align}

Again, the product $(y^*_1)^{1-u_1} (y^*_2)^{1-u_2}$ is
\[ \paren{1+x_2'^2}^{\frac{-1-u_1+2u_2}{2}} \paren{1+x_2'^2+x_3'^2}^{\frac{-1+2u_1-u_2}{2}}, \]
so $X'_{w_4}$ converges in absolute value on
\[ \Re(u) \in \set{(\mathfrak{u}_1,\mathfrak{u}_2):2\mathfrak{u}_1-\mathfrak{u}_2<0,\mathfrak{u}_1+\mathfrak{u}_2<0}. \]
Thus it is holomorphic and bounded there.

\subsubsection{\texorpdfstring{The $w_5$ Term}{The w5 Term}}
The computational data that is required is
\[ \wbar{U}_w(\R) = \set{\Matrix{1&0&x_3\\&1&x_1\\&&1}:x_1,x_3\in\R}, \]
\[ C_w(y) = y_1^2 y_2, \quad t^w = \paren{t_2,\frac{1}{t_1 t_2}}, \quad u^w = \paren{-u_2,u_1-u_2}, \]
\[ \alpha_1 = \frac{c_2 m_1}{c_1^2 n_2} = 1, \qquad \alpha_2 = \frac{c_1 m_2 n_1 n_2}{c_2^2} = \frac{m_1^2 m_2 n_1}{c_1^3 n_2} \]
\[ x^*_1 = -\frac{x_1'x_3'}{1+x_1'^2}, \qquad x^*_2 = \frac{x_3'}{1+x_1'^2+x_3'^2}, \]
\[ y^*_1 = \frac{\sqrt{1+x_1'^2+x_3'^2}}{1+x_1'^2}, \qquad y^*_2 = \frac{\sqrt{1+x_1'^2}}{1+x_1'^2+x_3'^2}, \]
so that we have
\begin{align}
\label{eq:Xw5Final}
	X'_{w_5}(u,v,\beta,t) &= \int_{\wbar{U}_w(\R)} \e{-t_2 x_1^*-v_2\frac{\beta_2}{t_1 t_2} x_2^*+t_1 x_1'} \\
	& \qquad (y^*_1)^{1-u_1} (y^*_2)^{1-u_2}\,dx'. \nonumber
\end{align}
Symmetrically with $X'_{w_4}$, $X'_{w_5}$ is holomorphic and bounded on
\[ \Re(u) \in \set{(\mathfrak{u}_1,\mathfrak{u}_2):-\mathfrak{u}_1+2\mathfrak{u}_2<0,\mathfrak{u}_1+\mathfrak{u}_2<0}. \]

\section{\texorpdfstring{The Mellin-Barnes Representation of the $J_{w_l,\mu}$ Function}{The Mellin-Barnes Representation of the Jwl,mu Function}}
\label{sec:JwlMellinBarnes}

For $y \in Y(\R)$, we write $v = \sgn(y)$ and $\beta = \abs{y}$.
Starting with the $X'_{w_l}$ integral, we wish to separate the three $x'$ variables, so we start by noticing that $(x_1'x_2'-x_3')^2+x_1'^2+1 = (1+x_2'^2)x_1'^2-2x_1'x_2'x_3'+x_3'^2+1$; sending $x_1' \mapsto \frac{x_1'}{\sqrt{1+x_2'^2}}$ and $x_3' \mapsto x_3'\sqrt{1+x_2'^2}$ the expression becomes $(x_1'-x_2'x_3')^2+x_3'^2+1$ and lastly we send $x_1'-x_2'x_3' \mapsto x_1'\sqrt{1+x_3'^2}$:
\begin{align*}
	X'_{w_l} &= \int_{U(\R)} e\Biggl(v_1 \frac{\beta_1}{t_2} \frac{x_2'}{1+x_2'^2}+v_1 \frac{\beta_1}{t_2}\frac{x_1'x_3'}{(1+x_2'^2)\sqrt{1+x_3'^2}} \\
	& \qquad +v_2 \frac{\beta_2}{t_1} \frac{x_1'\sqrt{1+x_2'^2}}{(1+x_1'^2)\sqrt{1+x_3'^2}} \Biggr) \\
	& \qquad \e{t_1 \frac{x_2' x_3'}{\sqrt{1+x_2'^2}}+t_1 \frac{x_1'\sqrt{1+x_3'^2}}{\sqrt{1+x_2'^2}}+t_2 x_2'} \\
	& \qquad \paren{1+x_1'^2}^{\frac{-1-u_1+2u_2}{2}} \paren{1+x_2'^2}^{\frac{-1+2u_1-u_2}{2}} \paren{1+x_3'^2}^{\frac{-1+u_1+u_2}{2}} dx'.
\end{align*}

For each of the six terms in the exponential, we apply the inverse Mellin transform
\begin{align}
\label{eq:PsiThetaInvMellin}
	\e{x} &= \lim_{\theta\to\frac{\pi}{2}^-} \frac{1}{2\pi i} \int_{\Re(t) = c} \abs{2\pi x}^{-t} e^{it\theta \sgn(x)} \Gamma\paren{t} \, dt,
\end{align}
for $x \ne 0$ and $c > 0$, which follows from the definition of the gamma function and Mellin inversion.
We have the $X'_{w_l}$ function as the limit in $\theta$ of
\begin{align*}
	& X'_{w_l}(u,v,\beta,t,\theta) \\
	&= \frac{1}{(2\pi i)^6} \int_{\Re(r)=\nu} \int_{U(\R)} (4\pi^2 \beta_1)^{-r_1-r_2} (4\pi^2 \beta_2)^{-r_3} (2\pi t_1)^{r_3-r_4-r_5} (2\pi t_2)^{r_1+r_2-r_6} \\
	& \qquad \abs{x_1'}^{-r_2-r_3-r_5} \abs{x_2'}^{-r_1-r_4-r_6} \abs{x_3'}^{-r_2-r_4} \\
	& \qquad \exp -i\theta\paren{r_1 v_1 \sgn(x_2')+r_2 v_1 \sgn(x_1' x_3')+r_3 v_2 \sgn(x_1')} \\
	& \qquad \exp -i\theta\paren{r_4 \sgn(x_2' x_3')+r_5 \sgn(x_1')+r_6 \sgn(x_2')} \\
	& \qquad \paren{1+x_1'^2}^{\frac{-1-u_1+2u_2+2r_3}{2}} \paren{1+x_2'^2}^{\frac{-1+2u_1-u_2+2r_1+2r_2-r_3+r_4+r_5}{2}} \\
	& \qquad \paren{1+x_3'^2}^{\frac{-1+u_1+u_2+r_2+r_3-r_5}{2}} dx' \, \paren{\prod_{j=1}^6 \Gamma\paren{r_j}} \,dr.
\end{align*}
By absolute convergence, the limit in $\theta$ may be pulled outside the $J_{w_l, \mu}$ function, and even outside the sum of Kloosterman sums.

Collecting by sign gives
\begin{align*}
	X'_{w_l} &= \frac{4}{(2\pi i)^6} \int_{\Re(r)=\nu} (4\pi^2 \beta_1)^{-r_1-r_2} (4\pi^2 \beta_2)^{-r_3} (2\pi t_1)^{r_3-r_4-r_5} (2\pi t_2)^{r_1+r_2-r_6} \\
	& \Gamma\paren{r_1} \Gamma\paren{r_2} \Gamma\paren{r_3} \Gamma\paren{r_4} \Gamma\paren{r_5} \Gamma\paren{r_6} A_{w_l}'(r,v,\theta) \\
	& \int_{(\R^+)^3} x_1'^{-r_2-r_3-r_5} x_2'^{-r_1-r_4-r_6} x_3'^{-r_2-r_4} \paren{1+x_1'^2}^{\frac{-1-u_1+2u_2+2r_3}{2}} \\
	& \paren{1+x_2'^2}^{\frac{-1+2u_1-u_2+2r_1+2r_2-r_3+r_4+r_5}{2}} \paren{1+x_3'^2}^{\frac{-1+u_1+u_2+r_2+r_3-r_5}{2}} dx' \,dr,
\end{align*}
where
\begin{align*}
	A_{w_l}' =& \frac{1}{4} \sum_{\varepsilon_1,\varepsilon_2,\varepsilon_3\in\set{\pm 1}} \exp -i\theta (r_1 v_1 \varepsilon_2+r_2 v_1 \varepsilon_1 \varepsilon_3+r_3 v_2 \varepsilon_1 \\
	& \qquad +r_4 \varepsilon_2\varepsilon_3+r_5 \varepsilon_1+r_6 \varepsilon_2) \\
	=& \cos \theta(r_2 v_1+r_3 v_2+r_5) \cos \theta(r_1 v_1+r_4+r_6) \\
	& \qquad + \cos \theta(-r_2 v_1+r_3 v_2+r_5) \cos \theta(r_1 v_1-r_4+r_6),
\end{align*}
$\nu_1,\ldots,\nu_6 = \epsilon$ are small compared to $\Re(u_1-2u_2) > \frac{1}{2}$, $\Re(-2u_1+u_2) > \frac{1}{2}$, and $\Re(-u_1-u_2) > \frac{1}{2}$.

The inner integral may be evaluated by
\begin{align}
\label{eq:xSquaredPlusOneMellin}
	\int_0^\infty (1+x^2)^u x^t dx &= \frac{1}{2} B\paren{\frac{t+1}{2},\frac{-2u-t-1}{2}},
\end{align}
for $-1 < \Re(t) < -1-2\Re(u)$, which follows from the definition of the beta function $B(u,v) = \frac{\Gamma(u)\Gamma(v)}{\Gamma(u+v)}$.
This gives the Mellin-Barnes integral representation
\begin{align*}
	X'_{w_l} &= \frac{1}{2} \frac{1}{(2\pi i)^6} \int_{\Re(r)=\nu} (4\pi^2 \beta_1)^{-r_1-r_2} (4\pi^2 \beta_2)^{-r_3} (2\pi t_1)^{r_3-r_4-r_5} (2\pi t_2)^{r_1+r_2-r_6} \\
	& \qquad \Gamma\paren{r_1} \Gamma\paren{r_2} \Gamma\paren{r_3} \Gamma\paren{r_4} \Gamma\paren{r_5} \Gamma\paren{r_6} A_{w_l}'(r,v,\theta) \\
	& \qquad B\paren{\frac{1-r_2-r_3-r_5}{2},\frac{u_1-2u_2+r_2-r_3+r_5}{2}} \\
	& \qquad B\paren{\frac{1-r_1-r_4-r_6}{2},\frac{-2u_1+u_2-r_1-2r_2+r_3-r_5+r_6}{2}} \\
	& \qquad B\paren{\frac{1-r_2-r_4}{2},\frac{-u_1-u_2-r_3+r_4+r_5}{2}} \,dr,
\end{align*}
which converges absolutely for $0 < \theta < \frac{\pi}{2}$ because the exponential decay of the $\Gamma\paren{r_i}$ functions is not quite offset by the exponential growth of the $A_{w_l}'$ function.

Having a Mellin-Barnes integral for $X'_{w_l}$ means we may compute one for $T_{w_l}$ simply by pulling the $t$ integral inside and applying the Mellin transform of the Whittaker function:
\begin{align*}
	& T_{w_l}(v,\beta,\theta) = \\
	& \frac{2}{\pi^2} \frac{1}{(2\pi i)^6} \int_{\Re(r)=\nu} (\pi^2 \beta_1)^{-u_1-r_1-r_2} (\pi^2 \beta_2)^{-u_2-r_3} 2^{-r_1-r_2-r_3-r_4-r_5-r_6} \\
	& \qquad \Gamma\paren{r_1} \Gamma\paren{r_2} \Gamma\paren{r_3} \Gamma\paren{r_4} \Gamma\paren{r_5} \Gamma\paren{r_6} A_{w_l}'(r,v,\theta) \\
	& \qquad G^*\paren{\paren{2+2\Delta+u_2+r_3-r_4-r_5,1+\Delta+u_1+r_1+r_2-r_6}, -\mu} \\
	& \qquad B\paren{\frac{1-r_2-r_3-r_5}{2},\frac{u_1-2u_2+r_2-r_3+r_5}{2}} \\
	& \qquad B\paren{\frac{1-r_1-r_4-r_6}{2},\frac{-2u_1+u_2-r_1-2r_2+r_3-r_5+r_6}{2}} \\
	& \qquad B\paren{\frac{1-r_2-r_4}{2},\frac{-u_1-u_2-r_3+r_4+r_5}{2}} \,dr.
\end{align*}

Assuming absolute convergence at $\theta = \frac{\pi}{2}$ for some choice of the contours, we send $r \mapsto (s_1-u_1-t_1,t_1,s_2-u_2,t_1+t_2,1-r_1+s_2-t_1-t_2,1-r_2+s_1)$, so we may write
\begin{align}
\label{eq:TwlFinal}
	T_{w_l,\mu}(u, \alpha) =& \frac{1}{(2\pi i)^2} \int_{\Re(s)=\mathfrak{s}} \abs{4\pi^2 \alpha_1}^{-s_1} \abs{4\pi^2 \alpha_2}^{-s_2} \Gamma\paren{s_2-u_2} \\
	& \qquad N_{w_l}(s,\mu,\sgn(\alpha)) \,ds, \nonumber
\end{align}
\begin{align*}
	N_{w_l,1}(s,u,v) =& \frac{1}{2 \pi i} \int_{\Re(t_1)=\mathfrak{t}_1} \Gamma\paren{t_1} \Gamma\paren{s_1-u_1-t_1} R_{w_l}(s,u,v,t_1) \,dt_1, \\
	R_{w_l}(s,u,v,t_1) =& \frac{1}{(2 \pi i)^2} \int_{\Re(r)=\mathfrak{r}} G^*((1+2\Delta,\Delta)+r,-\mu) \Gamma\paren{1+s_1-r_2} \\
	& \qquad N_{w_l,2}(r,s,u,v,t_1) \,dr,
\end{align*}
\begin{align*}
	N_{w_l,2}(r,s,u,v,t_1) &= \frac{2^{r_1+r_2+u_1+u_2}}{2 \pi^2 (2\pi i)} \int_{\Re(t_2)=\mathfrak{t}_2} A_{w_l}(r,s,t,u,v) \\
	& \qquad \Gamma\paren{t_1+t_2} \Gamma\paren{1-r_1+s_2-t_1-t_2} \\
	& \qquad B\paren{\frac{r_1-2s_2+t_2+u_2}{2},\frac{1-r_1-t_2+u_1-u_2}{2}} \\
	& \qquad B\paren{\frac{r_2-2s_1-t_2+u_1}{2},\frac{r_1-r_2+t_2-u_1}{2}} \\
	& \qquad B\paren{\frac{1-2t_1-t_2}{2},\frac{1-r_1-u_1}{2}} \,dt_2,
\end{align*}
\begin{align*}
	A_{w_l} =& \sin \frac{\pi}{2}\paren{v_1 t_1+v_2(s_2-u_2)-r_1+s_2-t_1-t_2} \\
	& \qquad \qquad \sin \frac{\pi}{2}\paren{v_1(s_1-t_1-u_1)+t_1+t_2-r_2+s_1} \\
	& \qquad + \sin \frac{\pi}{2}\paren{-v_1 t_1+v_2(s_2-u_2)-r_1+s_2-t_1-t_2} \\
	& \qquad \qquad \sin \frac{\pi}{2}\paren{v_1(s_1-t_1-u_1)-t_1-t_2-r_2+s_1}.
\end{align*}
We leave the choice of contours and discussion of absolute convergence to sections \ref{sec:SumsOfKloostermanSums} and \ref{sec:Bounds}, but we should mention here that before taking the limit in $\theta$, we may freely shift contours -- avoiding the poles of the gamma functions -- again because the exponential decay of the gamma functions is not quite offset by the exponential growth of the $A_{w_l}$ function.

\section{Applications}
\label{sec:Applications}

\subsection{\texorpdfstring{Asymptotics of the $J_{w_l,\mu}$ Function}{Asymptotics of the Jwl,mu Function}}
\label{sec:JwlAsymptotic}
We want to achieve the highest power of the $\beta$ variables possible -- this gives the fastest convergence of the Kloosterman zeta function, so we want to move the $s$ variables as negative as possible.
As the $s$ variables are indirectly bounded below by $(\mu_1, -\mu_2)$, any terms which allow us to cross below those lines will be considered small.
Thus we only care about the $u = (\mu_1,-\mu_2)$ residue.
Then we shift the $s$ integrals back, with poles at $s_1 = \mu_1+t_1$ and $s_2 = -\mu_2$, and we shift the $t_1$ integral back, with a pole at $t_1 = 0$.
So far, we have
\begin{align}
\label{eq:JwlAsympViaRwl}
	J_{w_l,\mu}(y) \sim& \frac{3}{8 \pi^5} \abs{4\pi^2 y_1}^{-\mu_1} \abs{4 \pi^2 y_2}^{\mu_2} G^*_b(\mu) \\
	& \qquad \frac{k_\text{adj}(\mu)}{\abs{c_3(\mu)}^2} R_{w_l}((\mu_1,-\mu_2),(\mu_1,-\mu_2),v,0), \nonumber
\end{align}
as $y \to 0$.
This yields the error terms of \propref{prop:JwlAsymptotic}:
\begin{align}
\label{eq:Ewl1}
	E_{w_l,1}(\mu,y) =& \frac{-3i}{128 \pi^6} \frac{k_\text{adj}(\mu)}{\abs{c_3(\mu)}^2} \int_{\Re(u_1) = -\frac{1}{2}-10\epsilon} G^*_r(u_1,\mu) T_w((u_1,-\mu_2),\alpha) du_1 \\
\label{eq:Ewl2}
	E_{w_l,2}(\mu,y) =& \frac{-3i}{128 \pi^6} \frac{k_\text{adj}(\mu)}{\abs{c_3(\mu)}^2} \int_{\Re(u_2) = -\frac{1}{2}-10\epsilon} G^*_l(u_2,\mu) T_w((\mu_1,u_2),\alpha) du_2 \\
\label{eq:Ewl3}
	E_{w_l,3}(\mu,y) =& \frac{-3}{2^8 3 \pi^7} \frac{k_\text{adj}(\mu)}{\abs{c_3(\mu)}^2} \int_{\Re(u) = -\frac{1}{2}-10\epsilon} G^*(u,\mu) T_w(u,\alpha) du
\end{align}
\begin{align}
\label{eq:Ewl4}
	E_{w_l,4}(\mu,y) =& \frac{3}{32 \pi^5 (2\pi i)} \frac{6}{2\pi i} G^*_b(\mu) \frac{k_\text{adj}(\mu)}{\abs{c_3(\mu)}^2} \int_{\Re(s_1)=\Re(\mu_1)-\epsilon} \abs{4\pi^2 y_1}^{-s_1} \\
	& \qquad \abs{4\pi^2 y_2}^{\mu_2} N_{w_l,1}((s_1,\mu_2),(\mu_1,-\mu_2),\sgn(y)) \,ds_2, \nonumber
\end{align}
\begin{align}
\label{eq:Ewl5}
	E_{w_l,5}(\mu,y) =& \frac{3}{32 \pi^5 (2\pi i)^2} G^*_b(\mu) \frac{k_\text{adj}(\mu)}{\abs{c_3(\mu)}^2} \int_{\Re(s_2)=\Re(-\mu_2)-\epsilon} \abs{4\pi^2 y_2}^{-s_2} \\
	& \qquad \Gamma\paren{s_2+\mu_2} \int_{\Re(t_1)=\mathfrak{t}_1} \abs{4\pi^2 y_1}^{-\mu_1-t_1} \Gamma\paren{t_1} \nonumber \\
	& \qquad R_{w_l}((\mu_1+t_1,s_2),(\mu_1,-\mu_2),v,t_1) \,dt_1 \,ds_1, \nonumber
\end{align}
\begin{align}
\label{eq:Ewl6}
	E_{w_l,6}(\mu,y) =& \frac{3}{32 \pi^5 (2\pi i)^2} G^*_b(\mu) \frac{k_\text{adj}(\mu)}{\abs{c_3(\mu)}^2} \int_{\Re(s)=\Re(\mu_1,-\mu_2)-\epsilon} \abs{4\pi^2 y_1}^{-s_1} \\
	& \qquad \abs{4\pi^2 y_2}^{-s_2} \Gamma\paren{s_1-\mu_1} N_{w_l,1}(s,(\mu_1,-\mu_2),\sgn(y)) \,ds, \nonumber
\end{align}
\begin{align}
\label{eq:Ewl7}
	E_{w_l,7}(\mu,y) =& \frac{3}{32 \pi^5 (2\pi i)} G^*_b(\mu) \frac{k_\text{adj}(\mu)}{\abs{c_3(\mu)}^2} \int_{\Re(t_1)=-\epsilon} \abs{4\pi^2 y_1}^{-\mu_1} \abs{4\pi^2 y_2}^{\mu_2-t_1} \\
	& \qquad \Gamma\paren{t_1} R_{w_l}((\mu_1+t_1,-\mu_2),(\mu_1,-\mu_2),v,t_1) \,dt_1. \nonumber
\end{align}

Returning to \eqref{eq:JwDef}, \eqref{eq:TwFinal} and \eqref{eq:XwlFinal}, we may compute the main term explicitly.
For $\Re(\mu) = \paren{-\frac{1}{2}-\epsilon,\frac{1}{2}+\epsilon}$,
\begin{align*}
	& \lim_{y\to 0} \abs{\pi y_1}^{\mu_1} \abs{\pi y_2}^{-\mu_2} J_{w_l,\mu}(y) \\
	&= \frac{3}{32 \pi^5} G^*_b(\mu) \frac{k_\text{adj}(\mu)}{\abs{c_3(\mu)}^2} \int_{Y(\R)} W(t,-\mu,\psi_{11}) \\
	& \qquad \lim_{y\to 0} X'_{w_l}((\mu_1,-\mu_2),\sgn(y),\abs{y},t) t_1^{3+2\Delta-\mu_2} (\pi t_2)^{2+\Delta+\mu_1} \, dt,
\end{align*}
by dominated convergence.
The limit in $X'_{w_l}$ is actually a Whittaker function,
\begin{align*}
	\lim_{y\to 0} X'_{w_l}((\mu_1,-\mu_2),\sgn(y),\abs{y},t) =& \int_{U(\R)} \wbar{\psi_{t}}(x') {y_1^*}^{1-\mu_1} {y_2^*}^{1+\mu_2} dx' \\
	=& W(I, (\mu_2,\mu_3,\mu_1), \psi_{t}) \\
	=& t_1^{-1+\mu_2} t_2^{-1-\mu_1} W(t, (\mu_2,\mu_3,\mu_1), \psi_{11}),
\end{align*}
again, by dominated convergence.
Applying this to the limit of $J_{w_l,\mu}$ gives
\begin{align*}
	& \lim_{y\to 0} \abs{\pi y_1}^{\mu_1} \abs{\pi y_2}^{-\mu_2} J_{w_l,\mu}(y) \\
	&= \frac{3}{32 \pi^5} G^*_b(\mu) \frac{k_\text{adj}(\mu)}{\abs{c_3(\mu)}^2} \frac{1}{\Lambda(-\mu)\Lambda(\mu_2,\mu_3,\mu_1)} \\
	& \qquad \int_{Y(\R)} W^*(t,-\mu,\psi_{11}) W^*(t, \mu, \psi_{11}) t_1^{2+2\Delta} t_2^{1+\Delta} \, dt \\
	&= \frac{3}{128 \pi^{11}} G^*_b(\mu) \frac{k_\text{adj}(\mu)}{\abs{c_3(\mu)}^2} \frac{\prod_{j<k} \Gamma\paren{\frac{1+\Delta+\mu_k-\mu_j}{2}} \Gamma\paren{\frac{1+\Delta+\mu_j-\mu_k}{2}}}{\Lambda(-\mu)\Lambda(\mu_2,\mu_3,\mu_1)}
\end{align*}
thus $J_{w_l,\mu}(y) \sim \abs{\pi y_1}^{-\mu_1} \abs{\pi y_2}^{\mu_2} K_{w_l}(\mu)$.
This expression then agrees with right hand side of \eqref{eq:JwlAsympViaRwl} over the entire range of holomorphy by analytic continuation and we have \propref{prop:JwlAsymptotic}.
Note that we induced an asymmetry in the definition of the $J_{w_l,\mu}$ function, hence the asymmetry here; this is a subtle but important point as it allows us to avoid some symmetry requirements for the test functions of \thmref{thm:PartialInversion} and \thmref{thm:SumsOfKloostermanSums}.

\subsection{Partial Inversion Formula}
\label{sec:PartialInversion}
If we take our test function to be \eqref{eq:PartialInversionkhat} then in $H_{w_l}$, we move $\Re(q) \mapsto \Re(\mu_1,-\mu_2)+\epsilon$, and apply the asymptotics of $J_{w_l, \mu}$ at the double residue $q=(\mu_1,-\mu_2)$ gives \thmref{thm:PartialInversion} with
\begin{align}
\label{eq:F1}
	F_1(\hat{f}; y) =& \frac{1}{(2 \pi i)^2} \int_{\Re(\mu)=\eta} \hat{k}_{(\eta_1,-\eta_2)+\epsilon}(\mu) J_{w_l,\mu}(y) \, d\mu,
\end{align}
\begin{align}
\label{eq:F2}
	& F_2(\hat{f}; y) = \frac{1}{(2 \pi i)^2} \int_{\Re(\mu)=\eta} \int_{\Re(q_2) = -\eta_2+\epsilon} \hat{f}(\mu_1,q_2) \frac{J_{w_l,\mu}(y)}{K_{w_l}(\mu_1,-q_2)} \\
	& k_\text{conv}(\mu,(\mu_1,-q_2)) \frac{\paren{q_2+\mu_1}\paren{2\mu_1-q_2}\paren{2q_2-\mu_1}}{\prod_{j\ne 1}\paren{\mu_1-\mu_j}\paren{q_2+\mu_j}} \, dq_2 \, d\mu, \nonumber
\end{align}
\begin{align}
\label{eq:F3}
	& F_3(\hat{f}; y) = \frac{1}{(2 \pi i)^2} \int_{\Re(\mu)=\eta} \int_{\Re(q_1) = \eta_1+\epsilon} \hat{f}(q_1,-\mu_2) \frac{J_{w_l,\mu}(y)}{K_{w_l}(q_1,\mu_2)} \\
	& k_\text{conv}(\mu,(q_1,\mu_2)) \frac{\paren{q_1-\mu_2}\paren{2q_1+\mu_2}\paren{2\mu_2-q_1}}{\prod_{j\ne 2}\paren{q_1-\mu_j}\paren{\mu_j-\mu_2}} \, dq_1 \, d\mu. \nonumber
\end{align}
\begin{align}
\label{eq:Fjplus3}
	F_{j+3}(\hat{f}; y) =& \frac{1}{(2 \pi i)^2} \int_{\Re(\mu) = \eta} \hat{f}(\mu_1,-\mu_2) \frac{E_{w_l,j}(\mu,y)}{K_{w_l}(\mu)} d\mu.
\end{align}

It may be possible to study the Kloosterman zeta functions directly by simply not integrating over $q$ in $\hat{k}$; this would require a test function $\hat{k}$ which cancels the intermediate terms in $H_{w_l}$ and $J_{w_l}$ (the terms with a residue at one of $q_1$ or $q_2$, but not both, and the terms with a residue at one of $s_1$ or $s_2$, but not both, and the term with a residue in $t_1$).

\subsection{Sums of Kloosterman Sums}
\label{sec:SumsOfKloostermanSums}
We are now ready for the proof of \thmref{thm:SumsOfKloostermanSums}.
Let $g(y) = f(X y_1,Y y_2)$, then the assumption that $f$ have compact support is not strictly necessary, we merely need holomorphy of $\hat{g}$ on $\Re(q_1),\Re(q_2)\in \paren{-\frac{1}{2}-\epsilon,-\epsilon}$ and the bound
\[ \hat{g}(q) \ll \frac{X^{-\Re(q_1)}}{\abs{q_1}^8}\frac{Y^{-\Re(q_2)}}{\abs{q_2}^8} \ll X^{-\Re(q_1)} Y^{-\Re(q_2)} \abs{q_1 q_2 (q_2-q_1)}^{-4}, \]
which follows by integration by parts eight times in each $y$ variable.
\thmref{thm:SumsOfKloostermanSums} follows from \thmref{thm:PartialInversion} by fixing the contours of the error terms and those of the cusp form terms, Eisenstein series terms, and non-long-element Kloosterman sum terms and justifying their absolute convergence in the new locations.
Specifically, we want to shift the contours in $q$ as far to the right as possible.

For the cusp form terms in \eqref{eq:LisKuznetsovFormula}, we may shift the $q$ contours of $\hat{k}$ up to $\mathfrak{q}=-\frac{5}{14}-\epsilon$ without encountering poles at any of the $q_1-\mu_i$ or $q_2+\mu_i$ terms, thanks to the Kim-Sarnak result.
The $K_{w_l}(q_1,-q_2)^{-1}$ term has poles at $-q_1-q_2=0$, $-2q_1+q_2=0$ and $q_1-2q_2=0$, but we need not encounter these and they are cancelled by the terms in the numerator as well.
Now the mean-value estimates of \thmref{thm:MeanValueEstimates} show that the sum over the cusp forms converges absolutely so we have the bound $(XY)^{\frac{5}{14}+\epsilon}$ here.

The terms in \eqref{eq:LisKuznetsovFormula} for both types of Eisenstein series have $\hat{k}$ evaluated at $\Re(\mu) = 0$, as does the trivial Weyl element term, and the sums of Kloosterman sums at the $w_4$ and $w_5$ Weyl elements still converge absolutely at $\Re(\mu)=(-\epsilon,\epsilon)$ so for each of these terms we may shift the $q$ contours to $\mathfrak{q}=-2\epsilon$.
Again, absolute convergence gives $(XY)^{2\epsilon}$.

In the section on bounds, we will show the $F_j$ error terms of \thmref{thm:PartialInversion} are all bounded by $(XY)^{10\epsilon}(X^{\frac{1}{2}}+Y^{\frac{1}{2}})$ by taking the contours as in \tableref{tab:FjContours}, with $\mathfrak{s} = -\frac{1}{2}-\epsilon$.
Our choice of $\mathfrak{s}$ maintains the absolute convergence of the Kloosterman zeta functions and the exponent on the bounds come from $\mathfrak{q}$.
Lastly, we use $\Re(\mu)=\paren{-2\epsilon,\epsilon}$ for the $w_4$ term and $\eta=\paren{-\epsilon,2\epsilon}$ for the $w_5$ term.
The choice of contours for the $w_4$ and $w_5$ terms are for convenience and to reduce the number of derivatives necessary in \thmref{thm:SumsOfKloostermanSums}.

\begin{landscape}
\begin{table}
\begin{tabular}{rcccccc}
	j & $\mathfrak{q}$ & $\eta$ & $\mathfrak{u}$ & $\mathfrak{r}$ & $\mathfrak{t}$ \\
	\hline
	1 & $-\epsilon$ & $\paren{-\frac{1}{2}-3\epsilon,\frac{1}{2}+3\epsilon}$ & -- & -- & -- \\

	2 & $\paren{\mathdash,-\epsilon}$ & $\paren{-\frac{1}{2}-3\epsilon,\frac{1}{2}+3\epsilon}$ & -- & -- & -- \\

	3 & $\paren{-\epsilon,\mathdash}$ & $\paren{-\frac{1}{2}-3\epsilon,\frac{1}{2}+3\epsilon}$ & -- & -- & -- \\

	4 & -- & $\paren{-3\epsilon,\frac{1}{2}+3\epsilon}$ & $\paren{-\frac{1}{2}-3\epsilon,\mathdash}$ & $\paren{\frac{1}{2}-4\epsilon,100\epsilon}$ & $\epsilon$ \\

	5 & -- & $\paren{-\frac{1}{2}-3\epsilon,3\epsilon}$ & $\paren{\mathdash,-\frac{1}{2}-3\epsilon}$ & $\paren{\frac{1}{2}-4\epsilon,100\epsilon}$ & $\epsilon$ \\

	6 & -- & $(-3\epsilon,3\epsilon)$ & $-\frac{1}{2}-3\epsilon$ & $\paren{\frac{1}{2}-4\epsilon,100\epsilon}$ & $\epsilon$ \\

	7 & -- & $\paren{-\epsilon,\frac{1}{2}+3\epsilon}$ & -- & $\paren{-4\epsilon,100\epsilon}$ & $\paren{\epsilon,\frac{1}{2}-\epsilon}$ \\

	8 & -- & $\paren{-\frac{1}{2}-3\epsilon,\epsilon}$ & -- & $\paren{-3\epsilon,7\epsilon}$ & $\paren{6\epsilon,\frac{1}{2}-6\epsilon}$ \\

	9 & -- & $(-\epsilon,\epsilon)$ & -- & $0$ & $\epsilon$ \\

	10 & -- & $\paren{-\epsilon,\frac{1}{2}+3\epsilon}$ & -- & $\paren{\epsilon,4\epsilon}$ & $\paren{-\frac{1}{2}+5\epsilon,1-10\epsilon}$
\end{tabular}
\caption{Contours for the $F_j$ error terms.}
\label{tab:FjContours}
\end{table}
\end{landscape}

\section{Bounds for the Mellin-Barnes Integrals}
\label{sec:Bounds}
We are left with two items to prove, which are essentially the same:
First, completing \thmref{thm:KuznetsovSimplification} requires justifying the growth hypothesis on $\hat{k}$, in other words, bounding $J_{w,\mu}$, which is also desirable for \thmref{thm:MeanValueEstimates}.
Second, the evaluation of the integral transforms, the asymptotics of $E_{w_l,j}$ and $F_j$ given in \propref{prop:JwlAsymptotic} and \thmref{thm:PartialInversion}, and the completion of \thmref{thm:SumsOfKloostermanSums} all require absolute convergence of the Mellin-Barnes integrals.

It is difficult to obtain a general bound for $T_{w_l}$ and $J_{w_l,\mu}$ that works for all ranges of the $\eta$ and $\mathfrak{s}$ parameters, hence it is also difficult to show that these functions converge absolutely over any range of $\Re(\mu)$.
Therefore, we will not actually show that these functions are holomorphic over any such range.
This leads one to question whether it is valid to shift contours as we have freely done; for the skeptical reader, we have a simple justification:
Do the shifting before taking the limit in $\theta$ back in the original construction.
As we have the bound $A_{w_l}' \prod_{j=1}^6 \Gamma(r_j) \ll \prod_{j=1}^6 \abs{r_j}^{\Re(r_j)-\frac{1}{2}} \exp\paren{\paren{\theta-\frac{\pi}{2}}\abs{\Im(r_j)}}$, both convergence and the validity of the shifts are obvious.
Then we only require that the end product converges absolutely at $\theta=\frac{\pi}{2}$, and that is what we will show.

The fundamental asymptotic here is Stirling's formula:
For $\Re(z)$ in a compact subset of $\R$ (not containing a pole of the gamma function),
\begin{align*}
	\abs{\Gamma(z)} &\sim \sqrt{2\pi} \abs{z}^{\Re(z)-\frac{1}{2}} e^{-\frac{\pi}{2}\abs{\Im(z)}},
\end{align*}
which leads us to integrals of products in the form
\begin{align}
\label{eq:GeneralProductInt}
	\int_{\Re(u)=\mathfrak{u}} \prod_i \abs{a_{i,1} u_1 + \ldots + a_{i,n} u_n + b_{i,1} v_1 + \ldots + b_{i,m} v_m}^{c_i} \, \abs{du},
\end{align}
where $a_{i,j},\mathfrak{u}_i,c_i \in \R$ are fixed, with $v_i \in \C$ having fixed real part.
Note that for $a$ and $c$ non-zero and fixed, $b\in\R$, we have $\abs{a+bi} \asymp \abs{c+bi}$.
Provided the exponents are not somehow accumulating on any subspace, we would expect such an integral to converge when $\sum_i c_i < -n-1$, and we give a series of lemmas designed to show that these converge in our situation.

Bounds for integrals of the above type are derived from H\"older's inequality and the following lemma:
\begin{lem}
\label{lem:PairConvLemma}
	Suppose $a_1+a_2 < -1$ with $a_1$ and $a_2$ fixed, and $s \ge 0$, then
	\[ \int_{-\infty}^\infty \abs{1+i(s+t)}^{a_1} \abs{1+i(s-t)}^{a_2} dt \ll \abs{1+is}^{\Max{a_1,a_2,a_1+a_2+1}}. \]
\end{lem}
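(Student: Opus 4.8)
The plan is to view this as a standard estimate for the ``convolution'' of two power-law bumps, one centred at $t=-s$ and one at $t=s$, a distance $2s$ apart. First I would use $\abs{1+ix}\asymp 1+\abs{x}$ to reduce to bounding $J:=\int_{\R}(1+\abs{t+s})^{a_1}(1+\abs{t-s})^{a_2}\,dt$; note that the ``$1+$'' means the integrand has no local singularities, so the size of $J$ is controlled entirely by the interplay of the two bumps. Since $s\ge 0$ I may assume $s\ge 1$ — if $s<1$ then $\abs{1+is}\asymp 1$ and $J\ll\int_\R(1+\abs t)^{a_1+a_2}\,dt\ll 1$ because $a_1+a_2<-1$ — and the substitution $t\mapsto -t$ swaps $a_1\leftrightarrow a_2$, so the statement is symmetric in $a_1,a_2$.

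Then I would split $\R$ into the ranges $t\le -s$, $-s\le t\le s$, $t\ge s$. The substitutions $w=-(t+s)$, $w=t+s$, $w=t-s$ turn these into $\int_0^\infty(1+w)^{a_1}(1+w+2s)^{a_2}\,dw$, $\int_0^{2s}(1+w)^{a_1}(1+2s-w)^{a_2}\,dw$, and $\int_0^\infty(1+w+2s)^{a_1}(1+w)^{a_2}\,dw$. So everything reduces to two elementary one-variable bounds: (i) $\int_0^\infty(1+w)^A(1+w+\sigma)^B\,dw\ll(1+\sigma)^{\Max{B,\,A+B+1}}$ when $A+B<-1$ and $\sigma\ge 1$; and (ii) $\int_0^{2s}(1+w)^{a_1}(1+2s-w)^{a_2}\,dw\ll(1+s)^{\Max{a_1,a_2,\,a_1+a_2+1}}$.

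For (i) I would split the $w$-integral at $w=\sigma$: on $w\ge\sigma$ one has $1+w+\sigma\asymp 1+w$, so that piece is $\asymp\int_\sigma^\infty(1+w)^{A+B}\,dw\asymp\sigma^{A+B+1}$ (convergent since $A+B+1<0$); on $w\le\sigma$ one has $1+w+\sigma\asymp\sigma$, so that piece is $\asymp\sigma^B\int_0^\sigma(1+w)^A\,dw$, and $\int_0^\sigma(1+w)^A\,dw$ is $\ll 1$ for $A<-1$ and $\asymp\sigma^{A+1}$ for $A>-1$, giving $\ll\sigma^{\Max{B,\,A+B+1}}$ (up to a $\log$ when $A=-1$, discussed below). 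Bound (ii) goes the same way after splitting at $w=s$: on each half one of the two factors is $\asymp s$ (since $s\ge 1$) and the surviving factor integrates to $\Max{1,\,s^{a_i+1}}$, producing $\Max{s^{a_1},s^{a_2},s^{a_1+a_2+1}}$. Feeding (i) back with $(A,B,\sigma)=(a_1,a_2,2s)$ and with $(a_2,a_1,2s)$, using $1+2s\asymp 1+s$, together with (ii) for the middle range, and summing the three contributions yields $J\ll(1+s)^{\Max{a_1,a_2,\,a_1+a_2+1}}$, which is the claim.

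I expect the only real fuss to be the bookkeeping inside (i) and (ii): one must check across the cases (both exponents $<-1$; exactly one $<-1$; both $\ge -1$) that the exponent obtained is exactly $\Max{a_1,a_2,a_1+a_2+1}$ and never larger. The single borderline case is when some exponent equals $-1$, where $\int_0^\sigma(1+w)^{-1}\,dw=\log(1+\sigma)$ contributes a logarithm; since $\log(1+\sigma)\ll_\delta(1+\sigma)^\delta$ this costs at most an arbitrarily small amount in the exponent and is immaterial for the applications, where the exponents produced by Stirling's formula are displaced off such integers by the $\epsilon$'s in the contours.
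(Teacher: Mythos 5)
Your proof is correct and takes essentially the same approach as the paper: both handle $s<1$ trivially, then split $\R$ into intervals around the concentration points $\pm s$ and reduce each piece to elementary one-variable power integrals of the type $\int_0^u(1+t)^\alpha\,dt$ and $\int_u^\infty(1+t)^\alpha\,dt$. The only cosmetic difference is the choice of break points ($\pm s$ here versus $0,\pm 2s$ in the paper, followed in both cases by a second split inside each piece), and your explicit remark about the logarithmic correction when an exponent equals $-1$ is a borderline case the paper leaves implicit but which never arises in the applications.
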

\begin{proof}
We will make repeated use of the integrals
\[ \int_0^u \abs{1+it}^\alpha dt = u \, \pFq{2}{1}{\frac{1}{2}, -\frac{\alpha}{2}}{\frac{3}{2}}{-u^2} \ll \abs{1+iu}^{\Max{0,\alpha+1}} \]
and
\[ \int_u^\infty \abs{1+it}^\alpha dt \le u^{\alpha+1} \int_1^\infty t^\alpha dt \ll \abs{1+iu}^{\alpha+1}, \qquad \Re(\alpha) < -1, \]
for $u \ge 1$.

The result is obvious if $s < 1$, so we assume $s \ge 1$, and split the integral at $-2s$,$0$, and $2s$, call the resulting integrals $I_1$, $I_2$, $I_3$, and $I_4$, say.
For the first integral, we substitute $t \mapsto -t-2s$, so it becomes $I_1= I_{1,a} + I_{1,b}$:
\begin{align*}
	I_{1,a} =& \int_0^s \abs{1+i(s+t)}^{a_1} \abs{1+i(t+3s)}^{a_2} dt \\
	\asymp& \abs{1+is}^{a_1+a_2} \int_0^s dt \\
	\ll& \abs{1+is}^{a_1+a_2+1}, \\
	I_{1,b} =& \int_s^\infty \abs{1+i(s+t)}^{a_1} \abs{1+i(t+3s)}^{a_2} dt \\
	\asymp& \int_s^\infty \abs{1+it}^{a_1+a_2} dt \\
	\ll& \abs{1+is}^{a_1+a_2+1},
\end{align*}
and similarly for the remaining terms.
\end{proof}

We will occasionally encounter positive exponents in the integrals of type \eqref{eq:GeneralProductInt}, but thankfully these always occur in the terms coming from the beta functions, so we have
\begin{lem}
\label{lem:BetaConvLemma}
	Suppose $\Re(v) = \mathfrak{v}$ with $\mathfrak{v}_1 + \mathfrak{u}$, $\mathfrak{v}_2 - \mathfrak{u}$ not non-positive integers, and $\mathfrak{u}$, $\mathfrak{v}$ fixed, and $p > 0$, then
	\[ \paren{\int_{\Re(u) = \mathfrak{u}} \abs{B(v_1+u,v_2-u)}^p \abs{du}}^{\frac{1}{p}} \ll \abs{v_1+v_2}^{\Max{\mathfrak{u}-\mathfrak{v}_2,-\mathfrak{u}-\mathfrak{v}_1,\frac{1}{p}-\frac{1}{2}}}. \]
\end{lem}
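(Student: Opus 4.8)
\section*{Proof proposal for Lemma~\ref{lem:BetaConvLemma}}

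The plan is to reduce everything to Stirling's formula together with the one-variable estimates already established in the proof of Lemma~\ref{lem:PairConvLemma}. First I would use $B(v_1+u,v_2-u)=\Gamma(v_1+u)\Gamma(v_2-u)/\Gamma(v_1+v_2)$, the point being that the \emph{last} argument $v_1+v_2$ is constant along $\Re(u)=\mathfrak{u}$, so $\Gamma(v_1+v_2)$ pulls out of the integral; if $v_1+v_2$ happens to be a pole of $\Gamma$, then since $\mathfrak{v}_1+\mathfrak{u}$ and $\mathfrak{v}_2-\mathfrak{u}$ are not non-positive integers we have $B\equiv 0$ and there is nothing to prove. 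Applying Stirling to all three gamma factors — valid because the hypothesis keeps $\Re(v_1+u)=\mathfrak{v}_1+\mathfrak{u}$ and $\Re(v_2-u)=\mathfrak{v}_2-\mathfrak{u}$ at positive distance from the poles, and $\Gamma$ has no zeros — and writing $u=\mathfrak{u}+it$, $w:=\Im(v_1+v_2)$, $a:=p(\mathfrak{v}_1+\mathfrak{u}-\tfrac12)$, $b:=p(\mathfrak{v}_2-\mathfrak{u}-\tfrac12)$, one is left to bound $\abs{\Gamma(v_1+v_2)}^{-p}$ times a real integral. Here I would invoke the elementary identity $\abs{\Im(v_1)+t}+\abs{\Im(v_2)-t}=\Max{\abs{w},2\abs{\sigma}}$ after the substitution $\sigma=\Im(v_1)+t-\tfrac{w}{2}$, which both makes the integrand depend only on $\sigma$ and $w$ and exhibits the exponential factor as $e^{-\frac{p\pi}{2}\abs{w}}\,e^{-p\pi\,\mathrm{dist}(\sigma,[-\abs{w}/2,\abs{w}/2])}$. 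Writing $W:=\abs{1+iw}$ (so $W\asymp\abs{v_1+v_2}$, since $\mathfrak{v}_1+\mathfrak{v}_2$ is fixed), the problem collapses to
\[
\mathcal{I}:=\int_{\R}\abs{1+i(\tfrac{w}{2}+\sigma)}^{a}\abs{1+i(\tfrac{w}{2}-\sigma)}^{b}\,e^{-p\pi\,\mathrm{dist}(\sigma,[-\abs{w}/2,\abs{w}/2])}\,d\sigma\ \ll\ W^{\Max{a,b,a+b+1}}.
\]

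Granting this, the lemma is pure arithmetic: combining the bound for $\mathcal{I}$ with $\abs{\Gamma(v_1+v_2)}^{-p}\ll W^{p(1/2-\mathfrak{v}_1-\mathfrak{v}_2)}e^{\frac{p\pi}{2}\abs{w}}$ (Stirling again, used as a lower bound for $\abs{\Gamma}$), the two exponentials cancel and
\[
\paren{\int_{\Re(u)=\mathfrak{u}}\abs{B(v_1+u,v_2-u)}^{p}\abs{du}}^{1/p}\ll W^{(1/2-\mathfrak{v}_1-\mathfrak{v}_2)+\frac1p\Max{a,b,a+b+1}}=W^{\Max{\mathfrak{u}-\mathfrak{v}_2,\,-\mathfrak{u}-\mathfrak{v}_1,\,\frac1p-\frac12}},
\]
since $\tfrac{a}{p}=\mathfrak{v}_1+\mathfrak{u}-\tfrac12$, $\tfrac{b}{p}=\mathfrak{v}_2-\mathfrak{u}-\tfrac12$, $\tfrac{a+b+1}{p}=\mathfrak{v}_1+\mathfrak{v}_2-1+\tfrac1p$; then $W\asymp\abs{v_1+v_2}$ finishes it (in the degenerate situation $\mathfrak{v}_1+\mathfrak{v}_2=0$ one keeps the sharper $\abs{1+i\Im(v_1+v_2)}$, which is harmless as $B$ then vanishes at $v_1+v_2=0$).

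For the bound on $\mathcal{I}$ I would split $\R$ into the core interval $J=[-\abs{w}/2,\abs{w}/2]$ (of length $\asymp W$) and its two complementary half-lines. When $a+b<-1$ the exponential weight is superfluous and $\mathcal{I}$ is bounded directly by Lemma~\ref{lem:PairConvLemma} with $s=\abs{w}/2$, giving $W^{\Max{a,b,a+b+1}}$ at once. In general, on $J$ I would split at the midpoint: on each half one of the two factors has argument of size $\asymp w$, hence is $\asymp W^{a}$ or $\asymp W^{b}$, while the other integrates against $\int_0^{\abs{w}/2}\abs{1+it}^{\gamma}\,dt\ll\abs{1+i\abs{w}/2}^{\Max{0,\gamma+1}}$ (the estimate from the proof of Lemma~\ref{lem:PairConvLemma}), yielding $\ll W^{\Max{a,a+b+1}}+W^{\Max{b,a+b+1}}\ll W^{\Max{a,b,a+b+1}}$. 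On the half-line $\sigma>\abs{w}/2$, putting $x=\sigma-\abs{w}/2\geq 0$ and using $\abs{1+i(x+w)}^{a}\ll W^{a}\abs{1+ix}^{\Max{a,0}}$, the remaining $\int_0^{\infty}\abs{1+ix}^{\Max{a,0}+b}e^{-p\pi x}\,dx$ converges to a constant, so that piece is $\ll W^{a}$, and the other half-line is $\ll W^{b}$ symmetrically; both are dominated by $W^{\Max{a,b,a+b+1}}$.

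The main obstacle is precisely the range $a+b\geq -1$, where the beta-function integrand genuinely grows and the bare integral over $\R$ diverges: the entire mechanism is that the decay $e^{-p\pi\,\mathrm{dist}(\sigma,J)}$ extracted from the denominator $\Gamma(v_1+v_2)$ confines the mass to the core interval $J$ and tames the half-lines. A secondary nuisance is the bookkeeping of which of the three competing exponents $a$, $b$, $a+b+1$ dominates on each half of $J$, together with the threshold cases $a=-1$, $b=-1$, or $v_1+v_2$ a pole, where the one-variable estimate produces a spurious $\log W$ or a vanishing $B$; these are absorbed into the $\epsilon$'s or dispatched trivially, exactly in the spirit of the rest of this section.
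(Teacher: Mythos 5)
Your proposal is correct and follows essentially the same route as the paper: apply Stirling to the gamma factors, observe that the exponential parts cancel precisely on a core interval (your $J$) and give decay outside it, then run the argument of Lemma~\ref{lem:PairConvLemma} on the core while using the exponential weight to handle the tails, which is exactly what the paper means by ``the proof is precisely the same as Lemma~\ref{lem:PairConvLemma}, without the equivalent of requiring $a_1+a_2<-1$.'' You have merely expanded the paper's one-line reduction into an explicit core/tail decomposition with the $L^p$ bookkeeping written out, and your caveat about the threshold cases ($a=-1$, etc.\ producing a spurious $\log W$) is the same implicit boundary issue already present in Lemma~\ref{lem:PairConvLemma} and irrelevant in the paper's applications.
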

Note that this no longer requires $\Re(v_1+v_2-1) < -1$ as it would if we applied the previous lemma; this is because we are using the exponential decay of the gamma functions.
\begin{proof}
For $\Re(u)$, $\Re(v_1)$, $\Re(v_2)$ fixed, applying the second form of Stirling's formula shows that $B(u-v_1,v_2-u)$ decays exponentially in $u$ unless
\[ \Max{\Im(v_1),\Im(v_2)}>\Im(u)>\Min{\Im(v_1),\Im(v_2)}, \]
and in that case, the exponential parts cancel, so the proof is precisely the same as \lemref{lem:PairConvLemma}, without the equivalent of requiring $a_1+a_2 < -1$.
\end{proof}

\subsection{\texorpdfstring{Bounds for the $E_{w_l, j}$}{Bounds for the Ewl,j}}
The general method of bounding the $E_{w_l,j}$ integrals is to pull the $s$ and $t$ integrals inside the $r$ integrals and apply H\"older's inequality repeatedly, using \lemref{lem:PairConvLemma} and using \lemref{lem:BetaConvLemma} to handle any positive exponents that occur in the polynomial parts of the beta functions.
Through this process we arrive at
\begin{prop}
	For the contours given in \tableref{tab:FjContours} and small $\Delta \gg \epsilon$, we have absolute convergence of all of the weight functions with
	\begin{align*}
		\abs{E_{w_l,1}(\mu,y)} \ll& \abs{y_1}^{\frac{1}{2}+\epsilon} \abs{y_2}^{\frac{1}{2}+\epsilon} M_\text{sym}\paren{2,0;\mu} \\
		\abs{E_{w_l,2}(\mu,y)} \ll& \abs{y_1}^{\frac{1}{2}+\epsilon} \abs{y_2}^{\frac{1}{2}+\epsilon} M_\text{sym}\paren{2,0;\mu} \\
		\abs{E_{w_l,3}(\mu,y)} \ll& \abs{y_1}^{\frac{1}{2}+\epsilon} \abs{y_2}^{\frac{1}{2}+\epsilon} M_\text{sym}\paren{\frac{5}{2},\frac{1}{2};\mu} \\
		\abs{E_{w_l,4}(\mu,y)} \ll& \abs{y_1}^{\frac{1}{2}+\epsilon} \abs{y_2}^{\frac{1}{2}+\epsilon} M_\text{sym}\paren{\frac{3}{2},0;\mu} \\
		\abs{E_{w_l,5}(\mu,y)} \ll& \abs{y_1}^{\frac{1}{2}+\epsilon} \abs{y_2}^{\frac{1}{2}+\epsilon} M_\text{sym}\paren{\frac{3}{2},0;\mu} \\
		\abs{E_{w_l,6}(\mu,y)} \ll& \abs{y_1}^{\frac{1}{2}+\epsilon} \abs{y_2}^{\frac{1}{2}+\epsilon} M_\text{sym}\paren{2,\frac{1}{2};\mu} \\
		\abs{E_{w_l,7}(\mu,y)} \ll& \abs{y_1}^{\frac{1}{2}+\epsilon} \abs{y_2}^{\frac{1}{2}+\epsilon} M_\text{sym}\paren{\frac{3}{2},0;\mu}.
	\end{align*}
\label{prop:Bounds}
\end{prop}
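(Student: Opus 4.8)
The plan is to substitute the Mellin--Barnes representation \eqref{eq:TwlFinal} of $T_{w_l}$, together with the definitions of $N_{w_l,1}$, $R_{w_l}$, $N_{w_l,2}$ and $A_{w_l}$, into each of \eqref{eq:Ewl1}--\eqref{eq:Ewl7}. This expands each $E_{w_l,j}$ as a tower of contour integrals in the variables $s$, $r=(r_1,r_2)$, $t_1$, $t_2$ --- preceded, for $j=1,2,3$, by an outer integral in $u$ against the appropriate member of the $G^*$ family, and, for $j=4,\dots,7$, simply by the prefactor $G^*_b(\mu)$ with $u$ already specialized to the double residue $(\mu_1,-\mu_2)$. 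In every one of these integrals the sole dependence on $y$ is through factors $\abs{4\pi^2 y_1}^{-s_1}$, $\abs{4\pi^2 y_2}^{-s_2}$ (or shifted versions at the $s$-residues), so with the contours of \tableref{tab:FjContours} and $\mathfrak{s}=-\tfrac12-\epsilon$ one reads off the $\abs{y_1}^{\frac12+\epsilon}\abs{y_2}^{\frac12+\epsilon}$ prefactor immediately; the remaining task is to bound the modulus of the $s,r,t$-integral uniformly in $y$ in terms of the $\mu$-differences. Throughout, the ratio $k_\text{adj}(\mu)/\abs{c_3(\mu)}^2$ is within a constant of $\prod_{j<k}\abs{\mu_j-\mu_k}^{1-\Delta}$, and $G^*_b(\mu)$ is controlled by \lemref{lem:GStarIntegral}(1); these two factors, combined with the bookkeeping below, produce the explicit exponents in the statement.

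Next I would apply Stirling's formula to every gamma and beta factor, turning each $\Gamma(z)$ into $\abs{z}^{\Re(z)-1/2}e^{-\frac{\pi}{2}\abs{\Im(z)}}$ and recording the net exponential. As remarked after \eqref{eq:TwlFinal}, the exponential growth of $A_{w_l}$ --- a sum of products of two sines whose arguments are linear in $v_1 t_1$, $v_2(s_2-u_2)$ and the other $r,s,t$ variables --- fails (just) to cancel the exponential decay of the $\Gamma$'s along the chosen contours. Hence after this cancellation the integrand is a product of polynomial factors of the form appearing in \eqref{eq:GeneralProductInt} times a strictly decaying exponential in at least one still-free direction, which is what makes each successive integration converge. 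The concrete scheme is: integrate $t_2$ innermost, using \lemref{lem:PairConvLemma} on the products of linear forms coming from the $\Gamma$'s (whose exponents sum below $-1$ after Stirling) and \lemref{lem:BetaConvLemma} on the beta-function factors (which is where any positive exponents sit, and where one must exploit the exponential decay rather than absolute convergence of the exponents); then integrate $t_1$ the same way; then $r$, invoking \lemref{lem:GStarIntegral}(4) --- or parts (2), (3) for the residual versions $E_{w_l,1}$, $E_{w_l,2}$ --- to absorb the shifted factor $G^*((1+2\Delta,\Delta)+r,-\mu)$; then $s$, and finally the outer $u$. At each stage H\"older's inequality splits the integrand into pieces each carrying a single linear form, so that \lemref{lem:PairConvLemma} and \lemref{lem:BetaConvLemma} apply; the resulting bounds have the asymmetric shape \eqref{eq:Mpolydef} in $\Im(\mu_j-\mu_k)$, and one symmetrizes over $W$ via \eqref{eq:Msymdef} since those one-variable lemmas are proved under an ordering assumption on the three differences.

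Carrying the exponents through the tower is the bulk of the computation, but it is mechanical: \lemref{lem:PairConvLemma} replaces two linear-form factors with exponents $a_1,a_2$ by a single one with exponent $\Max{a_1,a_2,a_1+a_2+1}$; \lemref{lem:BetaConvLemma} replaces a beta factor by a linear-form factor with exponent $\Max{\mathfrak{u}-\mathfrak{v}_2,-\mathfrak{u}-\mathfrak{v}_1,\tfrac1p-\tfrac12}$; and the $s$- and $t_1$-residues substitute $\mu$-differences for the integration variables. Adding these contributions along the contours of \tableref{tab:FjContours}, together with the $\prod_{j<k}\abs{\mu_j-\mu_k}^{1-\Delta}$ from $k_\text{adj}/\abs{c_3}^2$ and --- for $j=4,\dots,7$ --- the $G^*_b$ bound, gives the stated total exponents $\kappa\in\set{2,\tfrac52}$ and minimal exponents $\delta\in\set{0,\tfrac12}$ once $\Delta\to 0^+$; the hypothesis $\Delta\gg\epsilon$ small is used only to ensure the $\Delta$-dependent shifts in the $G^*$-arguments and in $k_\text{adj}$ move no contour across a pole and cost nothing in the leading exponents.

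The main obstacle is not any single estimate but the combinatorial control of the exponential factors. For each of the four sign combinations implicit in $A_{w_l}$ and each pair of $v$-signs one must choose an order of integration and a H\"older splitting for which a genuine exponential decay survives in every not-yet-integrated variable; an unlucky choice would produce a divergence. Thus the crux is the verification that the contours listed in \tableref{tab:FjContours} do the job --- in particular that the entries $\mathfrak{r}$, $\mathfrak{t}$ keep all beta-function arguments off their poles and keep the net exponential strictly decaying --- after which the polynomial bookkeeping that yields the $M_\text{sym}$ exponents, while lengthy, is routine.
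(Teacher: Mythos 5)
Your high-level plan is the same as the paper's: substitute the Mellin--Barnes tower \eqref{eq:TwlFinal}, replace every gamma by Stirling, verify that the exponential decay of the $\Gamma$'s dominates the growth of $A_{w_l}$, then grind through the polynomial exponents via H\"older together with \lemref{lem:PairConvLemma} and \lemref{lem:BetaConvLemma}, symmetrizing over $W$ at the end. But you propose a different, and less convenient, order of integration. The paper pulls the $s$- and $t$-integrals \emph{inside} the $r$-integral and runs H\"older on $s_1,s_2,t_2,t_1$ first, deliberately throwing away all extra convergence so that the resulting bound is $\ll\abs{y_1}^{-\mathfrak{s}_1}\abs{y_2}^{-\mathfrak{s}_2}$ \emph{uniformly in $u$ and $r$}. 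This is the structural point that makes the rest of the proof clean: the $u$- and $r$-integrals then \emph{decouple}, each carrying only its $G^*$-family factor (tracked in \tableref{tab:uandrBoundParams}), and \lemref{lem:GStarIntegral} applies verbatim. Your order ($t_2,t_1,r,s,u$) loses this decoupling: after the two $t$-integrations, the $r$-integrand is not $\abs{G^*((1+2\Delta,\Delta)+r,-\mu)}$ alone but that times polynomial factors in $r$ coming from $\Gamma(1+s_1-r_2)$, from the surviving beta arguments, and from whatever the $t$-H\"olders left behind --- all still tied to $s$ and $u$. So you cannot simply ``invoke \lemref{lem:GStarIntegral}(4) to absorb the shifted factor'' at the $r$-step; you would need another round of H\"older there, carrying $r$-dependence forward into the $s$- and $u$-integrations and greatly complicating the bookkeeping (and, possibly, degrading the exponents).

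Two smaller points. First, parts (2) and (3) of \lemref{lem:GStarIntegral} are used for the \emph{$u$}-integral in $E_{w_l,1}$ and $E_{w_l,2}$ (they carry $G^*_r$ and $G^*_l$, respectively), not for the $r$-integral, which always has the full $G^*((1+2\Delta,\Delta)+r,-\mu)$; your parenthetical places this remark in the wrong spot. Second, the statement that ``each beta-function factor is where the positive exponents sit'' and is handled by \lemref{lem:BetaConvLemma} is correct in spirit, but the paper's example for $E_{w_l,3}$ shows that the positive-exponent beta factors are paired with \emph{specific} other linear forms during the H\"older splits (e.g.\ the $s_1$-H\"older pairs the two occurrences of $s_1$ in the $\Gamma$'s against the two arguments of the second beta); the choice of pairings is not free, and one unlucky splitting can produce a positive net exponent that kills convergence. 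You correctly flag this as the crux, but the paper's order (bounding $s,t$ first, then $u,r$) is precisely what reduces the number of pairing choices one has to make.
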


The complete proof is quite tedious, but, as an example, the bound on $E_{w_l,3}$ requires convergence of
\begin{align*}
	& \abs{M_{w_l,1}} = \\
	& \abs{y_1}^{-\mathfrak{s}_1} \abs{y_2}^{-\mathfrak{s}_2} \int_{\Re(s) = \mathfrak{s}} \int_{\Re(t)=\mathfrak{t}} \abs{s_1-u_1-t_1}^{-\frac{1}{2}+2\epsilon} \abs{s_2-u_2}^{-\frac{1}{2}+3\epsilon} \\
	& \qquad \abs{1+s_1-r_2}^{-101\epsilon} \abs{1+s_2-r_1-t_1-t_2}^{-\frac{1}{2}+\epsilon} \abs{t_1}^{-\frac{1}{2}+\epsilon} \abs{t_1+t_2}^{-\frac{1}{2}+2\epsilon} \\
	& \qquad \BetaFun{-\epsilon}{-\frac{1}{4}+2\epsilon}{-\frac{1}{4}+\epsilon}{u_2-2s_2+r_1+t_2}{1+u_1-u_2-r_1-t_2} \\
	& \qquad \BetaFun{-\frac{1}{4}+49\epsilon}{-49\epsilon}{-\frac{1}{4}+\epsilon}{u_1-2s_1+r_2-t_2}{-u_1+r_1-r_2+t_2} \\
	& \qquad \BetaFun{-\epsilon}{4\epsilon}{-\frac{1}{2}-2\epsilon}{1-2t_1-t_2}{1-u_1-r_1} \, \abs{dt} \, \abs{ds}
\end{align*}
for $\Re(u) = -\frac{1}{2}-4\epsilon+\delta$, $\Re(r) = \paren{\frac{1}{2}-4\epsilon,100\epsilon}$, $\mathfrak{s} = -\frac{1}{2}-\epsilon$, $\mathfrak{t} = \epsilon$.
For brevity, we have introduced the shorthand
\[ \BetaFun{a}{b}{c}{u}{v} := \abs{u}^a \abs{v}^b \abs{u+v}^c \exp-\frac{\pi}{4}\paren{\abs{\Im(u)}+\abs{\Im(v)}-\abs{\Im(u+v)}}, \]
and will supress terms $\BigO{\epsilon^2}$.

First, applying H\"older in $s_1$ with exponents $2-204\epsilon = \frac{1}{\frac{1}{2}+51\epsilon} +\BigO{\epsilon^2}$ and $2+204\epsilon = \frac{1}{\frac{1}{2}-51\epsilon} +\BigO{\epsilon^2}$ and \lemref{lem:PairConvLemma} pairing the first two occurrences of $s_1$ and the two terms in the second beta function, giving
\begin{align*}
	& \abs{M_{w_l,1}} \ll \\
	& \abs{y_1}^{-\mathfrak{s}_1} \abs{y_2}^{-\mathfrak{s}_2} \int_{\Re(s_2) = \mathfrak{s}_2} \int_{\Re(t)=\mathfrak{t}} \abs{1+u_1-r_2+t_1}^{-48\epsilon} \abs{s_2-u_2}^{-\frac{1}{2}+3\epsilon} \\
	& \qquad \abs{1+s_2-r_1-t_1-t_2}^{-\frac{1}{2}+\epsilon} \abs{t_1}^{-\frac{1}{2}+\epsilon} \abs{t_1+t_2}^{-\frac{1}{2}+2\epsilon} \abs{u_1-r_1+r_2-t_2}^{-51\epsilon} \\
	& \qquad \BetaFun{-\epsilon}{-\frac{1}{4}+2\epsilon}{-\frac{1}{4}+\epsilon}{u_2-2s_2+r_1+t_2}{1+u_1-u_2-r_1-t_2} \\
	& \qquad \BetaFun{-\epsilon}{4\epsilon}{-\frac{1}{2}-2\epsilon}{1-2t_1-t_2}{1-u_1-r_1} \, \abs{dt} \, \abs{ds_2}.
\end{align*}

Now repeat in $s_2$ with exponents $\frac{4}{3}-4\epsilon$ and $4+36\epsilon$, and with similar parings, giving
\begin{align*}
	& \abs{M_{w_l,1}} \ll \\
	& \abs{y_1}^{-\mathfrak{s}_1} \abs{y_2}^{-\mathfrak{s}_2} \int_{\Re(t)=\mathfrak{t}} \abs{1+u_1-r_2+t_1}^{-48\epsilon} \abs{1+u_1-u_2-r_1-t_2}^{-\frac{1}{4}+\epsilon} \\
	& \qquad \abs{1+u_2-r_1-t_1-t_2}^{-\frac{1}{4}+10\epsilon} \\
	& \qquad \abs{t_1}^{-\frac{1}{2}+\epsilon} \abs{t_1+t_2}^{-\frac{1}{2}+2\epsilon} \abs{u_1-r_1+r_2-t_2}^{-51\epsilon}  \\
	& \qquad \BetaFun{-\epsilon}{4\epsilon}{-\frac{1}{2}-2\epsilon}{1-2t_1-t_2}{1-u_1-r_1} \, \abs{dt}.
\end{align*}

Again, H\"older in $t_2$ with exponents $2+48\epsilon$, $\frac{1}{52\epsilon}$, and $2+160\epsilon$, pairing the first two occurrences and pairing each of the second two with the denominator of the beta function, giving
\begin{align*}
	& \abs{M_{w_l,1}} \ll \\
	& \abs{y_1}^{-\mathfrak{s}_1} \abs{y_2}^{-\mathfrak{s}_2} \abs{1-u_1-r_1}^{4\epsilon} \int_{\Re(t_1)=\mathfrak{t}_1} \abs{1+u_1-r_2+t_1}^{-48\epsilon} \abs{u_1-2u_2+t_1}^{-\epsilon} \\
	& \qquad \abs{2-2u_1-r_2-2t_1}^{-51\epsilon} \abs{t_1}^{-\frac{1}{2}+\epsilon} \abs{2-u_1-r_1-t_1}^{-\frac{1}{2}+49\epsilon} \, \abs{dt_1}.
\end{align*}

Lastly, we apply H\"older in $t_1$ with exponents $\frac{1}{46\epsilon}$, $\frac{1}{50\epsilon}$, and $1+96\epsilon$, pairing only the last two occurrences, giving
\begin{align*}
	& \abs{M_{w_l,1}} \ll \abs{y_1}^{-\mathfrak{s}_1} \abs{y_2}^{-\mathfrak{s}_2} \abs{1-u_1-r_1}^{4\epsilon} \abs{2-u_1-r_1}^{-46\epsilon} \ll \abs{y_1}^{-\mathfrak{s}_1} \abs{y_2}^{-\mathfrak{s}_2}.
\end{align*}

The remaining $E_{w_l,j}$ are similar; in each case, the $s$ and $t$ integrals converge absolutely and are bounded by $\abs{y_1}^{-\mathfrak{s}_1} \abs{y_2}^{-\mathfrak{s}_2}$ -- we ignore any extra convergence, so the $u$ and $r$ integrals separate, leaving us with integrals of the form
\begin{align*}
	& \frac{k_\text{adj}(\mu)}{\abs{c_3(\mu)}^2} \int_{\Re(u)=\mathfrak{u}} \abs{G^*(u,\mu)} \abs{du} \int_{\Re(r)=\mathfrak{r}} \abs{G^*((1+2\Delta,\Delta)+r,-\mu)} \abs{dr}, \\
	& \frac{k_\text{adj}(\mu)}{\abs{c_3(\mu)}^2} \int_{\Re(u_2)=\mathfrak{u}_2} \abs{G^*_l(u_2,\mu)} \abs{du_2} \int_{\Re(r)=\mathfrak{r}} \abs{G^*((1+2\Delta,\Delta)+r,-\mu)} \abs{dr}, \\
	& \frac{k_\text{adj}(\mu)}{\abs{c_3(\mu)}^2} \int_{\Re(u_1)=\mathfrak{u}_1} \abs{G^*_r(u_1,\mu)} \abs{du_1} \int_{\Re(r)=\mathfrak{r}} \abs{G^*((1+2\Delta,\Delta)+r,-\mu)} \abs{dr}, \\
	& \frac{k_\text{adj}(\mu)}{\abs{c_3(\mu)}^2} G^*_b(\mu) \int_{\Re(r)=\mathfrak{r}} \abs{G^*((1+2\Delta,\Delta)+r,-\mu)} \abs{dr},
\end{align*}

Since $\mathfrak{u}_i\pm\Re(\mu_j), \mathfrak{r}_i\pm\Re(\mu_j) > -1-\epsilon$, we may apply \lemref{lem:GStarIntegral}, and each of the above products is at most $M_\text{sym}\paren{\kappa,\delta; \mu}$ where now
\begin{align*}
	\delta =& 1-\Delta+\delta_u+\min \Biggl\{\frac{2(1+2\Delta+\mathfrak{r}_1)-\Re(\mu_2)}{2},\frac{1+2\Delta+\mathfrak{r}_1+\Re(\mu_1)}{2}, \\
	& \qquad \frac{2(\Delta+\mathfrak{r}_2)+\Re(\mu_1)}{2}, \frac{\Delta+\mathfrak{r}_2-\Re(\mu_2)}{2}\Biggr\},
\end{align*}
and the $\kappa$ and $\delta_u$ parameters are given in \tableref{tab:uandrBoundParams}.
The bounds may be expressed in this manner because the worst bound happens exactly when the least exponent $\delta$ is on the least difference $\abs{\mu_i-\mu_j}$.
We have sacrificed some efficiency for symmetry in the bounds for the $u$ residues, but it is unlikely that one could exploit what we lost in any case.
This is sufficient to complete the proof of \propref{prop:Bounds}.

\begin{table}
\begin{tabular}{r|cc}
	Form & $\kappa$ & $\delta_u$ \\
	\hline
	$G^* G^*$ & $\mathfrak{u}_1+\mathfrak{u}_2+\mathfrak{r}_1+\mathfrak{r}_2+3$ & $\min \set{\frac{2\mathfrak{u}_1+\Re(\mu_1)}{2},\frac{\mathfrak{u}_1-\Re(\mu_2)}{2},\frac{2\mathfrak{u}_2-\Re(\mu_2)}{2},\frac{\mathfrak{u}_2+\Re(\mu_1)}{2}}$ \\

	$G^*_l G^*$ & $\mu_1+\mathfrak{u}_2+\mathfrak{r}_1+\mathfrak{r}_2+\frac{5}{2}$ & $\Min{\frac{\Re(\mu_1-\mu_2)-1}{2}, \frac{\Re(\mu_1-\mu_3)-1}{2}, \mathfrak{u}_2+\frac{\Re(\mu_2+\mu_3)}{2}}$ \\

	$G^*_r G^*$ & $\mathfrak{u}_1-\mu_2+\mathfrak{r}_1+\mathfrak{r}_2+\frac{5}{2}$ & $\Min{\frac{\Re(\mu_1-\mu_2)-1}{2}, \mathfrak{u}_1-\frac{\Re(\mu_1+\mu_3)}{2}, \frac{\Re(\mu_3-\mu_2)-1}{2}}$ \\

	$G^*_b G^*$ & $\mu_1-\mu_2+\mathfrak{r}_1+\mathfrak{r}_2+2$ & $\Min{\frac{\Re(\mu_1-\mu_2)-1}{2}, \frac{\Re(\mu_1-\mu_3)-1}{2}, \frac{\Re(\mu_3-\mu_2)-1}{2}}$
\end{tabular}
\caption{Parameters for bounding the $u$ and $r$ integrals.}
\label{tab:uandrBoundParams}
\end{table}

\subsection{The decay hypotheses of \thmref{thm:KuznetsovSimplification} and \thmref{thm:PartialInversion}}
We will frequently use the notation \eqref{eq:Mpolydef}, \eqref{eq:Msymdef}, and \eqref{eq:Lambdapolydef} for polynomial bounds in $\mu$ throughout the remainder of the paper.
The content of \thmref{thm:MeanValueEstimates} is essentially that the spectral side converges absolutely when the trivial element term does; in other words, convergence of the integral
\begin{align*}
	\int_{\Re(\mu)=\eta} \abs{\hat{k}(\mu)} \abs{\mu_1-\mu_2} \abs{\mu_1-\mu_3} \abs{\mu_2-\mu_3} \abs{d\mu}
\end{align*}
for $\abs{\eta_i} \le \frac{5}{14}$ gives absolute convergence of the spectral side as well as the trivial term.
By \propref{prop:JwHoloAndBd}, increasing the exponents on the $\abs{\mu_i-\mu_j}$ terms to $M_\text{sym}\paren{\frac{7}{2},\frac{1-4\Delta}{8};\mu}$ gives a sufficient condition for convergence of the intermediate and long element terms in \thmref{thm:KuznetsovSimplification}, hence justifies our use of \eqref{eq:KuzSimplkBd} as the convergence hypothesis on $\hat{k}$ and completes that theorem.

To complete the proof of \thmref{thm:PartialInversion}, we again need to demonstrate that the hypothesis $\hat{f}(q) \ll \abs{q_1 q_2 (q_2-q_1)}^{-4}$ is sufficient for absolute convergence of all of the relevant terms.
For the spectral side and the trivial term, we start with \eqref{eq:KuzSimplkBetterBd} at small $\Delta$ and apply Stirling's formula to $K_{w_l}$ to obtain
\begin{align*}
	& \abs{K_{w_l}(\mu)} \asymp \\
	& M_\text{poly}\paren{\frac{1}{2}+\Re(\mu_1-\mu_2), \frac{1}{2}+\Re(\mu_1-\mu_3), \frac{1}{2}+\Re(\mu_3-\mu_2);\mu},
\end{align*}
for $\Re(\mu)$ constant and away from the lines
\begin{align*}
	& \Re(\mu_1)-\Re(\mu_2),\Re(\mu_1)-\Re(\mu_3),\Re(\mu_3)-\Re(\mu_2) \in \\
	& \set{n, \pm(1+\Delta+n):n\ge 0},
\end{align*}
so we desire convergence of the integral
\begin{align*}
	L_1 :=& \int_{\Re(q) = -\frac{5}{14}-\epsilon} \abs{q_1}^{-4} \abs{q_2}^{-4} \abs{q_2-q_1}^{-4} \\
	& \qquad \abs{q_1+q_2}^{\frac{31}{14}+2\epsilon} \abs{2q_1-q_2}^{\frac{6}{7}+\epsilon} \abs{2q_2-q_1}^{\frac{6}{7}+\epsilon} \\
	& \qquad \int_{\Re(\mu)=\eta} \frac{\abs{\mu_1-\mu_2}^{2+100\epsilon} \abs{\mu_1-\mu_3}^{2+100\epsilon} \abs{\mu_2-\mu_3}^{1+100\epsilon}}{\prod_{j=1}^3 \abs{q_1-\mu_j}\abs{q_2+\mu_j}} \\
	& \qquad \abs{k_\text{conv}(\mu,q)} \, \abs{d\mu} \, \abs{dq},
\end{align*}
by symmetry in $\mu$.
From \propref{prop:Bounds}, we note that this also implies absolute convergence of the $F_1$ error term.
To simplify the above integral, we choose
\begin{align}
\label{eq:kconv}
	k_\text{conv}(\mu,q) &= \paren{\frac{\paren{5+q_1}\paren{5-q_2}\paren{5-q_1+q_2}}{\paren{5+\mu_1}\paren{5+\mu_2}\paren{5+\mu_3}}}^{\frac{145}{72}+100\epsilon}.
\end{align}
We call this function $k_\text{conv}$ because without it the above integral would not converge -- the total power on $\mu_1$ and $\mu_2$ is not below $-1$.

Applying the same logic to the remaining error terms, it is sufficient to consider convergence of the integrals
\begin{align*}
	L_2 :=& \int_{\Re(\mu)=\paren{-\frac{1}{2}-4\epsilon,\frac{1}{2}+\epsilon}} \int_{\Re(q_2) = -6\epsilon} \abs{\mu_1}^{-4} \abs{q_2}^{-4} \abs{q_2-\mu_1}^{-4} \\
	& \qquad M_\text{sym}\paren{\frac{5}{2},0;\mu} \frac{\abs{q_2+\mu_1}^{1+4\epsilon} \abs{2\mu_1-q_2}^{\frac{3}{2}+5\epsilon} \abs{2q_2-\mu_1}^{-\epsilon}}{\abs{q_2+\mu_2}\abs{q_2+\mu_3}} \\
	& \qquad \abs{k_\text{conv}(\mu,q)} \, \abs{dq_2} \, \abs{d\mu},
\end{align*}
\begin{align*}
	L_3 :=& \int_{\Re(\mu) = (-\epsilon, \epsilon)} \abs{\mu_1}^{-4} \abs{\mu_2}^{-4} \abs{\mu_3}^{-4} \\
	& \qquad \abs{\mu_1-\mu_2}^{\frac{3}{2}+100\epsilon} \abs{\mu_1-\mu_3}^{\frac{5}{4}+100\epsilon} \abs{\mu_3-\mu_2}^{\frac{5}{4}+100\epsilon} \abs{d\mu}.
\end{align*}
Here convergence of the $L_2$ integral is sufficient to show absolute convergence of $F_2$ and $F_3$ (by symmetry), and $L_3$ gives the absolute convergence of the remaining error terms.
The term $M_\text{sym}\paren{\frac{5}{2},0;\mu}$ comes from \eqref{eq:BetterJwlBound} which is proved by the above reasoning for the $E_{w_l,j}$ functions with the contours $\Re(u) = -\frac{1}{2}-4\epsilon$, $\Re(r) = \paren{\frac{1}{2}-4\epsilon,100\epsilon}$, $\mathfrak{s} = -\frac{1}{2}-\epsilon$, $\mathfrak{t} = \epsilon$, and taking the worst bound from each of the $G^* G^*$, $G^*_l G^*$, $G^*_r G^*$, and $G^*_b G^*$ configurations.
The exponents in $L_3$ are derived from
\[ \abs{K_{w_l}(\mu)}^{-1} \ll \abs{\mu_1-\mu_2}^{\frac{1}{2}} \abs{\mu_1-\mu_3}^{\frac{1}{4}} \abs{\mu_3-\mu_2}^{\frac{1}{4}}, \]
\[ \abs{E_{w_l,j}} \ll \abs{y_1 y_2}^{\frac{1}{2}+\epsilon} \abs{\mu_1-\mu_2}^{1+100\epsilon} \abs{\mu_1-\mu_3}^{1+100\epsilon} \abs{\mu_3-\mu_2}^{1+100\epsilon}. \]

The convergence of the integrals $L_1$, $L_2$ and $L_3$ completes \thmref{thm:PartialInversion}.
Said convergence is easiest to obtain by noting that
\[ \abs{\mu_1-\mu_2}^a \abs{\mu_1-\mu_3}^b \abs{\mu_2-\mu_3}^c \ll \abs{\mu_1}^d \abs{\mu_2}^e \abs{\mu_3}^f, \]
for any $d+e+f=2(a+b+c)$, $a,b,c,d,e,f\ge0$, $d,e,f \le a+b+c$, which follows from $\abs{\mu_1-\mu_2} \ll \abs{\mu_1} \abs{\mu_2}$, etc. and $\mu_1-\mu_2 = 2\mu_1+\mu_3=-\mu_3-2\mu_2$, etc.
A similar bound applies in the $q$ variables.
Using such a bound is clearly wasteful, but this simplifies the proof immensely.

For example, in the $L_1$ integral, after expanding $k_\text{conv}$, we remove the positive-exponent $q$ terms and part of $M_\text{sym}\paren{5,1}$ with the respective bounds
\[ \paren{\prod_{j=1}^3 \abs{q_1-\mu_j} \abs{\mu_j-q_2}}^{\frac{1}{3}-90\epsilon} \abs{q_1}^{\frac{41}{21}+200\epsilon} \abs{q_2}^{\frac{41}{21}+200\epsilon} \abs{q_2-q_1}^{\frac{41}{21}+200\epsilon} \]
and
\[ \abs{\mu_1-\mu_2}^{\frac{5}{6}+10\epsilon} \abs{\mu_1-\mu_3}^{\frac{5}{6}+10\epsilon}\abs{\mu_1-\mu_2}^{\frac{1}{3}+10\epsilon} \ll \prod_j \abs{\mu_j}^{\frac{4}{3}+20\epsilon} \]
giving
\begin{align*}
	L_1 \ll& \int_{\Re(\mu)=\eta} \abs{\mu_1}^{-\frac{49}{72}-80\epsilon} \abs{\mu_2}^{-\frac{49}{72}-80\epsilon} \abs{\mu_3}^{-\frac{49}{72}-80\epsilon} \\
	& \qquad \int_{\Re(q) = -\frac{5}{14}-\epsilon} \frac{\abs{\mu_1-\mu_2}^{\frac{2}{3}+90\epsilon} \abs{\mu_1-\mu_3}^{\frac{2}{3}+90\epsilon} \abs{\mu_2-\mu_3}^{\frac{2}{3}+90\epsilon}}{\paren{\prod_{j=1}^3 \abs{q_1-\mu_j}\abs{q_2+\mu_j}}^{\frac{2}{3}+90\epsilon}} \, \abs{d\mu} \, \abs{dq},
\end{align*}
Applying H\"older to the $q$ integrals gives 3 integrals of the form
\begin{align*}
	& \paren{\int_{\Re(q) = \mathfrak{q}} \paren{\abs{q_1-\mu_j}\abs{q_1-\mu_k}}^{-1-135\epsilon} \paren{\abs{q_2-\mu_j}\abs{q_2-\mu_k}}^{-1-135\epsilon} \, \abs{dq}}^{\frac{1}{3}} \\
	& \ll \abs{\mu_k-\mu_j}^{-\frac{2}{3}-90\epsilon},
\end{align*}
so
\begin{align*}
	L_1 \ll& \int_{\Re(\mu)=\eta} \abs{\mu_1}^{-\frac{49}{72}-80\epsilon} \abs{\mu_2}^{-\frac{49}{72}-80\epsilon} \abs{\mu_3}^{-\frac{49}{72}-80\epsilon} \abs{d\mu} \ll 1.
\end{align*}

\subsection{\texorpdfstring{The $G$ Function}{The G Function}}
\label{sec:GFuncBd}
The bound for the residue $G^*_b(\mu)$ in \lemref{lem:GStarIntegral} part \textit{a} is simply from applying Stirling's formula.
The residue $G^*_l(u_2,\mu)$ has exponential decay in $\Im(u_2)$ unless $\Im(\mu_2) > \Im(-u_2) > \Im(\mu_3)$ (up to permutation of $(\mu_2, \mu_3)$, for fixed $\Re(u_2)$), thus it integrates much like a beta function, and the bound in \lemref{lem:GStarIntegral} part \textit{b} follows by the same logic, similarly for part \textit{c}.

For $\Re(u)$, $\Re(\mu)$ fixed, applying Stirling's formula gives exponential decay in $u_1$ or $u_2$ for $G^*$ unless, up to permutation of $\mu$ or $(u_1,-u_2)$,
\[ \Im(\mu_1)>\Im(u_1)>\Im(\mu_2)>\Im(-u_2)>\Im(\mu_3), \]
in which case, we have
\begin{align*}
	G^*(u,\mu) &\ll \frac{\abs{u_1+u_2}^{\frac{1-\Re(u_1+u_2)}{2}} \prod_{i=1}^3 \abs{u_1-\mu_i}^{\frac{\Re(u_1-\mu_i)-1}{2}} \abs{u_2+\mu_i}^{\frac{\Re(u_2+\mu_i)-1}{2}}}{\Lambda_\text{poly}(\mu)}.
\end{align*}

For integrals of the $G^*$ function, it is sufficient to prove the following lemma:
\begin{lem}
	Suppose $a_i, b_i > -1-\epsilon$, $a_2+b_2+c > -2-\epsilon$, and $v_1<v_2<v_3$ with $v_3-v_2>v_2-v_1$, then
	\begin{align*}
		\int_{v_1}^{v_2} \int_{v_2}^{v_3} \abs{1+i(u_1-u_2)}^c \prod_{i=1}^3 \abs{1+i(u_1-v_i)}^{a_i} \abs{1+i(u_2-v_i)}^{b_i} du_2 \, du_1 \\
		\ll \abs{1+i(v_3-v_1)}^{\frac{\kappa-\delta}{2}+\epsilon} \abs{1+i(v_3-v_2)}^{\frac{\kappa-\delta}{2}+\epsilon} \abs{1+i(v_2-v_1)}^{\delta+\epsilon},
	\end{align*}
	where
	\begin{align*}
		\kappa =& a_1+a_2+a_3+b_1+b_2+b_3+c+2, \\
		\delta =& \Min{a_1+a_2+1, a_1+a_2+b_1+c+1}.
	\end{align*}
\label{lem:ElementaryGIntegral}
\end{lem}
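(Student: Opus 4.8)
The plan is to reduce everything to the one-variable estimates already used in the proof of \lemref{lem:PairConvLemma}, namely $\int_0^U \abs{1+it}^\alpha\,dt \ll_\alpha \abs{1+iU}^{\Max{0,\alpha+1}+\epsilon}$ for $U\ge 0$ and $\int_p^U \abs{1+it}^\alpha\,dt \ll \abs{1+ip}^{\alpha+1}$ for $\alpha<-1$, together with the triviality $\abs{1+i(s+t)}\asymp\abs{1+i\Max{s,t}}$ for $s,t\ge 0$. Write $\ell_1 = v_2-v_1$, $\ell_2 = v_3-v_2$, $L = v_3-v_1 = \ell_1+\ell_2$; the hypothesis $\ell_1<\ell_2$ gives $\ell_2\le L<2\ell_2$, so $\abs{1+iL}\asymp\abs{1+i\ell_2}$ and it suffices to prove the integral is $\ll \abs{1+iL}^{\kappa-\delta+\epsilon}\abs{1+i\ell_1}^{\delta+\epsilon}$. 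The case $L<1$ is trivial (integrand $\asymp 1$ on a set of measure $\ll 1$), and the case $\ell_1<1$ is a strictly easier, degenerate form of the argument below with the small-scale integrals truncated at length $<1$, so I would assume $L,\ell_1\ge 1$.

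First I would note that for $u_1\in[v_1,v_2]$ one has $\ell_2\le\abs{u_1-v_3}\le L$, so $\abs{1+i(u_1-v_3)}^{a_3}\asymp\abs{1+iL}^{a_3}$, which can be pulled out of the integral immediately. After the substitution $u_1 = v_2-x$, $u_2 = v_2+y$ (so $x\in[0,\ell_1]$, $y\in[0,\ell_2]$) the remaining integral becomes
\[ \int_0^{\ell_1}\!\!\int_0^{\ell_2} \abs{1+i(x+y)}^{c}\abs{1+i(\ell_1-x)}^{a_1}\abs{1+ix}^{a_2}\abs{1+i(\ell_1+y)}^{b_1}\abs{1+iy}^{b_2}\abs{1+i(\ell_2-y)}^{b_3}\,dy\,dx, \]
and the goal is to bound this by $\abs{1+i\ell_1}^{\delta+\epsilon}\abs{1+iL}^{\kappa-a_3-\delta+\epsilon}$. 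I would then decompose the rectangle $[0,\ell_1]\times[0,\ell_2]$ along the lines $x=\ell_1/2$, $y=\ell_2/2$, and, inside the subsquare $\set{x\le\ell_1/2,\ y\le\ell_2/2}$, also along $y=x$ and $y=\ell_1$. On each of the $O(1)$ resulting pieces, every factor $\abs{1+i(\ell_1-x)}$, $\abs{1+i(\ell_1+y)}$, $\abs{1+i(\ell_2-y)}$, $\abs{1+i(x+y)}$ is comparable to a constant in the integration variables or to $\abs{1+ix}$, $\abs{1+iy}$, $\abs{1+i(\ell_1-x)}$, or $\abs{1+i(\ell_2-y)}$, with at most one moving factor in $x$ and one in $y$; pulling the constants out, each piece is a product of at most two one-variable integrals of the type above and evaluates to a bound $\abs{1+i\ell_1}^{p}\abs{1+iL}^{q}$ for exponents $p,q$ read off region by region.

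The main work, and the main obstacle, is the exponent bookkeeping: checking two claims about $(p,q)$ on every piece. The first is that $p+q\le\kappa-a_3+O(\epsilon)$; the total exponent picked up is exactly $\sum_i a_i-a_3+\sum_i b_i+c+2$ whenever each one-dimensional integration has effective exponent $\ge-1$, and the hypotheses $a_i,b_i>-1-\epsilon$ and $a_2+b_2+c>-2-\epsilon$ force any integration with a smaller (hence convergent) effective exponent to lose at most $O(\epsilon)$, which is absorbed in the $\epsilon$'s. The second claim, which is where the shape of $\delta$ comes from, is that $p\ge\delta$ on every piece. Since $x\le\ell_1$ throughout, the $x$-integration always contributes at least $a_1+a_2+1$ (up to $\epsilon$) to the $\ell_1$-exponent; on pieces where $y$ is resolved at a scale $\gg\ell_1$ the factors $\abs{1+i(\ell_1+y)}$ and $\abs{1+i(x+y)}$ are both $\asymp\abs{1+iy}\asymp\abs{1+iL}$ and contribute nothing further to $\ell_1$, so $p\ge a_1+a_2+1$; on pieces where $y\ll\ell_1$ instead $\abs{1+i(\ell_1+y)}\asymp\abs{1+i\ell_1}$ and $\abs{1+i(x+y)}$ sits at scale $\le\ell_1$, contributing the further amounts $b_1$ and (at least) $c$ respectively, so $p\ge a_1+a_2+b_1+c+1$. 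The corner pieces, where all of $x$, $y$, $x+y$ move, are the delicate case: there the relevant effective exponent is essentially $a_2+b_2+c+2$, which the hypothesis keeps above $-\epsilon$, so $p$ does not drop below $\delta$ there either. In all cases $p\ge\delta=\Min{a_1+a_2+1,\ a_1+a_2+b_1+c+1}$, so using $\abs{1+i\ell_1}\le\abs{1+iL}$ each piece is $\le\abs{1+i\ell_1}^{\delta}\abs{1+iL}^{p+q-\delta}\ll\abs{1+i\ell_1}^{\delta}\abs{1+iL}^{\kappa-a_3-\delta+\epsilon}$. Summing the finitely many pieces and restoring the factor $\abs{1+iL}^{a_3}$ from the first step gives the stated bound after renaming $\epsilon$, with $\abs{1+iL}\asymp\abs{1+i(v_3-v_2)}$ used to split the $L$-power symmetrically as in the statement.
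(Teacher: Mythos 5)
Your plan is essentially the same as the paper's argument: both reduce to the two one-variable facts from the proof of \lemref{lem:PairConvLemma}, both use $\abs{1+i(v_3-v_1)}\asymp\abs{1+i(v_3-v_2)}$ to dispose of the symmetric split of the $L$-power, both treat $L<1$ and $\ell_1<1$ separately, and both handle the main case by cutting the domain at the midpoints and resolving the resulting pieces one variable at a time. The paper cuts at $u_2 = \tfrac{v_2+v_3}{2}$ and $u_1 = \tfrac{v_1+v_2}{2}$, substitutes $u_2\mapsto u_2+v_3$, and does only the piece $I_1$ (with $u_2$ large) in detail; your recentering at $v_2$, pulling out $\abs{1+i(u_1-v_3)}^{a_3}\asymp\abs{1+iL}^{a_3}$ first, and cutting the corner square along $y=x$ and $y=\ell_1$ is a cleaner packaging of the same idea, so this is not a genuinely different route.

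However, two of the concrete claims in your sketch are not correct as stated, and they are exactly where the ``bookkeeping'' you defer becomes nontrivial. First, the decomposition you describe does not resolve all factors in the strip $\set{x>\ell_1/2,\ y\le\ell_2/2}$ when $\ell_1 < \ell_2/2$: there $\abs{1+i(\ell_1+y)}$ and $\abs{1+i(x+y)}$ are each $\asymp\abs{1+i\ell_1}$ for $y\ll\ell_1$ but $\asymp\abs{1+iy}$ for $y\gg\ell_1$, so your claim that on every piece there is at most one moving factor in $y$ fails there unless you also cut at $y=\ell_1$ in that strip, not just inside the corner square. Second, and more importantly, the reason you give for $p\ge\delta$ on pieces with $y\gg\ell_1$ (``contribute nothing further to $\ell_1$'') is false in the mid-range $\ell_1<y\le\ell_2/2$ when the effective $y$-exponent $b_1+b_2+c$ is below $-1$: there the second elementary fact gives $\int_{\ell_1}^{\ell_2/2}\abs{1+iy}^{b_1+b_2+c}\,dy\ll\abs{1+i\ell_1}^{b_1+b_2+c+1}$, a \emph{negative} power of $\abs{1+i\ell_1}$, so the $y$-integration does reduce the $\ell_1$-exponent below $a_1+a_2+1$. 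The inequality $p\ge\delta-O(\epsilon)$ does still hold in that case — one checks it by distinguishing $a_2\ge -1$ from $a_2<-1$ and using $b_2>-1-\epsilon$ together with the definition of $\delta$ — but your argument as written does not cover it. Both issues are local and fixable within your framework, but as given the verification of the key inequality $p\ge\delta$ has a real hole in the mid-range pieces.
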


Applying this to the $G^*$ function, we have
\begin{align*}
	\kappa =& \mathfrak{u}_1+\mathfrak{u}_2 - \frac{1}{2}, \\
	\delta =& \Min{\frac{\kappa}{3}, \mathfrak{u}_1+\frac{\Re(\mu_3)}{2}, \frac{\mathfrak{u}_1-\Re(\mu_2)}{2}},
\end{align*}
and the conditions become
\begin{align*}
	& a_i, b_i > -1-\epsilon \Leftrightarrow \mathfrak{u}_1-\Re(\mu_i), \mathfrak{u}_2+\Re(\mu_i) > -1-2\epsilon, \\
	& a_2+b_2+c = -\frac{1}{2} > -2-\epsilon.
\end{align*}
Permuting $\mu$ and interchanging $(\mathfrak{u}_1, \mu) \leftrightarrow (\mathfrak{u}_2,-\mu)$ as necessary, we obtain \lemref{lem:GStarIntegral} part \textit{d}.
We do not give the proof that the integral over the region of exponential decay satisfies the same bound, but if one conditions on $v_2-v_1$ smaller or larger than
\[ \delta = 10\paren{\abs{a_1}+\abs{a_2}+\abs{a_3}+\abs{b_1}+\abs{b_2}+\abs{b_3}+\abs{c}+3}\log \abs{1+i(v_3-v_1)}, \]
the proof in the first case is essentially identical to the case $v_2-v_1 < 1 \le v_3-v_2$ below, and in the second case we need only extend the region of integration by $\delta \ll \abs{1+i(v_3-v_1)}^\epsilon$.

\begin{proof}[Proof of \lemref{lem:ElementaryGIntegral}]
We will repeatedly use the fact $v_3-v_1 =(v_3-v_2)+(v_2-v_1) \asymp v_3-v_2$.

If $v_3-v_2 < 1$, the result is obvious; if $v_2-v_1 < 1 \le v_3-v_2$, the integral reduces to
\begin{align*}
	&\abs{1+i(v_3-v_2)}^{a_3} \int_{v_2}^{v_3} \abs{1+i(u_2-v_2)}^{c+b_1+b_2} \abs{1+i(u_2-v_3)}^{b_3} du_2 \\
	&\ll \abs{1+i(v_3-v_2)}^{a_3 + \Max{c+b_1+b_2,b_3,c+b_1+b_2+b_3+1}} \\
	&= \abs{1+i(v_3-v_2)}^{a_3+\Max{b_3,c+b_1+b_2+b_3+1}+\epsilon},
\end{align*}
since $\delta \le \Min{a_1+a_2+1,a_1+a_2+b_1+b_2+c+2}$, we have
\[ \kappa-\delta \ge \Max{a_3+b_1+b_2+b_3+c+1,a_3+b_3}, \]
and the result follows.

Now assume $v_2-v_1 \ge 1$.
We split the integral into three parts: The first, call it $I_1$, with $u_2 > \frac{v_2+v_3}{2}$, the second with $u_1 < \frac{v_1+v_2}{2}$, and the third over the remaining region.
For the first integral, we send $u_2 \mapsto u_2 + v_3$, then
\begin{align*}
	& \int_{-\frac{v_3-v_2}{2}}^0 \abs{1+i(u_1-v_3-u_2)}^c \abs{1+i(u_2+v_3-v_1)}^{b_1} \\
	& \qquad \abs{1+i(u_2+v_3-v_2)}^{b_2} \abs{1+i u_2}^{b_3} du_2 \\
	& \asymp \abs{1+i(u_1-v_3)}^c \abs{1+i(v_3-v_1)}^{b_1} \abs{1+i(v_3-v_2)}^{b_2} \int_{-\frac{v_3-v_2}{2}}^0 \abs{1+i u_2}^{b_3} du_2 \\
	& \ll \abs{1+i(u_1-v_3)}^c \abs{1+i(v_3-v_1)}^{b_1} \abs{1+i(v_3-v_2)}^{b_2+\Max{0,b_3+1}},
\end{align*}
since $v_3-u_1 \ge v_3-v_2 > v_2-v_1$.
Then
\begin{align*}
	I_1 &\ll \abs{1+i(v_3-v_1)}^{b_1} \abs{1+i(v_3-v_2)}^{b_2+\Max{0,b_3+1}} \\
	& \qquad \int_{v_1}^{v_2} \abs{1+i(u_1-v_1)}^{a_1} \abs{1+i(u_1-v_2)}^{a_2} \abs{1+i(u_1-v_3)}^{a_3+c} du_1,
\end{align*}
and splitting this integral at $\frac{v_1+v_2}{2}$, we obtain
\begin{align*}
	& \int_0^{\frac{v_2-v_1}{2}} \abs{1+i u_1}^{a_1} \abs{1+i(u_1+v_1-v_2)}^{a_2} \abs{1+i(u_1+v_1-v_3)}^{a_3+c} du_1 \\
	& \ll \abs{1+i(v_3-v_1)}^{a_3+c} \abs{1+i(v_1-v_2)}^{a_2+\Max{0,a_1+1}},
\end{align*}
and similarly for
\begin{align*}
	& \int_{-\frac{v_2-v_1}{2}}^0 \abs{1+i(u_1+v_2-v_1)}^{a_1} \abs{1+i u_1}^{a_2} \abs{1+i(u_1+v_2-v_3)}^{a_3+c} du_1 \\
	& \ll \abs{1+i(v_2-v_1)}^{a_1+\Max{0,a_2+1}} \abs{1+i(v_2-v_3)}^{a_3+c}
\end{align*}
giving
\begin{align*}
	I_1 &\ll \abs{1+i(v_3-v_2)}^{a_3+b_1+b_2+b_3+c+1+\epsilon} \abs{1+i(v_1-v_2)}^{a_1+a_2+1+\epsilon},
\end{align*}
using $v_3-v_1 \asymp v_3-v_2$ and the hypotheses on $a$ and $b$.
The remaining terms are similar.

Then $\delta$ is the minimum exponent of $\abs{1+i(v_2-v_1)}$ and $\kappa$ is the sum of the two exponents, which is the same in every case; we split the remaining $\kappa-\delta$ evenly between $\abs{v_3-v_2} \asymp \abs{v_3-v_1}$.
\end{proof}

\section{Acknowledgements}
The author would like to thank Prof. W. Duke for his guidance along the way, Prof. X. Li for a copy of her ``Kloostermania'' notes and accompanying advice in addition to some timely warnings, Prof. V. Blomer for the Kim-Sarnak result and other helpful comments, and Prof. P. Sarnak for bringing the Goldfeld-Sarnak result to his attention and some history on the Kloostermania.

\bibliography {bibliography} {}
\bibliographystyle{plain}

\end{document}